\documentclass[a4paper,12pt]{article}
\usepackage{mathrsfs}
\usepackage{}
\usepackage[top=2.5cm,bottom=2.5cm,left=2.5cm,right=2.5cm]{geometry}
\usepackage{amssymb}
\usepackage{graphicx}
\usepackage{amsmath,amsthm,amssymb,lineno}
\setcounter{MaxMatrixCols}{16}
\usepackage{latexsym}
\usepackage{epstopdf}
\usepackage{setspace}
\usepackage{graphicx,booktabs,multirow}
\usepackage{latexsym, tabularx,shapepar}
\usepackage[all,2cell,dvips]{xy} \UseAllTwocells \SilentMatrices
\usepackage{appendix}
\usepackage{longtable}
\usepackage{cite}
\usepackage{CJK}
\usepackage{float}
\usepackage{indentfirst}
\usepackage{array}
\usepackage{amsmath}
\numberwithin{equation}{section}
\allowdisplaybreaks[4]
\graphicspath{{figures/}}

\newtheorem{theorem}{Theorem}[section]
\newtheorem{lemma}[theorem]{Lemma}
\newtheorem{remark}[theorem]{Remark}
\newtheorem{corollary}[theorem]{\rm\bfseries Corollary}
\newtheorem{prop}[theorem]{Definition}
\begin{document}
\begin{CJK*}{GBK}{song}
%\linenumbers

\title{On signed graphs whose spectral radius
does not exceed $\sqrt{2+\sqrt{5}}$}
 \author{  Dijian Wang$^{a}$, Wenkuan Dong$^{a}$,   Yaoping Hou$^{b}$\thanks{Corresponding author: yphou@hunnu.edu.cn},  Deqiong Li$^{c}$ \\
 \small  $^{a}$School of Science, Zhejiang University of Science and Technology, \\
\small Hangzhou, Zhejiang, 310023, P. R. China\\
 \small $^{b}$College of Mathematics and Statistics, Hunan Normal University,\\
\small Changsha,  Hunan, 410081, P. R. China\\
\small $^c$School  of Mathematics and Statistics, Hunan  University of  Technology and Business, \\
\small  Changsha, Hunan, 410205, P. R. China\\}
\date{}
\maketitle
\begin{abstract}

The Hoffman program with respect to any real or complex square matrix $M$ associated to a graph $G$ stems from  Hoffman's pioneering work on the limit points for the spectral radius of adjacency matrices of graphs does not exceed $\sqrt{2+\sqrt{5}}$. A signed graph  $\dot{G}=(G,\sigma)$ is a pair $(G,\sigma),$ where $G=(V(G),E(G))$ is a simple graph and
 $\sigma: E(G)\rightarrow  \{+1,-1\}$ is the sign function.
In this paper, we study the Hoffman program of signed graphs. Here, all signed graphs whose spectral radius
does not exceed $\sqrt{2+\sqrt{5}}$ will be identified.

\noindent
\textbf{AMS classification}: 05C50

\noindent
{\bf Keywords}: Signed graphs;  Spectral radius; Hoffman program.
\end{abstract}

\baselineskip=0.30in

\section {Introduction}
All graphs considered here are simple, undirected and finite. For  a graph $G=(V(G),E(G)),$
let $A(G)$ denote the adjacency matrix of $G,$
the eigenvalues of $A(G)$ are all real and the spectral radius  of $G$  is equal to the  largest eigenvalue of $A(G)$.

The \emph{Hoffman program} is the identification of connected graphs whose spectral radius does not
exceed some special limit points established by  Hoffman \cite{H72}. The smallest limit point for
the spectral radius of $G$ is 2, that is, identifies all connected  graphs whose spectral radius does not exceed 2.
This problem has already been completely solved by Smith \cite{S70}. They are known as \emph{Smith graphs.}
After that,
the problem jumps to the next significant limit point, which is $\lambda^\ast: =\sqrt{2+\sqrt{5}}=\tau ^{\frac{1}{2}} + \tau^{-\frac{1}{2}}$ $(\approx2.05817),$ where $\tau$
is the golden mean. In \cite{C82}, Cvetkovi$\acute{c}$, Doob and Gutman determined the structures of graphs with spectral radius between 2 and  $\sqrt{2+\sqrt{5}}$.
Their description was completed  by Brouwer and Neumaier \cite{B89}. In 2020, Jiang and Polyanskii \cite{J20} studied the forbidden subgraphs characterization for $\mathcal{G}^\lambda$ (where $\mathcal{G}^\lambda$ denotes the family of connected graphs of spectral radius $\le  \lambda$) with
applications to estimate the maximum cardinality of equiangular lines in the $n$-dimensional Euclidean space $\mathbb{R}^n.$
For more on the results between Hoffman program and   equiangular lines,  see \cite{J20,L73,S70}.

\begin{figure}
\begin{center}
  \includegraphics[width=14cm,height=2.5cm]{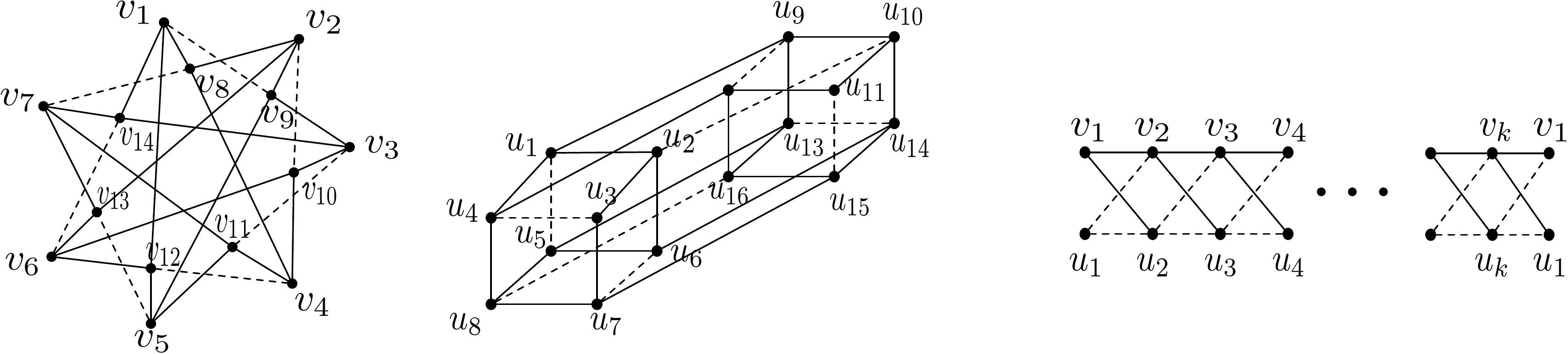}
  \end{center}
   \vskip -0.8cm\caption{The signed graphs $\dot{S}_{14},\dot{S}_{16}$ and $\dot{T}_{2k}$.}
  \label{figure4}
\end{figure}
 
A \emph{signed graph}  $\dot{G}=(G,\sigma)$ is a pair $(G,\sigma),$ where $G$ is a simple graph with $n$ vertices and $m$ edges,  called the \emph{underlying graph}, and
 $\sigma: E(G)\rightarrow  \{+1,-1\}$ is the sign function.
 An edge $v_iv_j$ is called \emph{positive} (\emph{negative}) if $\sigma(v_iv_j) = +1$ (resp. $\sigma(v_iv_j) = -1$) and denoted by $v_i\mathop{\sim}\limits^{+} v_j$ (resp. $v_i\mathop{\sim}\limits^{-} v_j$).
 An unsigned graph (or a graph) is a signed graph without negative edges.
 Given a signed graph $\dot{G}$, its \emph{adjacency matrix}   is defined by $A(\dot{G})=(\sigma_{ij}),$ where
$\sigma_{ij} =\sigma(v_iv_j)$ if $v_i \sim v_j,$ and $\sigma_{ij} = 0$ otherwise.  The eigenvalues, denoted by $\lambda_1(\dot{G})\ge \dots \ge\lambda_n(\dot{G}),$ of  $A(\dot{G})$ are defined as  the eigenvalues of  $\dot{G}$; they are all real since $A(\dot{G})$ is a real symmetric matrix. The spectral radius of $\dot{G}$ is defined by $\rho(\dot{G})=max\{|\lambda_i(\dot{G})|:1\le i\le n\}=max\{\lambda_1(\dot{G}),-\lambda_n(\dot{G})\}.$

The theory of limit points for the spectral radius of graph sequences studied by Hoffman  is still valid in the context of signed graphs. Some interesting results on the  Hoffman program of signed graphs can be found in \cite{B18,W21,G15,J21}.  In \cite{J21}, Jiang and Polyanskii studied the forbidden subgraphs characterization  for families of signed graphs with eigenvalues bounded from below with
applications to determine the maximum cardinality of a spherical two-distance set with two fixed angles in high dimensions.
The smallest limit point for the spectral radius of   $\dot{G}$ is also 2, see \cite[Propostion 6.1]{W21}.
Those signed graphs have been  identified by  McKee and Smyth   \cite{MS07}. 
Therefore, the natural next step is the next limit point: $\lambda^\ast=\sqrt{2+\sqrt{5}},$ that is, identifies all connected signed graphs whose spectral radius does not exceed  $\lambda^\ast$.
In this paper, we study the Hoffman program of signed graphs and  all signed graphs whose spectral radius does not exceed $\lambda^\ast$ will be identified.

In this paper,
positive edges are depicted as bold lines and negative edges are depicted as dashed lines.
  The rest of the paper is organized as follows. Our contribution is reported in Section $2.$
   In Section $3$, we give some preliminaries lemmas and theorems.  In Section $4$, we  
   will prove the Theorem \ref{t2.4}.

\section{Main results}

Let $T_{a,b,c}$ be the graph with $a + b + c + 1$ vertices consisting of three paths with $a, b,$ and $c$ edges, respectively, where these paths have one end vertex in common, and
let $Q_{a,b,c}$ be the graph with $a + b + c + 3$ vertices consisting of a path with $a+b+c$ edges 
and two extra edges affixed at $v_0$ and $w_0.$  
See Fig. \ref{2-1}.

For any fixed $\rho> 0,$ the symbol  $\mathcal{G}^\rho$ (resp. $\mathcal{G}_S^\rho$)  denotes the set of the
connected (resp. signed) graphs whose spectral radius does not exceed $\rho.$
\begin{figure}
\begin{center}
  \includegraphics[width=13.5cm,height=2cm]{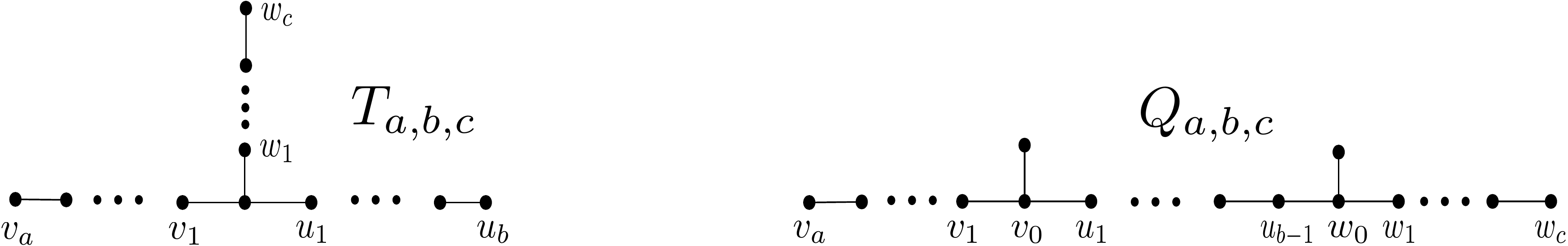}
  \end{center}
   \vskip -0.8cm\caption{ The  graphs  $T_{a,b,c}$ and $Q_{a,b,c}$.}
  \label{2-1}
\end{figure}
\begin{theorem}\cite{S70,B89,C82}\label{thm1}
Let the graphs $T_{a,b,c}$ and $Q_{a,b,c}$ be depicted in Fig. \ref{2-1}. Then 

$(i)$  $\mathcal{G}^2=\{C_n,K_{1,4},T_{2,2,2},T_{1,3,3},P_n,T_{1,1,n-1},T_{1,2,2},T_{1,2,3},T_{1,2,4},Q_{1,n-5,1}\}.$

$(ii)$  $\mathcal{G}^{\lambda^\ast}=\mathcal{G}^2\cup \{T_{a,b,c}\vert a=1,b=2,c>5; or~ a=1,b>2,c>3; or ~a=b=2,c>2; or~ a=2,b=c=3\}\cup\{Q_{a,b,c}\vert (a,b,c)\in \mathcal{S}; or~ c\ge a>0,b\ge b^\ast(a,c),(a,c)\ne(1,1)\}$ where
$$\mathcal{S}=\{(1,1,2),(2,4,2),(2,5,3),(3,7,3),(3,8,4)\}$$ and
$$b^\ast(a,c)=
\begin{cases}
a+c+2& \text{for $a>2$};\\
c+3& \text{for $a=2$};\\
c& \text{for $a=1$}.
\end{cases}$$
\end{theorem}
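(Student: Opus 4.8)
The plan is to view $\mathcal{G}^{\lambda^\ast}$ through two monotonicity principles and then to locate the boundary cases by explicit computation. The first principle is Perron--Frobenius: for a connected graph $\rho$ strictly decreases when one passes to a proper connected subgraph, so membership in $\mathcal{G}^{\lambda^\ast}$ is inherited by connected subgraphs and the whole question reduces to deciding exactly when enlarging a graph pushes $\rho$ beyond $\lambda^\ast$. The second is the Hoffman--Smith subdivision lemma: subdividing an edge lying on an \emph{internal path} (all interior vertices of degree $2$, both ends of degree $\ge 3$) strictly decreases $\rho$ when $\rho>2$ and strictly increases it when $\rho<2$, whereas lengthening a \emph{pendant path} always increases $\rho$ monotonically to a finite limit. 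Combined with the elementary bound $\rho(G)\ge\sqrt{\Delta(G)}$ coming from the embedded star $K_{1,\Delta}$, these tools collapse the infinite families $T_{a,b,c}$ and $Q_{a,b,c}$ into monotone one-parameter problems.

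For part $(i)$ I would reprove Smith's classification. The degree bound gives $\Delta\le 4$, with $\Delta=4$ occurring (for $\rho\le 2$) only for $K_{1,4}$, where $\rho=2$. The connected graphs with $\rho=2$ are precisely those admitting a positive integer eigenvector for the eigenvalue $2$, i.e.\ the extended Dynkin diagrams $C_n$, $K_{1,4}$, $Q_{1,n-5,1}$, $T_{2,2,2}$, $T_{1,3,3}$ and $T_{1,2,5}$; exhibiting each such eigenvector is immediate, and strict subgraph monotonicity then forces the graphs with $\rho<2$ to be their proper connected subgraphs $P_n$, $T_{1,1,n-1}$, $T_{1,2,2}$, $T_{1,2,3}$ and $T_{1,2,4}$. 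This produces the list in $(i)$.

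For part $(ii)$ the first step is to show that every member of $\mathcal{G}^{\lambda^\ast}\setminus\mathcal{G}^2$ is a tree with at most two vertices of degree $3$. Since $(\lambda^\ast)^2=2+\sqrt5<5$ we still have $\Delta\le 4$, and a degree-$4$ vertex has a limiting configuration (one arm stretched to a ray, the other three kept as pendant edges) whose eigenvalue equation $t^{-1}+3\rho^{-1}=\rho$ (with $\rho=t+t^{-1}$) gives $\rho^2=9/2>(\lambda^\ast)^2$; hence only finitely many small degree-$4$ graphs can survive, and direct checking leaves only $K_{1,4}$. To exclude cycles I use the crucial computation that the limiting ``spider'' --- a degree-$3$ vertex carrying two infinite rays and one pendant edge --- has spectral radius exactly $\lambda^\ast$: the eigenvalue equation at the center, $2t^{-1}+\rho^{-1}=\rho$, reduces to $t^4-t^2-1=0$, so $t^2=\tau$ and $\rho=\tau^{1/2}+\tau^{-1/2}=\lambda^\ast$. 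Because any connected graph properly containing a cycle contains a cycle with a pendant edge (or a theta-subgraph, whose analogous limit is $\sqrt{9/2}$), and because $\rho$ of such a configuration strictly decreases to its limit as the cycle grows, every such graph has $\rho>\lambda^\ast$; thus the members of $\mathcal{G}^{\lambda^\ast}\setminus\mathcal{G}^2$ are trees. A similar limiting argument shows that a third branch vertex forces some branch vertex to face two internal (hence ray-limit) directions together with an extra branch, so the same computation yields $\rho>\lambda^\ast$, leaving exactly the one-branch trees $T_{a,b,c}$ and the two-branch trees $Q_{a,b,c}$.

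The final step, which I expect to be the genuine obstacle, is to determine exactly which parameters keep $\rho\le\lambda^\ast$. In $T_{a,b,c}$ all three arms are pendant paths, so $\rho$ is strictly increasing in each of $a,b,c$ and converges as any arm grows; comparing the relevant limits with $\lambda^\ast$ yields the admissible ranges $a=1,b=2,c>5$, $a=1,b>2,c>3$, $a=b=2,c>2$ and $a=2,b=c=3$, while $a\ge 3$ or $a=2,b\ge3,c\ge4$ overshoot. In $Q_{a,b,c}$ the middle path of length $b$ is internal, so (as $\rho>2$) $\rho$ strictly decreases in $b$ while increasing in the end-arms $a,c$; this monotonicity yields for each $(a,c)$ a cutoff $b^\ast(a,c)$ above which $\rho\le\lambda^\ast$, together with the sporadic small instances collected in $\mathcal{S}$. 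The difficulty is that the spectral radius of these caterpillars is a root of a three-term recurrence (transfer-matrix) determinant, so one must set up the characteristic recursions for the arms and the internal path, isolate $\lambda^\ast$ as the relevant limiting root, and then resolve the finitely many boundary parameters where $\rho$ sits within an exponentially small gap of $\lambda^\ast$ --- a comparison that has to be made with exact golden-ratio algebra rather than numerically. It is precisely this delicate boundary analysis that produces the piecewise formula for $b^\ast(a,c)$ and the exceptional set $\mathcal{S}$.
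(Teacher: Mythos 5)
The paper does not prove this theorem: it is quoted from Smith \cite{S70}, Cvetkovi\'c--Doob--Gutman \cite{C82} and Brouwer--Neumaier \cite{B89}, so there is no internal argument to compare yours against. Judged on its own terms, your outline follows the classical route and the computations you do display are correct: subgraph monotonicity of $\rho$, the Hoffman--Smith subdivision lemma, the limit $\lambda^\ast$ for the spider with two rays and one pendant edge (via $t^4-t^2-1=0$, $t^2=\tau$), the limit $3/\sqrt{2}$ governing degree-$4$ vertices and theta subgraphs, and the resulting reduction of $\mathcal{G}^{\lambda^\ast}\setminus\mathcal{G}^2$ to the trees $T_{a,b,c}$ and $Q_{a,b,c}$. (Incidentally, your derivation of part $(i)$ correctly produces $T_{1,2,5}=\widetilde{E}_8$ among the connected graphs with $\rho=2$, which is absent from the list as printed in the theorem.)

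The gap is that your proof stops exactly where the theorem's content begins. Everything that distinguishes part $(ii)$ from a qualitative structure theorem --- the piecewise formula for $b^\ast(a,c)$, the sporadic set $\mathcal{S}$, and the boundary verdicts such as $\rho(T_{2,3,3})\le\lambda^\ast$ versus $\rho(T_{2,3,4})>\lambda^\ast$ --- is described as a computation to be done, not carried out. Monotonicity and limit comparison cannot by themselves settle these cases: the limits of $\rho(T_{2,2,c})$ as $c\to\infty$ and of $\rho(Q_{a,b,c})$ as $b\to\infty$ sit exactly at or strictly below $\lambda^\ast$, so deciding whether a given finite parameter choice lands inside $(2,\lambda^\ast]$ requires evaluating the characteristic polynomials at $\lambda^\ast$ in exact golden-ratio arithmetic (precisely the kind of work the paper performs for its own families in Appendix A via Eqs. (\ref{e4.2}) and (\ref{e4.3})), together with the two threshold arguments in $b$ that yield $b^\ast(a,c)$ and isolate the five exceptional triples. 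Until those finitely many closed-form evaluations are written out, what you have is a correct reduction of the problem to a finite and tractable verification, not a proof of the classification as stated.
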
\label{thm2.2}
\begin{theorem}\cite{MS07}
Signed graphs  in $\mathcal{G}_S^{2}$ are the induced subgraphs of $\dot{S}_{14},$
$\dot{S}_{16}$ or $\dot{T}_{2k}.$
\end{theorem}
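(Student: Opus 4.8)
The plan is to treat this as the classification of connected signed graphs whose adjacency matrix has all eigenvalues in the interval $[-2,2]$, since the definition $\rho(\dot{G})=\max\{\lambda_1(\dot{G}),-\lambda_n(\dot{G})\}\le 2$ is exactly the requirement that the spectrum of $A(\dot{G})$ be contained in $[-2,2]$. Two reductions come first. By the Cauchy interlacing theorem, the eigenvalues of any principal submatrix of $A(\dot{G})$ — that is, the adjacency matrix of any induced subgraph — interlace those of $A(\dot{G})$, so $\mu_1\le\lambda_1(\dot{G})\le 2$ and $\mu_{\min}\ge\lambda_n(\dot{G})\ge -2$; hence $\rho\le 2$ is inherited by every induced subgraph and $\mathcal{G}_S^2$ is closed under taking induced subgraphs. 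It therefore suffices to identify the maximal connected members, which is what the statement does by naming $\dot{S}_{14},\dot{S}_{16},\dot{T}_{2k}$. Secondly, switching $\dot{G}$ (conjugating $A(\dot{G})$ by a $\pm1$ diagonal matrix, equivalently flipping all edge signs at a chosen vertex set) preserves the spectrum, so the entire analysis may be performed up to switching equivalence.

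The second step is to recast the spectral bound as a pair of positive-semidefiniteness conditions: $\rho(\dot{G})\le 2$ holds if and only if both $2I+A(\dot{G})\succeq 0$ and $2I-A(\dot{G})\succeq 0$. This is the essential feature separating the signed case from Smith's unsigned theorem, where for a connected graph one automatically has $\lambda_1\ge-\lambda_n$ so that only the single bound $\lambda_1\le 2$ is active; here the two integer symmetric matrices $2I\pm A(\dot{G})$, each carrying $2$ on the diagonal and entries in $\{0,\pm1\}$ off it, must be positive semidefinite simultaneously. I would read these as signed generalized Cartan-type matrices: a graph with $\rho<2$ makes both forms positive definite (``finite type''), while the maximal graphs we seek sit at the boundary where at least one form is singular.

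The heart of the argument is then a saturation analysis. A connected signed graph with $\rho(\dot{G})<2$ has both $2I\pm A(\dot{G})$ positive definite, and I would show that every such finite-type signed graph embeds as a \emph{proper} induced subgraph of a strictly larger member of $\mathcal{G}_S^2$, so no finite-type graph can be maximal. Consequently each maximal connected member satisfies $\rho(\dot{G})=2$ exactly, whence one of $2I\pm A(\dot{G})$ is singular and admits a nonzero kernel vector $x$. I would exploit the local linear relations forced by $A(\dot{G})x=\pm 2x$ — the signed analogue of the Perron weights that pin down the Smith graphs — to bound the vertex degrees and to fix the product of signs around each cycle, these cycle-sign (balance) data being precisely the switching invariants that remain after the reduction above. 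This local rigidity forces the global shape to be either a signed even cycle or tessellation, yielding the infinite family $\dot{T}_{2k}$, or one of finitely many sporadic configurations; direct computation of the spectra then confirms that the only sporadic maximal graphs are $\dot{S}_{14}$ and $\dot{S}_{16}$.

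I expect the main obstacle to lie in this final combinatorial bookkeeping: controlling how a signed graph with $\rho=2$ can be enlarged while keeping \emph{both} quadratic forms nonnegative, and, above all, ruling out any further sporadic maximal examples beyond $\dot{S}_{14}$ and $\dot{S}_{16}$. The small forbidden configurations — minimal signed graphs with $\rho>2$ — that cut off the growth have to be produced and checked individually, and the balance and switching constraints on the signs around cycles must be tracked consistently throughout, so that the unbounded branch collapses to exactly the single family $\dot{T}_{2k}$.
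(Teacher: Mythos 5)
This statement is not proved in the paper at all: it is Theorem~2.2, imported verbatim from McKee and Smyth \cite{MS07}, and the authors use it as a black box (e.g.\ when they write $\dot{H}\subset \dot{S}_{14}$, $\dot{S}_{16}$ or $\dot{T}_{2k}$ for the subgraphs $\dot H$ with $\rho(\dot H)\le 2$ in Sections~3 and~4). So there is no in-paper argument to compare yours against; what can be assessed is whether your sketch would stand on its own as a proof of the McKee--Smyth classification. The preliminary reductions you make are correct and are indeed the standard ones: interlacing gives closure of $\mathcal{G}_S^2$ under induced subgraphs, switching (conjugation by a $\pm1$ diagonal matrix) preserves the spectrum, and $\rho(\dot G)\le 2$ is equivalent to $2I\pm A(\dot G)\succeq 0$. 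Up to that point you are on solid ground.

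The two central steps, however, are placeholders rather than arguments, and the second one would fail as stated. First, the claim that every connected signed graph with $\rho<2$ embeds properly into a strictly larger member of $\mathcal{G}_S^2$, so that all maximal graphs satisfy $\rho=2$, is essentially a consequence of the classification you are trying to prove; you give no independent mechanism for producing the extension, so the saturation step is circular as written. Second, and more seriously, there is no ``signed analogue of the Perron weights.'' Smith's argument for unsigned graphs leans entirely on the Perron--Frobenius theorem: the eigenvector for the eigenvalue $2$ is strictly positive, and the local relations $2x_v=\sum_{u\sim v}x_u$ with positive $x_u$ immediately bound degrees and branch lengths. For a signed graph the kernel vector of $2I\pm A(\dot G)$ has entries of both signs (and can vanish at vertices), so the relation $2x_v=\sum_u \sigma_{uv}x_u$ forces nothing locally; this is precisely why McKee and Smyth do not argue this way, but instead represent $2I\pm A$ as Gram matrices of vectors in a lattice, bound the degrees via $\operatorname{tr}$-type estimates on $A^2$ (compare Lemma~\ref{l3.4} of this paper), and then carry out an extensive, partly computer-assisted excluded-subgraph search to pin down $\dot S_{14}$, $\dot S_{16}$ and the family $\dot T_{2k}$. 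Your final sentence concedes that this ``combinatorial bookkeeping'' is the main obstacle, but it is in fact the entire content of the theorem: without producing the finite list of minimal non-cyclotomic configurations and verifying that the growth process terminates, the proposal does not constitute a proof.
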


  \begin{figure}
\begin{center}
  \includegraphics[width=12cm,height=1.8cm]{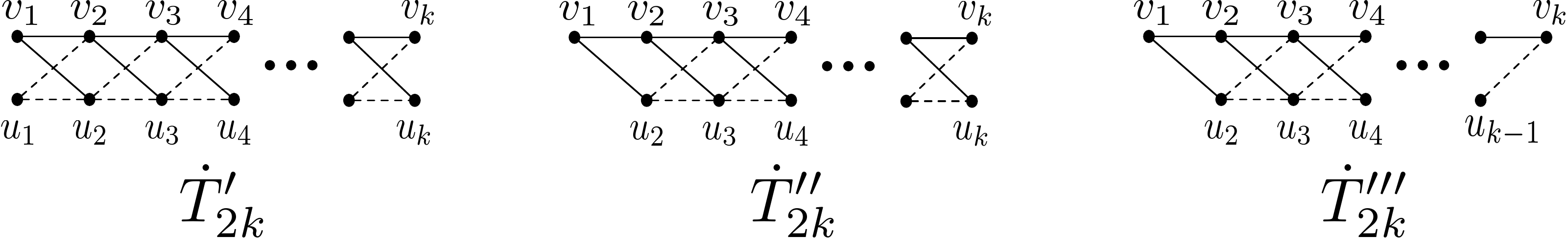}
  \end{center}
   \vskip -0.7cm\caption{The signed graphs $\dot{T}_{2k}^{\prime},$ $\dot{T}_{2k}^{\prime\prime}$ and $\dot{T}_{2k}^{\prime\prime\prime}$.}
  \label{4-10-1}
\end{figure}
\begin{figure}
\begin{center}
  \includegraphics[width=15cm,height=2.5cm]{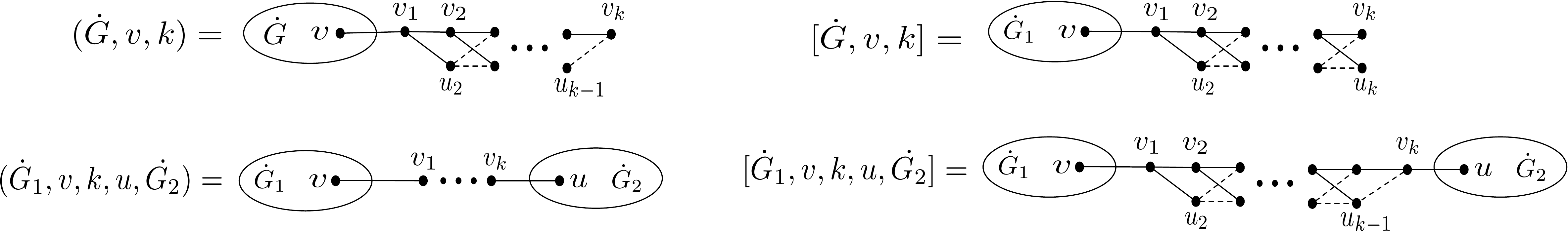}
  \end{center}
   \vskip -0.7cm\caption{The signed graphs in Definition \ref{d2.3}.}
  \label{4-10}
\end{figure}

\begin{prop}\label{d2.3}
$(1)$ Let $\dot{G}$ be a connected signed graph, and let $v$ be a vertex in $\dot{G}.$

$\bullet$ Denote $(\dot{G},v,k)$ the signed graph obtained from $\dot{G}$ by appending a  $\dot{T}_{2k}^{\prime\prime\prime}$   at $v.$

$\bullet$  Denote $[\dot{G},v,k]$ the signed graph obtained from $\dot{G}$ by appending a  $\dot{T}_{2k}^{\prime\prime}$ at $v.$

$(2)$ Let $\dot{G}_1,$ $\dot{G}_2$ be two connected disjoint signed graphs, and let $v,$ $u$ be vertices in $\dot{G}_1,$ $\dot{G}_2$  respectively.

$\bullet$ Define $(\dot{G}_1, v, k, u, \dot{G}_2)$ to be the signed graph obtained from $\dot{G}_1$ and $\dot{G}_2$ by joining them with a path of $k$ vertices connecting $v$ and $u.$

$\bullet$ Define $[\dot{G}_1, v, k, u, \dot{G}_2]$ to be the signed graph obtained from $\dot{G}_1$ and $\dot{G}_2$ by joining them with a  $\dot{T}_{2k}^{\prime\prime\prime}$  connecting $v$ and $u.$

The above mentioned signed graphs are depicted in
 Figs. \ref{4-10-1} and \ref{4-10}.
\end{prop}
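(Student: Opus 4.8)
Definition~\ref{d2.3} is a \emph{definition}: it attaches names to four families of signed graphs assembled from one base graph $\dot{G}$ (part~(1)) or two base graphs $\dot{G}_1,\dot{G}_2$ (part~(2)) by gluing on the fixed gadgets $\dot{T}_{2k}^{\prime\prime}$, $\dot{T}_{2k}^{\prime\prime\prime}$ of Fig.~\ref{4-10-1} or a bare path. Accordingly it carries no logical assertion, and the only thing to establish is that each piece of notation names a single object. The plan is first to fix, once and for all, a distinguished ``attaching vertex'' of each gadget together with the sign of the edge that identifies it with $v$ (and, in part~(2), with $u$); with that convention in place, ``appending $\dot{T}_{2k}^{\prime\prime\prime}$ at $v$'' and ``joining $v$ and $u$ by $\dot{T}_{2k}^{\prime\prime\prime}$'' each determine $(\dot{G},v,k)$, $[\dot{G},v,k]$, $(\dot{G}_1,v,k,u,\dot{G}_2)$ and $[\dot{G}_1,v,k,u,\dot{G}_2]$ uniquely up to isomorphism, exactly as drawn in Figs.~\ref{4-10-1} and \ref{4-10}.

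Since the eigenvalues of a signed graph are invariants only of its \emph{switching class}, the substantive well-definedness point is that these constructions descend to switching classes. I would verify this directly: a switching of $\dot{G}$ at a vertex subset, extended by the identity on the glued gadget, is a switching of the whole construction, so the adjacency matrices of the two outputs are conjugate by the same signature matrix. Hence $\rho(\dot{G},v,k)$, $\rho[\dot{G},v,k]$ and their two-base analogues depend only on the switching classes of $\dot{G}$ (resp. $\dot{G}_1,\dot{G}_2$) and on $k$. This is precisely what makes the notation a legitimate input to the spectral classification carried out in Section~4.

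The genuine work that Definition~\ref{d2.3} sets up lies downstream, in Theorem~\ref{t2.4}: one must show that these four families, built on the small bases admitted by the $\mathcal{G}_S^{2}$ description \cite{MS07} together with Theorem~\ref{thm1}, are exactly the connected signed graphs with $\rho\le\lambda^{\ast}$. The route I would take is to compute $\phi(\dot{T}_{2k}^{\prime\prime},x)$ and $\phi(\dot{T}_{2k}^{\prime\prime\prime},x)$ by the usual deletion recursion along a pendant path, write the characteristic polynomial of each construction as a path-polynomial combination of $\phi(\dot{G},x)$ and $\phi(\dot{G}-v,x)$, and evaluate the sign of $\phi$ at $x=\lambda^{\ast}$ to pin $\rho$ against $\lambda^{\ast}$, using Cauchy interlacing for the converse bound. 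The hard part is not any single eigenvalue estimate but the combinatorial completeness: proving that \emph{every} other way of growing a base graph forces an induced subgraph of spectral radius exceeding $\lambda^{\ast}$, which is the forbidden-configuration case analysis deferred to Section~4.
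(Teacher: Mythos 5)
You have correctly observed that the statement labelled \ref{d2.3} is a definition (the environment \texttt{prop} is printed as ``Definition''), so the paper supplies no proof and none is required; your remarks on fixing the attaching vertices and on invariance under switching match how the paper implicitly uses this notation, and your sketch of the downstream spectral work agrees with what the paper actually does in Section~4 and Appendix~A (characteristic-polynomial recursions, evaluation at $\lambda^\ast$, and interlacing). Nothing further is needed.
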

 \begin{figure}
\begin{center}
  \includegraphics[width=13.5cm,height=7.5cm]{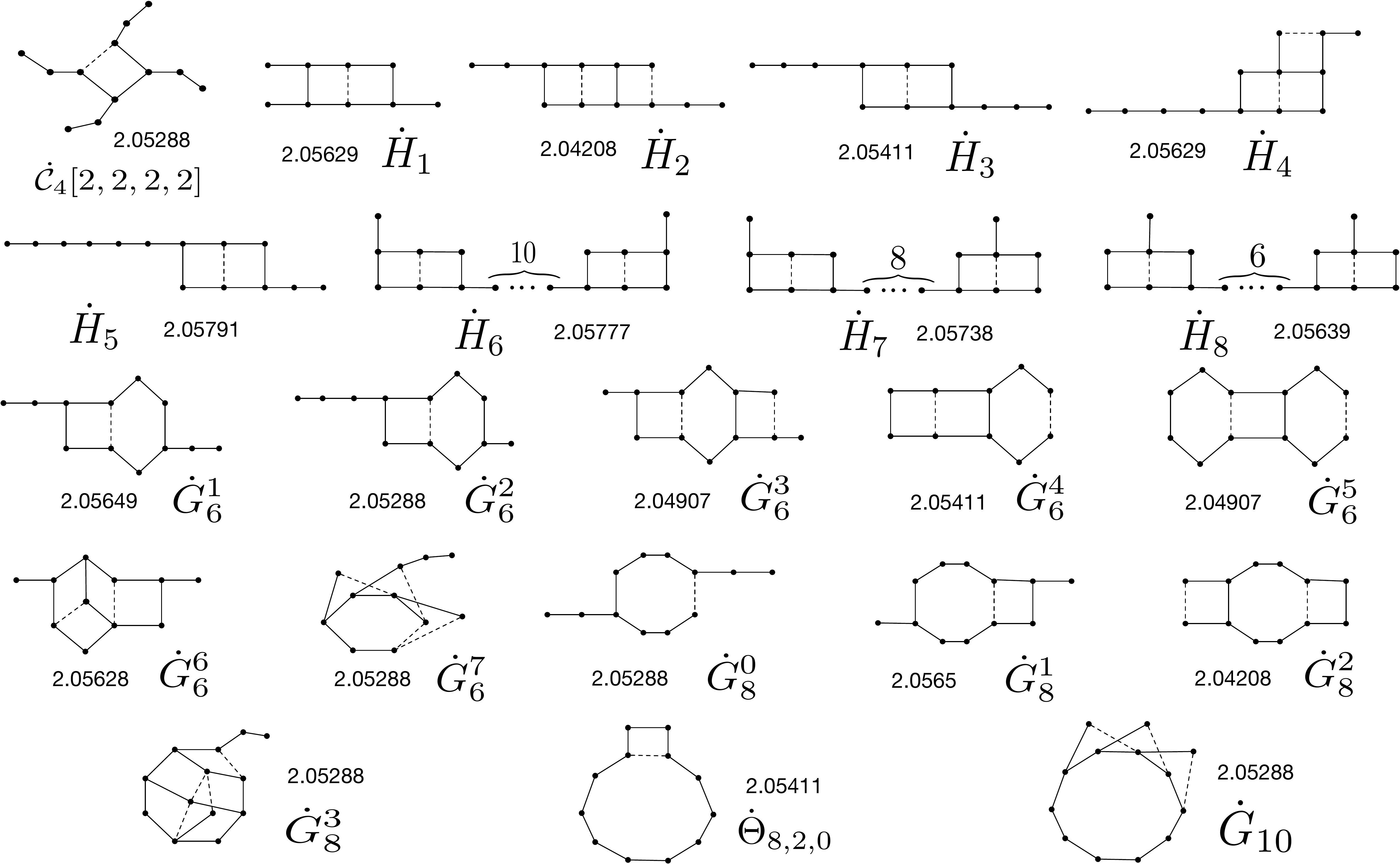}
  \end{center}
   \vskip -0.8cm\caption{The  signed graphs in Theorem \ref{t2.4} $(ii).$ The number denotes the spectral radius of the corresponding signed graph.}
  \label{main}
\end{figure}

The main results of this paper are presented as following.

\begin{theorem}\label{t2.4}
Signed graphs in $\mathcal{G}_S^{\lambda^\ast}$  are  the induced subgraphs (up to switching isomorphic) of

$(i)$  $\dot{S}_{14},\dot{S}_{16},\dot{T}_{2k},$

$(ii)$ $\dot{\mathcal{C}}_4[2,2,2,2],\dot{H}_1-\dot{H}_8,\dot{G}_6^1-\dot{G}_6^7,\dot{G}_8^0-\dot{G}_8^3, \dot{\Theta}_{8,2,0},\dot{G}_{10},$

$(iii)$ $\dot{\mathcal{C}}_k^{1,\frac{k}{2}+1}$  $(k$ is even$)$, $\dot{G}_{0}^k$ $(k\ge 12$ and $k$ is even$),$ 
$\dot{U}^{n_1,n_2}_6$ $(n_1\ge 1$ and $n_2\ge 1),$
$\dot{\mathcal{C}}_4[n_1,1,n_3,1]$ $(n_1\ge 1$ and $n_3\ge 1),$
  $\dot{S}_1^n$ $(n\ge 8),$ $\dot{S}_2^n$ $(n\ge 10),$

$(iv)$ 
 $[\dot{G},v,s,u,\dot{H}]$ $(s\ge 3)$, where  $(\dot{G},v),$ $(\dot{H},u)\in \{(\dot{G}_4^{12},v_{12}),$ $(\dot{G}_2^9,v_{9}),(T_{a,1,a-1},v_{a-1}),$ $(\dot{Q}^\prime_{n_1,n_1},v_{n_1}),(\dot{G}_5^{10},v_{10}),(P_4,v_2)\},$  $a\ge 3$ and $n_1\ge 1$,

$(v)$ 
  $(\dot{Q}^{\prime}_{1,0},v_{1},2,v_{2},P_{4}),$
$(\dot{Q}^{\prime}_{a_4,a_4},v_{a_4},2,v_{2},P_{4}),$
 $(\dot{Q}^{\prime}_{b,b},v_{b},2,v_{a_3-1},T_{a_3,1,a_3-1})$ $(b\ge a_3-1)$, 
 $(\dot{Q}^{\prime}_{1,0},v_{1},2,v_{a_3-1},T_{a_3,1,a_3-1})$,
 $(\dot{Q}_{a_1,a_1}^{\prime},v_{a_1},2,v_1,\dot{Q}^\prime_{1,0}),$
$(\dot{Q}_{a_3,a_3}^{\prime},v_{a_3},2,v_{a_2},\dot{Q}_{a_2,a_2}^{\prime}),$
$(\dot{G}_5^{10},v_{10},2,$ $v_1,\dot{Q}^\prime_{1,0}),$
$(\dot{G}_5^{10},v_{10},2,v_{a_4},\dot{Q}^\prime_{a_4,a_4})$, $(\dot{G}_2^{9},v_{9},2,v_2,P_4),$  $(\dot{G}_2^{9},v_{9},2,v_2,T_{3,1,2}),$
$(\dot{G}_2^{9},v_{9},2,v_3,T_{4,1,3})$,
$(\dot{G}_2^{9},v_{9},$ $2,v_1,\dot{Q}^\prime_{1,0}),$  $(\dot{G}_2^{9},v_{9},2,v_{a_2},\dot{Q}^\prime_{a_2,a_2}),$ 
 $(\dot{G}_3^{11},v_{11},2,v_2,T_{3,1,2}),$
$(\dot{G}_3^{11},v_{11},2,v_3,T_{4,1,3})$,
$(\dot{G}_3^{11},$ $v_{11},2,v_{a_2},\dot{Q}^\prime_{a_2,a_2})$,
$(\dot{G}_4^{12},v_{12},2,v_1,\dot{Q}^\prime_{1,0}),$ 
$(\dot{G}_4^{12},v_{12},2,v_{a_4},\dot{Q}^\prime_{a_4,a_4})$,
 where $a_i\ge i$ for $i=1,2,3,4$.

The above mentioned signed graphs are depicted in Figs. \ref{2-1}, \ref{main},  \ref{main-2} and \ref{main-2-1}. 
\end{theorem}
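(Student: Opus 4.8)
The plan is to prove the classification by first reducing to the regime $2<\rho(\dot{G})\le\lambda^\ast$, since every signed graph with $\rho(\dot{G})\le 2$ is already described by the McKee--Smyth classification \cite{MS07} and accounts for item $(i)$. Throughout I would work up to switching equivalence, which preserves the spectrum, and exploit the two monotonicity principles that drive every Hoffman-type argument: eigenvalue interlacing, so that $\rho(\dot{H})\le\rho(\dot{G})$ whenever $\dot{H}$ is an induced subgraph of $\dot{G}$, and the entrywise domination $|A(\dot{G})|\le A(G)$, which yields $\rho(\dot{G})\le\rho(G)$ for the underlying graph $G$. The first principle lets me assemble a list of forbidden induced subgraphs, namely minimal signed graphs with $\rho>\lambda^\ast$, giving necessary conditions; the second lets me transplant the unsigned classification of Theorem \ref{thm1} directly to any signed graph whose underlying graph already lies in $\mathcal{G}^{\lambda^\ast}$.

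First I would pin down the coarse structure. Since $K_{1,5}$ has $\rho=\sqrt 5>\lambda^\ast$ and every signing of a star is switching equivalent to the all-positive one, the maximum degree is at most $4$; similarly, a few short forbidden configurations bound the number of vertices of degree $\ge 3$ and restrict how cycles may attach to one another. I would then split according to the cyclomatic number of $G$. When $G$ is a tree all signings are switching trivial, so $\rho(\dot{G})=\rho(G)$ and the admissible graphs are exactly the trees of Theorem \ref{thm1} (the families $T_{a,b,c}$), feeding the tree items of the list. When $G$ contains cycles, the balance of each cycle becomes essential: an unbalanced even cycle satisfies $\rho(\dot{C}_n)=2\cos(\pi/n)<2$, strictly below the balanced value, so unbalanced cycles genuinely enlarge the family beyond the unsigned case and are responsible for the new signed graphs $\dot{\mathcal{C}}_k^{1,\frac{k}{2}+1}$, the $\dot{\mathcal{C}}_4[\cdots]$ graphs, $\dot{U}^{n_1,n_2}_6$, and the $\dot{Q}^{\prime}$-type fragments.

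The heart of the argument is the treatment of the infinite families in items $(iii)$--$(v)$, for which I would rely on the appending and joining operations of Definition \ref{d2.3}. The key preliminary facts, to be established in Section~3, are that attaching a $\dot{T}_{2k}^{\prime\prime}$ or $\dot{T}_{2k}^{\prime\prime\prime}$ tail at a vertex, or joining two admissible fragments through such a tail, keeps the spectral radius $\le\lambda^\ast$, whereas attaching anything larger exceeds it. With these in hand, the infinite families are generated by taking each admissible ``head'' fragment, among $\dot{G}_2^9,\dot{G}_3^{11},\dot{G}_4^{12},\dot{G}_5^{10},T_{a,1,a-1},\dot{Q}^{\prime}_{n_1,n_1},P_4$, and closing it under the permitted appendages and pairwise joinings. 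A characteristic-polynomial recursion along the appended path, of the type $\phi_{k}=x\,\phi_{k-1}-\phi_{k-2}$ corrected by the endpoint data, shows that $\rho$ increases monotonically to a limit not exceeding $\lambda^\ast$, so every finite member qualifies and the list is closed.

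The main obstacle will be completeness and minimality rather than any single eigenvalue estimate: one must verify that the finitely many head fragments and the five combinatorial joining patterns of item $(v)$ exhaust all ways an admissible signed graph with $2<\rho\le\lambda^\ast$ can be assembled, with no family omitted and no listed graph redundant. This is delicate precisely near the boundary, where the spectral radius sits just below $\lambda^\ast$ and crude interlacing bounds no longer separate the cases; there I would instead compute the relevant characteristic polynomials explicitly, comparing $\phi_{\dot{G}}(\lambda^\ast)$ against $0$ and using the sign of the derivative to decide membership. Organizing this large case analysis, first by cyclomatic number, then by the attachment vertices $v_i$ of the fragments, and finally by the cycle signatures, so that the bookkeeping remains manageable, is where the genuine effort lies.
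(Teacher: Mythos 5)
Your proposal assembles the right toolkit (interlacing, a finite list of forbidden subgraphs, reduction to the unbalanced case with $2<\rho\le\lambda^\ast$ via McKee--Smyth, and evaluation of characteristic polynomials at $\lambda^\ast$ for the infinite families), and several of your individual observations are correct, e.g.\ $\rho(\dot{G})\le\rho(G)$ by entrywise domination and $\Delta(\dot{G})\le 4$ from forbidding $K_{1,5}$. But as it stands this is a plan rather than a proof: the entire content of the theorem is the exhaustive enumeration that you defer with ``one must verify that the \dots\ head fragments and \dots\ joining patterns exhaust all ways an admissible signed graph can be assembled,'' and no mechanism for certifying that completeness is supplied. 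Moreover, your proposed primary split ``according to the cyclomatic number of $G$'' is not viable: the cyclomatic number of signed graphs in $\mathcal{G}_S^{\lambda^\ast}$ is unbounded (the $4$-regular graph $\dot{T}_{2k}$ already has cyclomatic number $2k+1$), so that case distinction does not terminate. The paper instead organizes the analysis by the length of the longest induced cycle ($k$ odd or $k\ge 10$, then $k=8$, $k=6$, and finally $\dot{C}_k$-free for $k\ne 4$), which is effective because long unbalanced cycles force very rigid local structure via the forbidden subgraphs $\dot{F}_1$--$\dot{F}_{11}$, $T_{2,3,4}$, $Q_{a,b,c}$, etc.

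The second missing idea is the scaffolding device that makes completeness checkable. In the paper, any $\dot{G}$ with $2<\rho(\dot{G})\le\lambda^\ast$ contains a maximal induced subgraph $\dot{H}$ with $\rho(\dot{H})\le 2$, which by McKee--Smyth must embed in $\dot{S}_{14}$, $\dot{S}_{16}$ or $\dot{T}_{2k}$; the classification then proceeds by enumerating how ``good vertices'' can be attached to such an $\dot{H}$, and the finitely many sporadic configurations are disposed of by an explicit vertex-by-vertex computer search (Algorithm~1) that also certifies maximality of the graphs in item $(ii)$. Your closure-under-appendages procedure has neither a guaranteed starting point nor a termination/exhaustiveness argument without something playing this role. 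Finally, the tail-attachment analysis you sketch is essentially the paper's Appendix~A computation $\phi_{[\dot{G},v,s]}(x)=(x\pm2)^{s-2}\phi_{[\dot{G},v,2]}(x)$, so that part of your outline is sound, but it only establishes sufficiency ($\rho<\lambda^\ast$ for the listed families), not the necessity half of the theorem.
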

\begin{remark}
$(1)$ In Appendix A, we will prove that each signed graph of Theorem \ref{t2.4} $(iii)$ and $(iv)$ has spectral radius $\rho(\dot{G})<\lambda^\ast$. See Lemma \ref{4.13}.

$(2)$ Each signed graph of  Theorem \ref{t2.4} $(v)$ belongs to   Lemma \ref{l3.5}.
\end{remark}
\begin{figure}
\begin{center}
  \includegraphics[width=14cm,height=4.5cm]{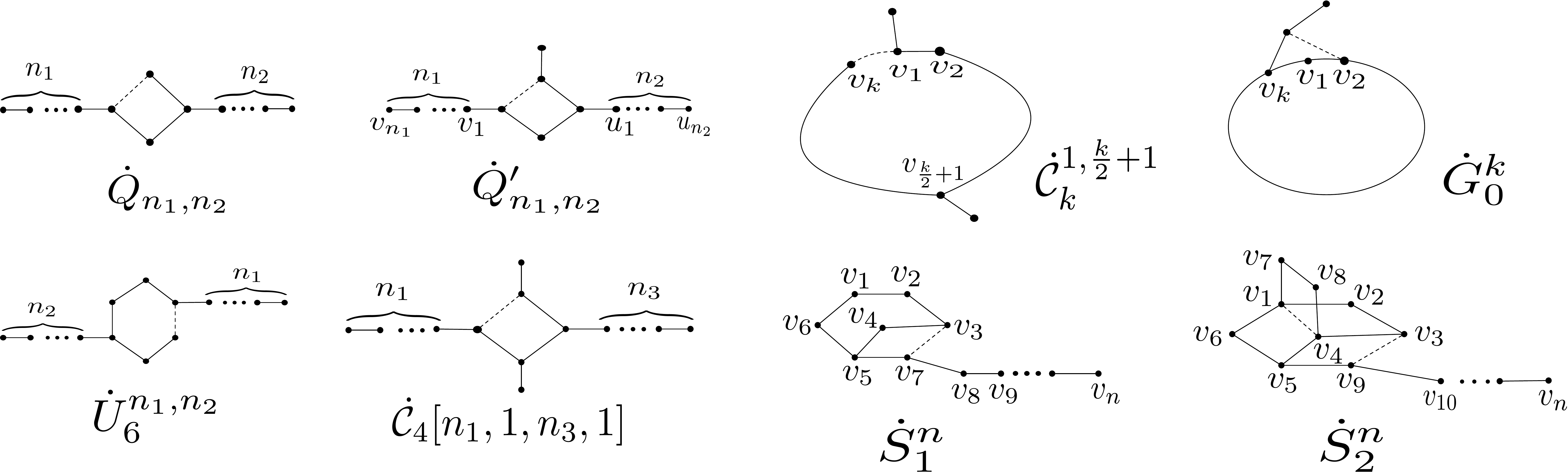}
  \end{center}
   \vskip -0.8cm\caption{The  signed graphs in Theorem \ref{t2.4}.}
  \label{main-2}
\end{figure}

 \section{Preliminaries}
 The sign of a cycle $\dot{C}$ of $\dot{G}$ is  $\sigma(\dot{C})=\prod_{e\in \dot{C}}\sigma (e)$, whose sign is $+1$ (resp. $-1$) is called \emph{positive} (resp. \emph{negative}) and denoted by $\dot{C}_n^+$ (resp. $\dot{C}_n^-$). A signed graph $\dot{G}$ is called \emph{balanced} if  all its cycles are positive; otherwise it is called \emph{unbalanced.}
 If there is a vertex subset $S\subseteq V(\dot{G})$, such that $\dot{G}^S$ is obtained by reversing the sign of every edge with one end in $S$ and the other in $V (\dot{G})\setminus S$, then we say that $\dot{G}$ and $\dot{G}^S$ are \emph{switching equivalent} and write $\dot{G}\sim \dot{G}^S$. If $\dot{G}$ is isomorphic to a signed graph switching equivalent to $\dot{G_1}$, we say $\dot{G}$ is \emph{switching isomorphic} to $\dot{G_1}$, denoted by $\dot{G}\simeq\dot{G_1}$. Switching isomorphic signed graphs share the same spectrum.  More basic results on the signed graphs, see \cite{Z82}.
\begin{figure}
\begin{center}
  \includegraphics[width=13.5cm,height=3cm]{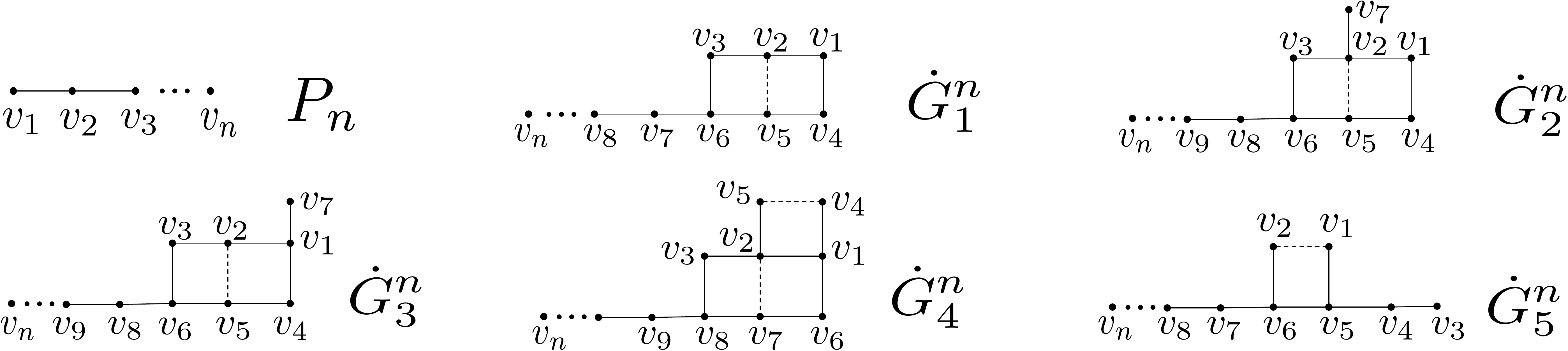}
  \end{center}
   \vskip -0.6cm\caption{The graph $P_n$ and the  signed graphs $\dot{G}_1^{n},$ $\dot{G}_2^{n},$ $\dot{G}_3^{n},$ $\dot{G}_4^{n}$ and $\dot{G}_5^{n}$.}
  \label{main-2-1}
\end{figure}

 As usual, $d_u$ is the degree of a vertex $u$ and $\Delta(\dot{G})=max_{u\in V(\dot{G})}d_u$.
Let $N_X(u)$  be the set of neighbours of a vertex  $u$  in  a set $X$ and $d_X(u)$ be the cardinality of $N_X(u)$.
For $U\subset V(\dot{G}),$
 $\dot{G}[U]$ denotes the induced subgraph of $\dot{G}$ respect to the $U.$
The signed graph  $\dot{C}_k^1$ (resp. $\dot{\mathcal{C}}_k^1$) is obtained by a balanced cycle $\dot{C}_k^+$ (resp. an unbalanced cycle $\dot{C}_k^-$)
    with  one pendent edge.
The signed graph $\dot{\mathcal{C}}_k^{i,j}$ ($i\ne j$) is obtained by an unbalanced
cycle $\dot{C}_k^-$  with  one pendent edge at vertex  $v_i$ (in $\dot{C}_k^-$) and  one pendent edge at vertex  $v_j$ (in $\dot{C}_k^-$).

A signed graph is called \emph{maximal} with respect to some property $\mathcal{P}$ if it is not a proper  induced subgraph of some other signed graph satisfying $\mathcal{P}.$
In this context, we call  a signed graph is   \emph{maximal} if it is not a proper  induced subgraph of some other signed graphs  whose spectral radius does not exceed $\lambda^\ast$.

Given a signed  graph $\dot{H}$ and $\rho(\dot{H})\le 2$, a subset $U$ of $V(\dot{H}),$ and a vertex $w$ not in $V(\dot{H}),$  the signed graph $\dot{H}_U(w)$ is obtained by inserting the edges (the sign of  edge is any) between $w$ and all vertices of $U.$
We  call that $w$ is a \emph{good vertex} if $2<\rho(\dot{H}_U(w))\le\lambda^\ast$.

The first lemma is interlacing theorem.
\begin{lemma}\label{Lem2.5}\cite{B11}
Let $A$ be a symmetric matrix of order $n$ with eigenvalues $\lambda_1\geq\lambda_2\geq\dots \geq\lambda_n$ and $B$ a principal submatrix of $A$ of order $m$ with eigenvalues $\mu_1\geq\mu_2\geq\dots \geq\mu_m.$ Then the eigenvalues of $B$ interlace the eigenvalues of $A,$ that is, $\lambda_i\geq \mu_i\geq \lambda_{n-m+i}$ for $i =1, \dots, m.$
\end{lemma}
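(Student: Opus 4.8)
The plan is to derive the interlacing inequalities from the Courant--Fischer variational (min--max) characterization of the eigenvalues of a real symmetric matrix. Since applying the same permutation to the rows and columns of $A$ leaves its spectrum unchanged and carries any principal submatrix to a leading one, I may assume without loss of generality that $B$ is the leading $m\times m$ principal submatrix of $A$. I then identify $\mathbb{R}^m$ with the coordinate subspace $W=\operatorname{span}\{e_1,\dots,e_m\}\subseteq\mathbb{R}^n$. The crucial observation is that for every $y\in\mathbb{R}^m$ with corresponding vector $x\in W$ one has $y^{\top}By=x^{\top}Ax$ and $\|y\|=\|x\|$, so the Rayleigh quotient of $B$ on $\mathbb{R}^m$ coincides with the Rayleigh quotient of $A$ restricted to $W$.

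First I would establish $\mu_i\le\lambda_i$. The max--min form of Courant--Fischer, together with the Rayleigh-quotient identity above, gives
\[
\lambda_i=\max_{\substack{U\subseteq\mathbb{R}^n\\ \dim U=i}}\ \min_{\substack{x\in U\\ x\ne 0}}\frac{x^{\top}Ax}{x^{\top}x},\qquad
\mu_i=\max_{\substack{U\subseteq W\\ \dim U=i}}\ \min_{\substack{x\in U\\ x\ne 0}}\frac{x^{\top}Ax}{x^{\top}x}.
\]
The outer maximum defining $\mu_i$ ranges only over those $i$-dimensional subspaces contained in $W$, a subfamily of those admissible for $\lambda_i$; hence the restricted maximum can only be smaller, yielding $\mu_i\le\lambda_i$.

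Next I would establish $\mu_i\ge\lambda_{n-m+i}$ by the dual min--max form. Writing $j=n-m+i$ so that $n-j+1=m-i+1$, Courant--Fischer yields
\[
\lambda_{n-m+i}=\min_{\substack{V\subseteq\mathbb{R}^n\\ \dim V=m-i+1}}\ \max_{\substack{x\in V\\ x\ne 0}}\frac{x^{\top}Ax}{x^{\top}x},\qquad
\mu_i=\min_{\substack{V\subseteq W\\ \dim V=m-i+1}}\ \max_{\substack{x\in V\\ x\ne 0}}\frac{x^{\top}Ax}{x^{\top}x}.
\]
Here the minimum defining $\mu_i$ is restricted to subspaces of $W$, again a subfamily of those competing for $\lambda_{n-m+i}$, so the constrained minimum can only be larger, giving $\mu_i\ge\lambda_{n-m+i}$. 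Combining the two bounds produces $\lambda_i\ge\mu_i\ge\lambda_{n-m+i}$ for each $i=1,\dots,m$, as claimed.

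Since the whole argument rests on the two variational formulas, the only point requiring care is the bookkeeping of subspace dimensions---in particular matching $n-j+1=m-i+1$ in the min--max step---and verifying that the Rayleigh quotient genuinely transfers between $B$ and the restriction of $A$ to $W$. Neither step presents a real obstacle. If one prefers to avoid invoking Courant--Fischer, an alternative route is to prove the single-deletion case $m=n-1$ directly (for instance through the sign-change behaviour of $\det(xI-A)$ along the removed coordinate) and then iterate the one-step interlacing $n-m$ times; the min--max argument, however, is shorter and handles the general $m$ in one stroke.
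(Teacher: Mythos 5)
Your proof is correct: the reduction to a leading principal submatrix by a simultaneous row/column permutation is legitimate, the Rayleigh-quotient identity between $B$ and the restriction of $A$ to the coordinate subspace $W$ is exactly what makes the subfamily comparisons work, and the dimension bookkeeping in both applications of Courant--Fischer (in particular $n-j+1=m-i+1$ for $j=n-m+i$) is right. Note, however, that the paper itself contains no proof of this lemma; it is quoted as a known result with a citation to Brouwer and Haemers, so there is no internal argument to compare against. For completeness, the proof in that cited reference takes a slightly different, self-contained route: it fixes orthonormal eigenvector bases $u_1,\dots,u_n$ of $A$ and $v_1,\dots,v_m$ of $B$, uses a dimension count to produce a nonzero vector $s\in\langle v_1,\dots,v_i\rangle$ whose embedding into $\mathbb{R}^n$ is orthogonal to $u_1,\dots,u_{i-1}$, reads off $\lambda_i\geq \mu_i$ from the Rayleigh quotient of that single vector, and then obtains $\mu_i\geq\lambda_{n-m+i}$ by applying the same argument to $-A$ and $-B$ with the orderings reversed. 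Your min--max argument gets both inequalities uniformly from one variational principle; the reference's argument avoids invoking Courant--Fischer as a black box at the cost of some eigenvector bookkeeping. Either way, the result is fully established, and your closing remark about iterating the one-step case $m=n-1$ is a valid (if more delicate, because of multiple eigenvalues) third option.
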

Define  $\overline{\mathcal{G}}_S^{\lambda^\ast}$  to be the set of the connected  unbalanced signed graphs with spectral radius  $2<\rho(\dot{G})\le\lambda^\ast.$ 
Note that the balanced signed graph and unsigned graph share  the same spectrum,
thus
the main goal of this paper is to determine all  signed graphs of the set $\overline{\mathcal{G}}_S^{\lambda^\ast}.$
 Some properties of the signed graph $\dot{G}$ of the set $\overline{\mathcal{G}}_S^{\lambda^\ast}$  are listed as following.

 \begin{lemma}\label{l3.4}
Let $\dot{G}\in \mathcal{G}_S^{\lambda^\ast}$. Then  $\Delta(\dot{G})\le 4.$
 \end{lemma}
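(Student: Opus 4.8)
The plan is to bound the maximum degree by exhibiting a small induced subgraph (a star plus possibly one extra edge) that already forces the spectral radius above $\lambda^\ast$ whenever $\Delta(\dot{G})\ge 5$. Since $\lambda^\ast=\sqrt{2+\sqrt5}\approx 2.058$, and $\rho(\dot{G})=\max\{|\lambda_i(\dot{G})|\}$ dominates the spectral radius of every induced subgraph by the interlacing theorem (Lemma~\ref{Lem2.5}), it suffices to show that any signed graph containing a vertex of degree $5$ must contain an induced subgraph whose spectral radius exceeds $\lambda^\ast$.

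First I would suppose, for contradiction, that there is a vertex $u$ with $d_u\ge 5$, and let $v_1,\dots,v_5$ be five distinct neighbours of $u$. The induced subgraph $\dot{G}[\{u,v_1,\dots,v_5\}]$ contains, at the very least, the signed star $\dot{K}_{1,5}$ on these six vertices. The key observation is that signs are irrelevant for the star: any signed star is switching isomorphic to the all-positive star $K_{1,5}$, because a tree has no cycles and is therefore always balanced, hence switching equivalent to its underlying unsigned graph. Thus the induced subgraph on $\{u,v_1,\dots,v_5\}$ has spectral radius at least $\rho(K_{1,5})=\sqrt{5}\approx 2.236>\lambda^\ast$. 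By interlacing, $\rho(\dot{G})\ge \rho(\dot{G}[\{u,v_1,\dots,v_5\}])\ge\sqrt5>\lambda^\ast$, contradicting $\dot{G}\in\mathcal{G}_S^{\lambda^\ast}$.

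The computation $\rho(K_{1,n})=\sqrt n$ is standard: the nonzero eigenvalues of the star come from the quotient on the $\{$centre, leaves$\}$ partition, giving eigenvalues $\pm\sqrt n$. Since $\sqrt5>\sqrt{2+\sqrt5}$ (equivalently $5>2+\sqrt5$, i.e.\ $3>\sqrt5$, which holds), degree $5$ is already excluded, and a fortiori any larger degree is excluded by the same interlacing argument applied to a five-leaf substar. Therefore $\Delta(\dot{G})\le 4$, as claimed.

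I do not anticipate a genuine obstacle here; the lemma is elementary once one notices that the sign pattern on a star can be switched away. The only point requiring a moment's care is the justification that signs can be ignored, which is precisely the balancedness of trees together with the fact (noted in the preliminaries) that switching isomorphic signed graphs share the same spectrum; this is what lets us pass from an arbitrary signed neighbourhood of $u$ to the unsigned star and invoke $\rho(K_{1,5})=\sqrt5$.
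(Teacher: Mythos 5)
There is a genuine gap. Your argument hinges on the step ``the induced subgraph $\dot{G}[\{u,v_1,\dots,v_5\}]$ contains the signed star $\dot{K}_{1,5}$, hence its spectral radius is at least $\rho(K_{1,5})=\sqrt{5}$.'' The five neighbours $v_1,\dots,v_5$ need not form an independent set, so the star is in general only a \emph{non-induced} subgraph of $\dot{G}[\{u,v_1,\dots,v_5\}]$, and the interlacing theorem (Lemma \ref{Lem2.5}) only controls principal submatrices, i.e.\ induced subgraphs. The missing monotonicity --- that the spectral radius cannot decrease when edges are added --- is a Perron--Frobenius phenomenon for nonnegative matrices and simply fails for signed adjacency matrices: for instance $P_4$ is a spanning subgraph of the unbalanced quadrilateral $\dot{C}_4^-$, yet $\rho(P_4)=\tfrac{1+\sqrt{5}}{2}\approx 1.618$ while $\rho(\dot{C}_4^-)=\sqrt{2}\approx 1.414$. (The paper's own $\dot{T}_6$, a $4$-regular signed graph with $\rho=2$ rather than $4$, illustrates the same point.) You cannot repair this by first excluding edges among the $v_i$ via triangle-freeness, since the paper's $\dot{C}_3$-freeness result (Lemma \ref{c3}) is itself deduced from the degree bound you are trying to prove. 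Your observation that signs on a tree can be switched away is correct, but it only helps in the special case where the neighbourhood of $u$ is independent.

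The lemma is nevertheless true, and the paper's proof avoids subgraphs altogether: since $(A(\dot{G})^2)_{uu}=\sum_{v}\sigma_{uv}^2=d_u$, one has $\rho(\dot{G})^2=\lambda_{\max}(A(\dot{G})^2)\ge \max_i (A(\dot{G})^2)_{ii}=\Delta(\dot{G})$, whence $\Delta(\dot{G})\le(\lambda^\ast)^2=2+\sqrt{5}<5$. This diagonal-of-$A^2$ argument is insensitive to the signs and to whatever edges run among the neighbours of $u$, which is exactly the robustness your star-containment step lacks.
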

\begin{proof}
Note that $\rho(\dot{G})^2=max\{\lambda_i(A(\dot{G})^2)| i=1,\dots, n\},$ then $\rho(\dot{G})^2\ge max\{(A(\dot{G})^2)_{ii}| i=1,\dots,n\}= \Delta(\dot{G}).$
Since $\Delta(\dot{G})\le \rho(\dot{G})^2\le (\lambda^\ast)^2<5,$ then $\Delta(\dot{G})\le 4.$
\end{proof}
We designate that a signed graph $\dot{H}$ is an  induced subgraph of $\dot{G}$ by writing $\dot{H}\subset \dot{G}$. If  $\dot{H}$ is not an  induced subgraph of $\dot{G}$, then we say that $\dot{G}$ is $\dot{H}$-free.

 \begin{lemma}\label{c3}
Let $\dot{G}\in \overline{\mathcal{G}}_S^{\lambda^\ast}$. Then  $\dot{G}$ is $\dot{C}_3$-free.
 \end{lemma}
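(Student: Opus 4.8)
The plan is to argue by contradiction using the interlacing theorem (Lemma \ref{Lem2.5}): assume some $\dot{G}\in\overline{\mathcal{G}}_S^{\lambda^\ast}$ contains a triangle on vertices $\{a,b,c\}$, and exhibit an induced subgraph of spectral radius $>\lambda^\ast$, which would force $\rho(\dot{G})>\lambda^\ast$ and contradict $\rho(\dot{G})\le\lambda^\ast$. Since $\rho(\dot{G})>2$, the graph is not a single triangle, so connectivity yields a vertex $w$ outside $\{a,b,c\}$ adjacent to at least one of them. The first step is therefore to examine all connected signed graphs on the four vertices $\{a,b,c,w\}$ that contain a triangle, according to whether $w$ meets one, two, or three triangle vertices.

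Up to switching isomorphism these four-vertex graphs form a short list: the two signed paws (a balanced or an unbalanced triangle with one pendant edge), the three signed $K_4$'s (distinguished by the number of negative triangular faces, which must be even), and the three signed diamonds $K_4-e$ (distinguished by the signs of their two triangular faces). Computing the characteristic polynomials and evaluating at $\lambda^\ast$ via $(\lambda^\ast)^2=2+\sqrt{5}$ and $(\lambda^\ast)^4=9+4\sqrt{5}$, I expect every one of these to satisfy $\rho>\lambda^\ast$ with a single exception: the diamond $\dot{D}$ whose two triangles carry opposite signs, for which $A(\dot{D})$ has eigenvalues $-2,-1,1,2$ and hence $\rho(\dot{D})=2$ exactly. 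Consequently, if $\dot{G}$ contains a triangle but no induced subgraph of spectral radius $>\lambda^\ast$, then every four-vertex triangle-extension inside $\dot{G}$ must be switching isomorphic to $\dot{D}$; in particular $\dot{G}$ contains $\dot{D}$.

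The crux is thus the treatment of $\dot{D}$, where one-step interlacing from the triangle is powerless because $\rho(\dot{D})=2$ sits exactly on the small limit point. Here I would use the eigenvector of $A(\dot{D})$ for the eigenvalue $2$, which (after labelling) is supported on the positive triangle and vanishes on the unique degree-two vertex $d$ lying in the negative triangle. Appending a new neighbour $e$ to $\dot{D}$ and expanding by the pendant/path recursion $\phi_{\dot{D}+e}(\lambda)=\lambda\,\phi_{\dot{D}}(\lambda)-\phi_{\dot{D}-x}(\lambda)$, where $x$ is the attachment vertex (the edge sign being irrelevant after switching), shows that whenever $e$ meets a vertex of nonzero eigenvector weight the spectral radius jumps above $\lambda^\ast$, while the only attachments preserving $\rho=2$ occur at the zero-weight vertex $d$ and merely reproduce larger signed graphs of spectral radius $2$.

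To close the argument one must rule out that such $\rho=2$ extensions could eventually be pushed into the open interval $(2,\lambda^\ast)$ while still carrying a triangle. I would handle this by invoking the classification of $\mathcal{G}_S^{2}$ of McKee and Smyth \cite{MS07}: every triangle-containing signed graph of spectral radius $\le 2$ is an induced subgraph of $\dot{S}_{14}$, $\dot{S}_{16}$, or $\dot{T}_{2k}$. It then suffices to pass to a maximal triangle-containing induced subgraph $\dot{H}\subseteq\dot{G}$ with $\rho(\dot{H})\le 2$, read off from these three families which vertices of $\dot{G}$ can be adjoined, and verify for each admissible one-vertex extension that $\rho$ exceeds $\lambda^\ast$. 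The main obstacle is exactly this last finite but delicate case analysis around $\dot{D}$: because the relevant configuration sits precisely at spectral radius $2$, the contradiction cannot come from interlacing alone and requires the eigenvector and extension computation together with the $\rho\le 2$ classification.
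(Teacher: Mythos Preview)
Your four-vertex analysis is correct and the strategy of growing the triangle inside $\dot{G}$ is sound, but the five-vertex step has two concrete problems. First, your pendant computation tracks only the eigenvector for $\lambda_1(\dot{D})=2$ and forgets that $\rho(\dot{D})$ is also realised by $\lambda_4(\dot{D})=-2$, whose eigenvector does \emph{not} vanish at your vertex $d$. Hanging a pendant at $d$ therefore does not preserve $\rho=2$: one finds $\phi_{\dot{D}+e}(\lambda)=(\lambda-2)(\lambda+1)(\lambda^{3}+\lambda^{2}-3\lambda-1)$, and the cubic factor has a root near $-2.17$, so in fact \emph{every} pendant attachment to $\dot{D}$ already gives $\rho>\lambda^\ast$. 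Second, and more importantly, pendants are not the extensions that survive. Running your own four-vertex test against both triangles of $\dot{D}$ forces any admissible fifth vertex to be adjacent to exactly three vertices of $\dot{D}$ (with prescribed signs), and that extension has spectrum $\{2,2,0,-2,-2\}$. These multi-edge attachments are the genuine $\rho=2$ continuations your McKee--Smyth endgame must handle; since they sit inside the infinite family $\dot{T}_{2k}$, the promised ``finite case analysis'' is not obviously finite without a further structural argument.

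The paper avoids all of this. It cites the tables of signed-graph spectra on at most six vertices \cite{B91}: no connected unbalanced signed graph on $n\le 6$ vertices satisfies $2<\rho\le\lambda^\ast$, and the only connected six-vertex triangle-containing signed graph with $\rho\le\lambda^\ast$ is $\dot{T}_6$. Since $\dot{T}_6$ is $4$-regular, any seventh vertex of $\dot{G}$ adjacent to it forces $\Delta(\dot{G})\ge 5$, contradicting Lemma~\ref{l3.4}. Your route can in principle be completed, but it would rebuild by hand the six-vertex classification the tables supply in one stroke, and the $4$-regularity of $\dot{T}_6$ is exactly the structural fact your growing process is heading towards.
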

\begin{proof}Suppose on the contrary that $\dot{C}_3\subset \dot{G}.$
 By the table of the spectra of signed graphs with at most six vertices \cite{B91}, 
 we find that there is no signed graph $\dot{G}\in \overline{\mathcal{G}}_S^{\lambda^\ast}$ of order $n\le 6.$
 Next we consider $n\ge 7.$ Then $\dot{G}$ must contain an induced subgraph $\dot{G}_1$ with  order $|V(\dot{G}_1)|=6$ and $\rho(\dot{G}_1)\le\lambda^\ast.$ By \cite[page 19--40]{B91}, we have $\dot{G}_1\sim \dot{T}_6.$
 Since  $\dot{T}_6$ is 4-regular, then $\Delta(\dot{G})\ge  5,$  which contradicts to Lemma \ref{l3.4}.  Hence, $\dot{G}$ is $\dot{C}_3$-free.
\end{proof}
 \begin{figure}
\begin{center}
  \includegraphics[width=15cm,height=3cm]{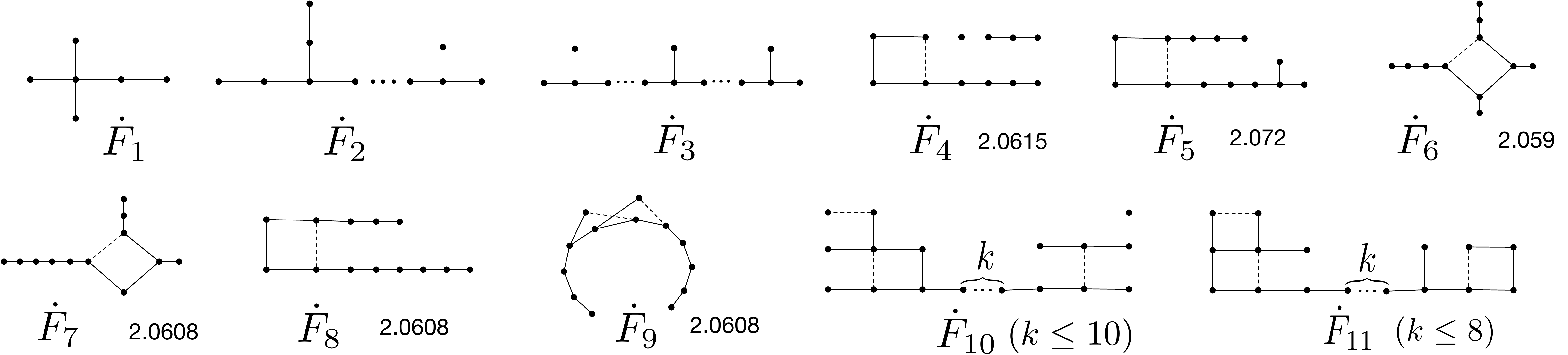}
  \end{center}
   \vskip -0.6cm\caption{Forbidden signed graphs $\dot{F}_1-\dot{F}_{11}$ (up to switching equivalence). The number denotes the  spectral radius of corresponding signed graph.}
  \label{f1}
\end{figure}

We say that $\dot{H}$ is \emph{forbidden}  if $\rho(\dot{H})>\lambda^\ast.$
Evidently, if $\dot{H}$ is forbidden, then $\dot{G}$ is $\dot{H}$-free.
\begin{lemma}\label{lem-2.7}
All (unsigned) graphs expect the graphs in the set $\mathcal{G}^{\lambda^\ast},$ the signed graph $\dot{\mathcal{C}}_{2\ell+1}^1$ and
 the signed graphs $\dot{F}_i$ $(i=1,\dots,11)$ of Fig. \ref{f1} are forbidden.
 \end{lemma}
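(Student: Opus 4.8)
The plan is to verify, family by family, that each graph listed has spectral radius strictly larger than $\lambda^\ast$; since \emph{forbidden} means exactly $\rho(\cdot)>\lambda^\ast$, the three families are independent, and throughout I write $\phi(\cdot,x)$ for the characteristic polynomial. For the unsigned graphs there is nothing to prove beyond invoking Theorem~\ref{thm1}$(ii)$, which identifies $\mathcal{G}^{\lambda^\ast}$ as \emph{precisely} the set of connected graphs of spectral radius at most $\lambda^\ast$; hence any connected unsigned graph not lying in $\mathcal{G}^{\lambda^\ast}$ satisfies $\rho>\lambda^\ast$ and is forbidden. (These simultaneously account for the balanced obstructions, a balanced signed graph being cospectral with its underlying graph.)

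The substance of the lemma is the infinite family $\dot{\mathcal{C}}_{2\ell+1}^1$. Since the unbalanced odd cycle has its extremal eigenvalue on the negative side, I would show that appending a pendant pushes the \emph{least} eigenvalue below $-\lambda^\ast$, so that $\rho=\max\{\lambda_1,-\lambda_n\}>\lambda^\ast$. Writing $n=2\ell+1$ and using the standard pendant-vertex reduction $\phi(\dot{\mathcal{C}}_n^1,x)=x\,\phi(\dot{C}_n^-,x)-\phi(P_{n-1},x)$ (deleting the attachment vertex from the cycle leaves a path, whose signs are immaterial), together with the closed forms $\phi(\dot{C}_n^-,x)=2\bigl(T_n(x/2)+1\bigr)$ and $\phi(P_{n-1},x)=U_{n-1}(x/2)$ for the Chebyshev polynomials $T_n,U_n$ of the first and second kind, I would evaluate at $x=-\lambda^\ast$. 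Setting $\lambda^\ast=2\cosh t$ and using that $n$ is odd gives $T_n(-\cosh t)=-\cosh(nt)$ and $U_{n-1}(-\cosh t)=\sinh(nt)/\sinh t$, whence $\phi(\dot{\mathcal{C}}_n^1,-\lambda^\ast)=2\lambda^\ast\cosh(nt)-2\lambda^\ast-\sinh(nt)/\sinh t$.

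The decisive point is that $\lambda^\ast$ is the largest root of $x^4-4x^2-1$ (indeed $(\lambda^\ast)^2=2+\sqrt5$ solves $u^2-4u-1=0$), which is exactly the relation $2\lambda^\ast\sinh t=1$ for $\cosh t=\lambda^\ast/2$. Feeding this in collapses the dominant terms: $\phi(\dot{\mathcal{C}}_n^1,-\lambda^\ast)=2\lambda^\ast\bigl(\cosh(nt)-\sinh(nt)\bigr)-2\lambda^\ast=2\lambda^\ast\bigl(e^{-nt}-1\bigr)<0$. Since $\phi$ is monic of even degree, $\phi(-\infty)=+\infty$, so a root lies below $-\lambda^\ast$ and therefore $\rho(\dot{\mathcal{C}}_{2\ell+1}^1)>\lambda^\ast$ uniformly in $\ell$. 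I expect this exact cancellation of the leading exponential ($\cosh(nt)$ against $\sinh(nt)$) to be the only genuine obstacle: a crude estimate is hopeless because $2\lambda^\ast-1/\sinh t$ vanishes identically, so the sign is dictated purely by the defining quartic of $\lambda^\ast$.

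For the eleven sporadic signed graphs $\dot{F}_1,\dots,\dot{F}_{11}$ the argument is a finite verification: each has few vertices, so after fixing a convenient representative of its switching class (spectra being switching-invariant) I would compute its characteristic polynomial and confirm it has a root outside $[-\lambda^\ast,\lambda^\ast]$, i.e.\ $\rho(\dot{F}_i)>\lambda^\ast$. The numbers attached to these graphs in Fig.~\ref{f1} are exactly these spectral radii, so reproducing them presents no difficulty.
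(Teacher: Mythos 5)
Your proposal is correct, and for the unsigned graphs and the sporadic $\dot{F}_i$ it coincides with what the paper does (cite Theorem~\ref{thm1}$(ii)$, respectively a finite spectral check). Where you genuinely diverge is on the infinite family $\dot{\mathcal{C}}_{2\ell+1}^1$: the paper disposes of it in one line via the identity $\rho(\dot{\mathcal{C}}_{2\ell+1}^1)=\rho(\dot{C}_{2\ell+1}^1)$ --- on an odd cycle with a pendant edge the all-negative signature is unbalanced, hence switching-equivalent to $\dot{\mathcal{C}}_{2\ell+1}^1$, and its adjacency matrix is the negative of $A(C_{2\ell+1}^1)$, so the two have the same spectral radius and the unsigned case $C_{2\ell+1}^1\notin\mathcal{G}^{\lambda^\ast}$ applies. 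You instead compute $\phi_{\dot{\mathcal{C}}_n^1}(-\lambda^\ast)$ directly via the pendant-vertex reduction and Chebyshev closed forms, and exploit the defining quartic $x^4-4x^2-1$ of $\lambda^\ast$ (equivalently $2\lambda^\ast\sinh t=1$) to collapse the value to $2\lambda^\ast(e^{-nt}-1)<0$; your identities and the parity bookkeeping for odd $n$ all check out, and the monic-even-degree sign argument correctly places a root below $-\lambda^\ast$. Your route is heavier but self-contained and pins down exactly why $\lambda^\ast$ is the threshold; the paper's route is shorter and reuses the known unsigned classification, which is the structural reason the family is forbidden. Both are valid; the only cosmetic point is that your argument establishes $\rho>\lambda^\ast$ (the definition of \emph{forbidden}), with the ``$\dot{G}$ is $\dot{H}$-free'' consequence supplied, as in the paper, by interlacing.
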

  \begin{proof}It is  easily to see that all (unsigned) graphs expect the graphs in the set $\mathcal{G}^{\lambda^\ast},$ the signed graph $\dot{\mathcal{C}}_{2\ell+1}^1$ (since  $\rho(\dot{\mathcal{C}}_{2\ell+1}^1)=\rho(\dot{C}_{2\ell+1}^1)$)
 and the signed graphs $\dot{F}_1-\dot{F}_{11}$ of Fig. \ref{f1} have spectral radius  $\rho(\dot{G})>\lambda^\ast.$ So all of them are forbidden by interlacing.
  \end{proof}

\begin{figure}
\begin{center}
  \includegraphics[width=15.5cm,height=7cm]{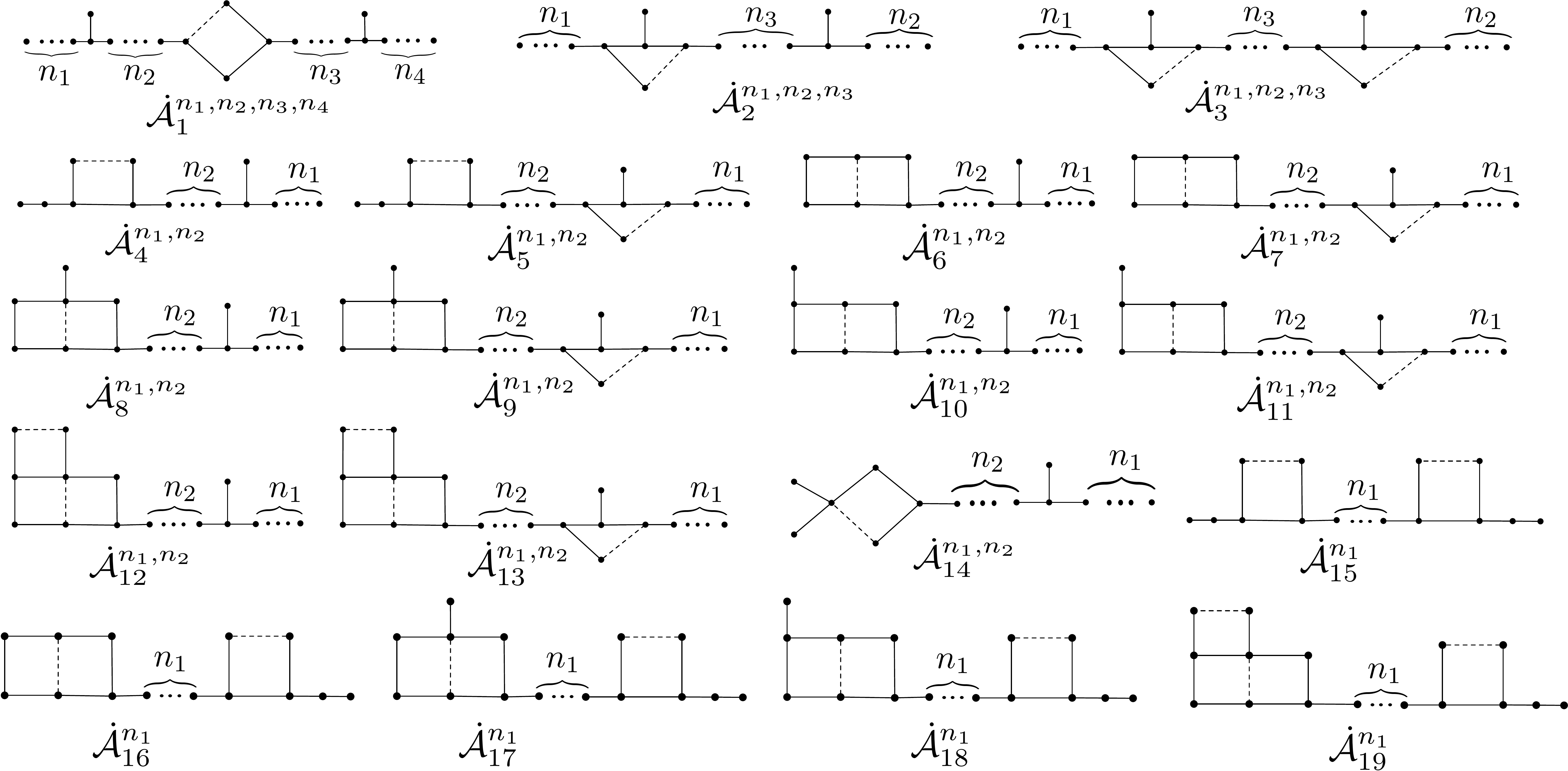}
  \end{center}
   \vskip -0.8cm\caption{The signed graphs in Lemma \ref{l3.5}.}
  \label{q5}
\end{figure}

The next lemma plays an important role in this paper.
The proof  is presented in Appendix A.

 \begin{lemma}\label{l3.5}
Let $\dot{\mathcal{A}}_1^{n_1,n_2,n_3, n_4},$ $\dot{\mathcal{A}}_2^{n_1,n_2,n_3},$ $\dot{\mathcal{A}}_3^{n_1,n_2,n_3}$ $(n_1\ge n_2)$, $\dot{\mathcal{A}}_i^{n_1,n_2}$ $(i=4,\dots,14),$ $\dot{\mathcal{A}}_i^{n_1}$ $(i=15,\dots,19)$  be the signed graphs depicted in  Fig. \ref{q5}. Then

$(1)$ if $\rho(\dot{\mathcal{A}}_1^{n_1,n_2,n_3,n_4})\le\lambda^\ast$,
then  $\dot{\mathcal{A}}_1^{n_1,n_2,n_3, n_4}$ is an induced subgraph of $[P_4,v_2,s,v_2,P_4],$
$[T_{a_3,1,a_3-1},v_{a_3-1},s_3,v_2,P_4]$ or  $[T_{a_3,1,a_3-1},v_{a_3-1},s,v_{b_3-1},T_{b_3,1,b_3-1}]$,

$(2)$  if $\rho(\dot{\mathcal{A}}_2^{n_1,n_2,n_3})\le\lambda^\ast$,
 then $\dot{\mathcal{A}}_2^{n_1,n_2,n_3}$ is an induced subgraph of 
  $(\dot{Q}^{\prime}_{1,0},v_{1},2,v_{2},P_{4}),$
$(\dot{Q}^{\prime}_{a_4,a_4},v_{a_4},2,v_{2},P_{4}),$
 $(\dot{Q}^{\prime}_{1,0},v_{1},2,v_{a_3-1},T_{a_3,1,a_3-1})$,
 $(\dot{Q}^{\prime}_{a,a},v_{a},2,v_{a_3-1},T_{a_3,1,a_3-1})$ $($where $a\ge a_3-1),$ $[\dot{Q}^{\prime}_{a_1,a_1},v_{a_1},s,v_2,P_4]$ or
 $[\dot{Q}^{\prime}_{a_1,a_1},v_{a_1},s,v_{a_3-1},T_{a_3,1,a_3-1}],$

$(3)$ if $\rho(\dot{\mathcal{A}}_3^{n_1,n_2,n_3})\le\lambda^\ast$, then
$\dot{\mathcal{A}}_3^{n_1,n_2,n_3}$ is an induced subgraph of 
$(\dot{Q}_{a_1,a_1}^{\prime},v_{a_1},2,v_1,\dot{Q}^\prime_{1,0}),$
$(\dot{Q}_{a_3,a_3}^{\prime},v_{a_3},2,v_{a_2},\dot{Q}_{a_2,a_2}^{\prime})$ or
$[\dot{Q}_{a_1,a_1}^{\prime},v_{a_1},s,v_{b_1},\dot{Q}_{b_1,b_1}^{\prime}],$

$(4)$ if  $\rho(\dot{\mathcal{A}}_4^{n_1,n_2})\le\lambda^\ast$, then 
$\dot{\mathcal{A}}_4^{n_1,n_2}$ is an induced subgraph of $[\dot{G}_5^{10},v_{10},s,v_2,P_4]$ or $[\dot{G}_5^{10},v_{10},s,v_{a_3-1},T_{a_3,1,a_3-1}]$,

$(5)$ if $\rho(\dot{\mathcal{A}}_5^{n_1,n_2})\le\lambda^\ast$, then $\dot{\mathcal{A}}_5^{n_1,n_2}$ is an induced subgraph of $(\dot{G}_5^{10},v_{10},2,v_1,\dot{Q}^\prime_{1,0}),$ 
$(\dot{G}_5^{10},v_{10},2,v_{a_4},\dot{Q}^\prime_{a_4,a_4})$ or 
$[\dot{G}_5^{10},v_{10},s,v_{a_1},\dot{Q}^\prime_{a_1,a_1}]$,

$(6)$ if  $\rho(\dot{\mathcal{A}}_i^{n_1,n_2})\le\lambda^\ast$ $(i=6,8)$, then 
$\dot{\mathcal{A}}_i^{n_1,n_2}$ is an induced subgraph of $(\dot{G}_2^{9},v_{9},2,v_2,P_4),$ $[\dot{G}_2^{9},v_{9},s,v_2,P_4],$ $(\dot{G}_2^{9},v_{9},2,v_2,T_{3,1,2}),$
$(\dot{G}_2^{9},v_{9},2,v_3,T_{4,1,3})$ or $[\dot{G}_2^{9},v_{9},s,v_{a_3-1},T_{a_3,1,a_3-1}],$

$(7)$ if  $\rho(\dot{\mathcal{A}}_i^{n_1,n_2})\le\lambda^\ast$ $(i=7,9)$, then 
$\dot{\mathcal{A}}_i^{n_1,n_2}$ is an induced subgraph of $(\dot{G}_2^{9},v_{9},2,v_1,\dot{Q}^\prime_{1,0}),$  $(\dot{G}_2^{9},v_{9},2,v_{a_2},\dot{Q}^\prime_{a_2,a_2})$ or $[\dot{G}_2^{9},v_{9},s,v_{a_1},\dot{Q}^\prime_{a_1,a_1}],$

$(8)$  if $\rho(\dot{\mathcal{A}}_{10}^{n_1,n_2})\le\lambda^\ast$,
then $\dot{\mathcal{A}}_{10}^{n_1,n_2}$ is an induced subgraph of
$[\dot{G}_3^{11},v_{11},s,v_2,P_4],$  $(\dot{G}_3^{11},v_{11},$ $2,v_2,T_{3,1,2}),$
$(\dot{G}_3^{11},v_{11},2,v_3,T_{4,1,3})$ or $[\dot{G}_3^{11},v_{11},s,v_{a_3-1},T_{a_3,1,a_3-1}],$

$(9)$ if  $\rho(\dot{\mathcal{A}}_{11}^{n_1,n_2})\le\lambda^\ast$,
then  $\dot{\mathcal{A}}_{11}^{n_1,n_2}$ is an induced subgraph of
$(\dot{G}_3^{11},v_{11},2,v_1,\dot{Q}^\prime_{1,0}),$  
$(\dot{G}_3^{11},v_{11},2,v_{a_2},\dot{Q}^\prime_{a_2,a_2})$ or $[\dot{G}_3^{11},v_{11},s,v_{a_1},\dot{Q}^\prime_{a_1,a_1}],$

$(10)$  if  $\rho(\dot{\mathcal{A}}_{12}^{n_1,n_2})\le\lambda^\ast$,
then $\dot{\mathcal{A}}_{12}^{n_1,n_2}$ is an induced subgraph of
  $[\dot{G}_4^{12},v_{12},s,v_2,P_4]$ or $[\dot{G}_4^{12},v_{12},s,v_{a_3-1},T_{a_3,1,a_3-1}],$
  
$(11)$ if  $\rho(\dot{\mathcal{A}}_{13}^{n_1,n_2})\le\lambda^\ast$,
then     $\dot{\mathcal{A}}_{13}^{n_1,n_2}$ is an induced subgraph of
$(\dot{G}_4^{12},v_{12},2,v_1,\dot{Q}^\prime_{1,0}),$ 
$(\dot{G}_4^{12},v_{12},2,v_{a_4},\dot{Q}^\prime_{a_4,a_4})$ or $[\dot{G}_4^{12},v_{12},s,v_{a_1},\dot{Q}^\prime_{a_1,a_1}],$

$(12)$ if  $\rho(\dot{\mathcal{A}}_{14}^{n_1,n_2})\le\lambda^\ast$,
then   $\dot{\mathcal{A}}_{14}^{n_1,n_2}\subset [P_4,v_2,s]$ or $\dot{\mathcal{A}}_{14}^{n_1,n_2}\subset [T_{a_3,1,a_3-1},v_{a_3-1},s]$.
 
$(13)$ if  $\rho(\dot{\mathcal{A}}_i^{n_1})\le\lambda^\ast$ $(i=15,16,17,18,19)$,
then $\dot{\mathcal{A}}_i^{n_1}$ is an induced subgraph of  $[\dot{G},v,s,u,\dot{H}]$, where  $(\dot{G},v),$ $(\dot{H},u)\in \{(\dot{G}_4^{12},v_{12}),$ $(\dot{G}_2^9,v_{9}),(\dot{G}_5^{10},v_{10})\},$ 

where $b_1\ge 1,$ $b_3\ge 1,$ $a_i\ge i$ for $i=1,2,3,4$ and $s\ge 3$.

 \end{lemma}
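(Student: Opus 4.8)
The plan is to treat all the families $\dot{\mathcal{A}}_i$ of Fig.~\ref{q5} by a single mechanism. Each $\dot{\mathcal{A}}_i$ is an unbalanced signed graph assembled from a fixed ``core'' gadget together with one or more attached paths whose lengths are exactly the parameters $n_1,\dots,n_4$. The core already forces $\rho>2$, so the behaviour of the whole graph is governed by what lengthening an attached path does to the spectral radius. I would first record the global constraints already in hand, namely $\Delta(\dot{G})\le 4$ (Lemma~\ref{l3.4}), $\dot{C}_3$-freeness (Lemma~\ref{c3}), and the forbidden list of Lemma~\ref{lem-2.7} together with $\dot{F}_1$--$\dot{F}_{11}$ of Fig.~\ref{f1}; these rule out many parameter combinations outright by interlacing (Lemma~\ref{Lem2.5}).

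The quantitative engine is the eigenvalue equation along a pendant path. Switching so that each attached path is all-positive, and taking an eigenvector of $A(\dot{\mathcal{A}}_i)$ for the eigenvalue $x$ realising $\rho(\dot{\mathcal{A}}_i)$, the entries on any such path satisfy the Chebyshev-type recurrence $x\,y_j=y_{j-1}+y_{j+1}$, solved by $y_j=\alpha\mu^{j}+\beta\mu^{-j}$ with $\mu+\mu^{-1}=x$. Imposing the boundary condition at the free end and the matching condition at the attachment vertex yields, for each family, a single transcendental equation in $x$ and the path lengths. From it I would establish that $\rho(\dot{\mathcal{A}}_i)$ is strictly increasing in each path length, compute its limit as that length tends to infinity, and compare the limit with $\lambda^\ast=\tau^{\frac12}+\tau^{-\frac12}$. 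This dichotomy drives everything: where the limit exceeds $\lambda^\ast$, the parameter is bounded by an explicit threshold (equivalently, at the threshold one exhibits a concrete forbidden subgraph---a connected unsigned graph not lying in $\mathcal{G}^{\lambda^\ast}$, or some $\dot{F}_j$---whose appearance is forced); where the limit equals $2$, the corresponding end degenerates into a $\dot{T}_{2k}$-type structure and the parameter may grow without bound.

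With every ``decoration'' parameter bounded, only finitely many---or a single one-parameter family of---configurations survive for each $\dot{\mathcal{A}}_i$. The remaining step is bookkeeping: for each surviving graph I would display the explicit induced embedding into the named maximal target on the right-hand side of the relevant item $(1)$--$(13)$, verifying that the signs agree up to switching. The unbounded direction, when present, is precisely the insertion of a $\dot{T}_{2s}^{\prime\prime\prime}$ (or a bare path) of arbitrary length, which reproduces the bracketed and parenthesised maximal graphs of Definition~\ref{d2.3}; the appearance of $s\ge 3$ versus the short connector ``$2$'' reflects exactly whether that degenerate end sustains spectral radius $2$ or must stay short to keep $\rho\le\lambda^\ast$.

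I expect the main obstacle to be the coupling between parameters. In the multi-parameter families $\dot{\mathcal{A}}_1,\dot{\mathcal{A}}_2,\dot{\mathcal{A}}_3$ the admissible range of one path length depends on the others, so the threshold analysis cannot be performed one coordinate at a time but must track the joint transcendental condition, and the monotonicity argument must be applied coordinatewise with care. Organising the resulting case split so that each surviving signed graph is matched to exactly one maximal target, with neither overlap nor omission, is where the bulk of the careful (and largely computational) work lies; the underlying eigenvector recurrences, though lengthy, are routine once the recurrence and boundary conditions are fixed.
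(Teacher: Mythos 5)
Your overall engine---writing the characteristic polynomial of each $\dot{\mathcal{A}}_i$ in closed form via the Chebyshev recurrence $y_j=\alpha\mu^j+\beta\mu^{-j}$, $\mu+\mu^{-1}=x$, evaluating at $\lambda^\ast$, and then doing sign/threshold analysis in the parameters---is essentially the same computation the paper performs with $p_n(x)=\frac{\theta^{2n+2}-1}{\theta^{n+2}-\theta^n}$ and the Gill--Acharya recurrence (Lemma \ref{l4-12}, Eqs.\ (\ref{e4.2})--(\ref{e4.5}), Table 3). But there are two genuine gaps. First, your organizing dichotomy rests on the claim that $\rho(\dot{\mathcal{A}}_i)$ is \emph{strictly increasing in each path length}, with bounded parameters where the limit exceeds $\lambda^\ast$ and unbounded ones otherwise. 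That is true only for pendant paths. In these families the decisive parameters (e.g.\ $n_3$ in $\dot{\mathcal{A}}_2^{n_1,n_2,n_3}$ and $\dot{\mathcal{A}}_3^{n_1,n_2,n_3}$) are \emph{internal} connecting paths, along which the spectral radius \emph{decreases} as the path lengthens: the paper finds, for instance, $\phi_{\dot{\mathcal{A}}_3^{0,0,3}}(\lambda^\ast)<0$ but $\phi_{\dot{\mathcal{A}}_3^{0,0,4}}(\lambda^\ast)>0$, so the content of the lemma is a family of \emph{lower} bounds such as $n_3\ge n_1+n_2+2$. With your stated monotonicity the case analysis would run in the wrong direction, and a corrected version would require a signed-graph analogue of the Hoffman--Smith internal-path lemma that you neither state nor prove.

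Second, even granting the closed-form evaluation of $\phi_{\dot{G}}(\lambda^\ast)$, its sign alone does not decide whether $\rho(\dot{G})\le\lambda^\ast$: one could a priori have two eigenvalues above $\lambda^\ast$ and still get $\phi_{\dot{G}}(\lambda^\ast)>0$. The paper closes this with a small but essential step you omit: each $\dot{G}$ in the lemma contains an induced subgraph of order $n-1$ already known to have largest eigenvalue below $\lambda^\ast$, so interlacing gives $\lambda_2(\dot{G})<\lambda^\ast$ and the sign of $\phi_{\dot{G}}(\lambda^\ast)$ then determines the position of $\lambda_1$ (bipartiteness handling $-\lambda_n$). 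Your alternative of tracking $\rho$ directly through the ``joint transcendental condition'' could in principle replace this, but you give no device for certifying which root of that equation is the largest, so as written the proposal does not yield the conclusion. The forbidden-subgraph bookkeeping and the final explicit embeddings you describe match the paper's treatment and are fine.
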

\begin{remark}
Each signed graph in Lemma \ref{l3.5} is one of the signed graphs of  Theorem \ref{t2.4} $(v)$ or an induced subgraph of the signed graphs of Theorem \ref{t2.4}  $(iv).$
\end{remark}

  Now we pay attention to the uncyclic and bicyclic signed graphs of the set $\mathcal{G}_S^{\lambda^\ast}$.
 \begin{lemma}\label{u1}
Let $\dot{G}=(G,\sigma)\in \overline{\mathcal{G}}_S^{\lambda^\ast}$ $(G\ne C_n)$ be a  uncyclic signed graph with a $k$-cycle. Then   $\dot{G}$  $\subset\dot{\mathcal{C}}_k^{1,\frac{k}{2}+1}, \dot{\mathcal{C}}_{10}^{1,5},\dot{\mathcal{C}}_{8}^{1,4},\dot{G}_8^0,\dot{U}_6,\dot{U}_{6}^{n_1,n_2},
\dot{\mathcal{A}}_{1}^{n_1,n_2,n_3,n_4},
\dot{\mathcal{A}}_2^{n_1,n_2,n_3},\dot{\mathcal{A}}_{4}^{n_1,n_2},\dot{\mathcal{A}}_{14}^{n_1,n_2},\dot{\mathcal{C}}_4[2,2,2,2]$,
$\dot{\mathcal{C}}_4[4,2,1,0]$,  $\dot{\mathcal{C}}_4[2,3,1,0]$, $\dot{\mathcal{C}}_4[5,3,0,0]$, $\dot{\mathcal{C}}_4[n_1,1,n_3,1]$.  See Figs. \ref{main}, \ref{main-2}, \ref{q5} and \ref{C4-1234-1}.
\end{lemma}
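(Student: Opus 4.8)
The plan is to exploit that $\dot{G}$ is unbalanced with a unique cycle, so that cycle must be negative; after a switching we may assume it carries exactly one negative edge and write it as $\dot{C}_k^-$. Since $\dot{G}$ is $\dot{C}_3$-free (Lemma \ref{c3}) we have $k\ge 4$, and since $\Delta(\dot{G})\le 4$ (Lemma \ref{l3.4}) each of the $k$ cycle vertices carries at most two further tree-edges. Writing $\dot{G}=\dot{C}_k^-\cup\bigl(\bigcup_i T_i\bigr)$, where $T_i$ is the (possibly trivial) tree rooted at the $i$-th cycle vertex, the task becomes to determine for which $k$, which root positions, and which tree shapes the spectral radius stays in $(2,\lambda^\ast]$. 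Because $G\ne C_n$, at least one $T_i$ is nontrivial.

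First I would bound the tree shapes. Any induced subtree of $\dot{G}$ is a forest, hence balanced and switching-equivalent to an unsigned tree, so by interlacing (Lemma \ref{Lem2.5}) its spectral radius is at most $\rho(\dot{G})\le\lambda^\ast$, and Lemma \ref{lem-2.7} forces it into $\mathcal{G}^{\lambda^\ast}$; this restricts every $T_i$ to a path or one of the small admissible trees $T_{a,b,c}$, $Q_{a,b,c}$ of Theorem \ref{thm1}, and together with $\Delta\le4$ limits how branches emanate from the cycle. Next I would bound the number and relative position of the nontrivial trees: attaching pendant material at two cycle vertices that are too close, or at three or more vertices, creates an induced copy of one of the forbidden signed graphs $\dot{F}_1,\dots,\dot{F}_{11}$ (or of a forbidden unsigned graph), which again by interlacing would force $\rho(\dot{G})>\lambda^\ast$. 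This is the step that pins pendants to antipodal positions and yields the family $\dot{\mathcal{C}}_k^{1,\frac{k}{2}+1}$ for even $k$.

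With shapes and positions controlled I would run the case analysis by cycle length. For $k=4$ the cycle is $\dot{C}_4^-$, and enumerating the admissible attachments (ruling out bad ones by the $\dot{F}_i$ and by unsigned-forbidden subgraphs) produces the sporadic graphs $\dot{\mathcal{C}}_4[2,2,2,2]$, $\dot{\mathcal{C}}_4[4,2,1,0]$, $\dot{\mathcal{C}}_4[2,3,1,0]$, $\dot{\mathcal{C}}_4[5,3,0,0]$, the family $\dot{\mathcal{C}}_4[n_1,1,n_3,1]$, and $\dot{U}_6$, $\dot{U}_6^{n_1,n_2}$. For $k=8,10$ the finitely many admissible attachments give $\dot{G}_8^0$, $\dot{\mathcal{C}}_8^{1,4}$ and $\dot{\mathcal{C}}_{10}^{1,5}$, while for general even $k$ the monotone growth of $\rho$ as the cycle or the attached paths lengthen shows that the maximal admissible graphs are exactly the parametric families $\dot{\mathcal{A}}_1^{n_1,n_2,n_3,n_4}$, $\dot{\mathcal{A}}_2^{n_1,n_2,n_3}$, $\dot{\mathcal{A}}_4^{n_1,n_2}$ and $\dot{\mathcal{A}}_{14}^{n_1,n_2}$ of Lemma \ref{l3.5}; every admissible $\dot{G}$ then sits inside one of the listed graphs as an induced subgraph.

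The hard part is the bookkeeping in this last step. Producing each forbidden configuration is routine interlacing, but organizing the enumeration so that no admissible attachment is missed, and so that each surviving graph is correctly recognized as an induced subgraph of one entry of the stated list rather than a genuinely new maximal graph, is delicate, especially where two parameters (cycle length and path length) grow simultaneously and one must check that $\rho$ stays below $\lambda^\ast$ uniformly. I expect to lean on Lemma \ref{l3.5}, which already packages precisely these two-parameter unicyclic families $\dot{\mathcal{A}}_i$ into their required ambient graphs, so that the present lemma reduces to matching each case either to some $\dot{\mathcal{A}}_i$ or to one of the sporadic graphs.
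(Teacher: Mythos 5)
Your overall strategy --- decompose $\dot{G}$ into the (necessarily negative) cycle plus trees rooted at cycle vertices, restrict the tree shapes via Theorem \ref{thm1} and interlacing, kill bad placements with the forbidden graphs of Lemma \ref{lem-2.7}, then enumerate by cycle length and hand the surviving parametric families to Lemma \ref{l3.5} --- is exactly the paper's. Two small points you leave implicit: evenness of $k$ is not automatic from unbalancedness but follows because $G\ne C_n$ forces a pendant edge and $\dot{\mathcal{C}}_{2\ell+1}^1$ is forbidden; and the paper also needs $\dot{F}_1$ to cap the cycle-vertex degrees at $3$ when $k\ge 6$.

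The real problem is that your case analysis misplaces several of the target families, so the enumeration as planned would not produce the stated list. The families $\dot{\mathcal{A}}_1^{n_1,n_2,n_3,n_4}$, $\dot{\mathcal{A}}_2^{n_1,n_2,n_3}$, $\dot{\mathcal{A}}_4^{n_1,n_2}$ and $\dot{\mathcal{A}}_{14}^{n_1,n_2}$ all live on a $4$-cycle and arise only in the $k=4$ case (in the paper they are the subcases $n_2=0,1,2$ of an attachment vertex of degree $3$ off the cycle, and the $d_{v_1}=4$ subcase); they do not occur for ``general even $k$''. Conversely $\dot{U}_6$ and $\dot{U}_6^{n_1,n_2}$ live on a $6$-cycle, not a $4$-cycle, and your plan never treats $k=6$ or $k=12$ at all (the latter contributes $\dot{\mathcal{C}}_{12}^{1,7}$, absorbed into $\dot{\mathcal{C}}_k^{1,\frac{k}{2}+1}$). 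For $k\ge 14$ the only survivor is $\dot{\mathcal{C}}_k^{1,\frac{k}{2}+1}$, which the paper obtains by exhibiting forbidden $Q_{2,i-1,\frac{k}{2}-3}$ and $T_{2,3,4}$, not by a monotonicity argument. Finally, your guiding principle that attachments at three or more cycle vertices always create a forbidden subgraph is false for $k=4$: both $\dot{\mathcal{C}}_4[2,2,2,2]$ and $\dot{\mathcal{C}}_4[n_1,1,n_3,1]$ carry pendant paths at all four cycle vertices. These are errors of bookkeeping rather than of method, but as written the plan does not close.
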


\begin{proof}Let $V(\dot{C}_k)=\{v_1,v_2,\dots,v_k\}$ and let 
  $d_{v_1}\ge 3$ and $u_1$ be  the new neighbor of $v_1.$ By  forbidding $\dot{C}_\ell^1$ and $\dot{\mathcal{C}}_{2\ell+1}^1$, then the cycle $\dot{C}_k$ is unbalanced and $k$ is even. In addition, by forbidding $\dot{F}_1$,  if $k\ge 6,$ then $d_{v_i}\le 3$ for all $i=1,\dots,k.$

\textbf{Case 1.} $k\ge 14.$ Then $d_{v_{i}}=2$ for  $i=2,3,\dots,\frac{k}{2}$, otherwise
$d_{v_{i}}\ge 3$ and let $u_i\sim v_i$, then  $Q_{2,i-1,\frac{k}{2}-3}\subset \dot{G}-v_{i+3}$ and $\rho(\dot{G})\ge \rho(Q_{2,i-1,\frac{k}{2}-3})>\lambda^\ast$, which is a contradiction. Because of symmetry, we have  $d_{v_{j}}=2$ for $j=\frac{k}{2}+2,\dots,k.$ 
If $d_{u_1}\ge 2$, let $u_2\mathop{\sim}\limits^{+} u_1,$ then $\{v_1,v_2,v_3,v_4,v_5,v_{k},v_{k-1},v_{k-2},u_1,u_2\}$ induces the  $T_{2,3,4}$ and 
$\rho(\dot{G})\ge\rho(T_{2,3,4})>\lambda^\ast$, which is a contradiction. So $d_{u_1}=1.$
Therefore, $\dot{G}\subset \dot{\mathcal{C}}_k^{1,\frac{k}{2}+1}$.

\textbf{Case 2.}  $k=10$ or 12. Then $d_{u_1}=1$, otherwise $d_{u_1}\ge 2$ and let $u_2\mathop{\sim}\limits^{+} u_1,$ then $\{v_1,v_2,v_3,v_4,v_5,$ $v_{k}, v_{k-1},v_{k-2},u_1,u_2\}$ induces the  $T_{2,3,4}$ and 
$\rho(\dot{G})\ge \rho(T_{2,3,4})>\lambda^\ast$, which is a contradiction. So  all  vertices of $V(\dot{G})\setminus V(\dot{C}_k)$ are within distance 1 of the cycle $\dot{C}_k.$ By  directed calculations,  it is not hard to get that $\dot{G}\sim  \dot{\mathcal{C}}_{10}^{1,5}$, $\dot{\mathcal{C}}_{10}^{1,6}, \dot{\mathcal{C}}_{12}^{1,7}$.

\textbf{Case 3.}  $k=8$ or $6.$ 
If all  vertices of $V(\dot{G})\setminus V(\dot{C}_k)$ are within distance 1 of the cycle $\dot{C}_k,$
a directed calculation leads that
 $\dot{G}\subset \dot{\mathcal{C}}_{6}^{1,4},\dot{\mathcal{C}}_{8}^{1,4},\dot{\mathcal{C}}_{8}^{1,5}$ or $\dot{U}_6.$
Otherwise, we
 assume that $d_{u_1}\ge 2$ and let $u_2$  be  the new neighbor of $u_1.$
If $k=8,$ by forbidding $T_{3,3,3}$ and $\dot{F}_2$, then $d_{u_2}=1$ and $d_{u_1}=d_{v_i}=2$ for $i=2,3,4,6,7,8,$ respectively. So $\dot{G}\subset \dot{G}_8^0$.
If $k=6,$  then $d_{v_2}=d_{v_6}=2$ (by  forbidding $\dot{F}_2$).
If $d_{v_3}=3$ or $d_{v_5}=3,$
 let $u_3$ (resp. $u_5$) be the new neighbor of $v_3$ (resp. $v_5$), by  forbidding $\dot{F}_2$, $Q_{1,2,3},$ $Q_{2,2,2}$,  $\dot{F}_1$ and $Q_{1,1,4},$   we have $d_{u_1}=2$, $d_{u_2}=d_{u_3}=d_{u_5}=1$  and $d_{v_4}=2$.  Then $\dot{G}\subset \dot{U}_6.$
 See Fig. \ref{C4-1234-1}.
Next we consider that $d_{v_3}=d_{v_5}=2,$ then $\dot{U}_6^{n_1,n_2}\subset \dot{G}.$ See Fig. \ref{main-2}.
Since $\dot{G}$ is $\dot{F}_2$-free, then each vertex of  $\dot{U}_6^{n_1,n_2}$ expect the vertices of $V(\dot{C}_6)$ has degree at most 2.
Hence,  $\dot{G}\sim \dot{U}_6^{n_1,n_2}.$
\begin{figure}
\begin{center}
  \includegraphics[width=14cm,height=3.25cm]{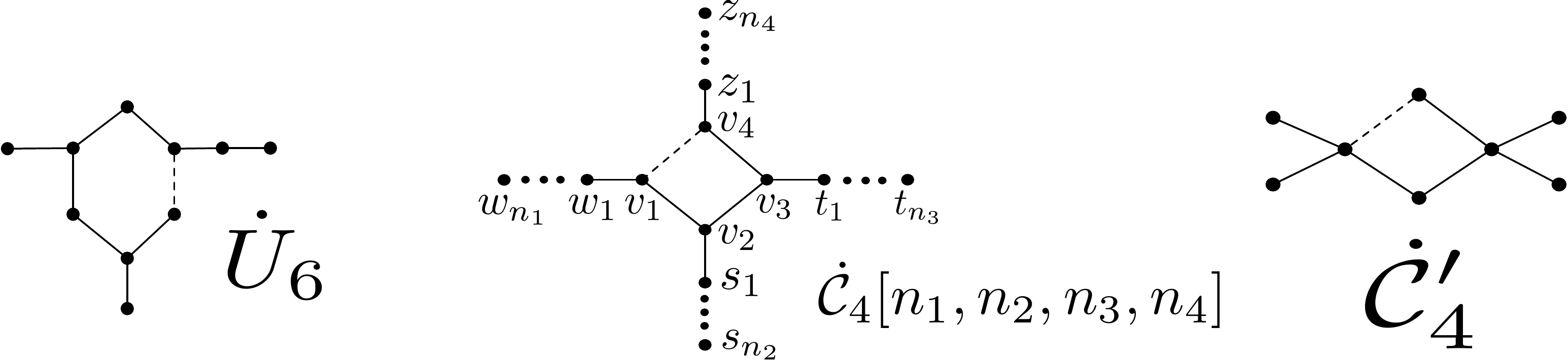}
  \end{center}
   \vskip -0.6cm\caption{ The signed graphs  $\dot{U}_6$, $\dot{\mathcal{C}}_4[n_1,n_2,n_3,n_4]$ and $\dot{\mathcal{C}}_4^\prime$.}
  \label{C4-1234-1}
\end{figure}

 \textbf{Case 4.} $k=4.$ Then $\dot{\mathcal{C}}_4[n_1,n_2,n_3,n_4]\subset \dot{G}$.  Let $n_4=min\{n_1,n_2,n_3,n_4\}.$
If $d_{v_1}=4,$ then $n_1=1,$ $d_{v_2}=d_{v_4}=2$ (by forbidding $\dot{F}_1$) and at most one vertex of $\{t_1,\dots,t_{n_3}\}$ has degree 3  (by  forbidding $\dot{F}_3$). So, $\dot{G}\sim\dot{\mathcal{C}}_4^\prime$ if $d_{v_3}=4$ (where $\rho(\dot{\mathcal{C}}_4^\prime)=2$) or $\dot{G}\subset \dot{\mathcal{A}}_{14}^{n_1,n_2}$ if $d_{v_3}\le3$. See Figs. \ref{q5} and \ref{C4-1234-1}. Next suppose  that $d_{v_i}\le 3$ for $i=1,2,3,4.$
If there is one vertex expect $v_1,v_2,v_3,v_4$ having  degree greater than 2,
without loss of generality, assume that $d_{w_i}\ge 3$ ($1\le i\le n_1-1).$
By forbidding $\dot{F}_1$, $\dot{F}_2$ and $\dot{F}_3$,  then $d_{w_i}=3$ and at most one vertex of 
$V(\dot{G})\setminus \{v_1,v_3,w_1,\dots,w_{n_1}\}$ has degree 3.
If $n_2\ge 3,$ then $\rho_1(\dot{G})\ge \lambda_1(\dot{F}_8)>\lambda^\ast$ (if $n_1\ge 6$),
$\rho_1(\dot{G})\ge \lambda_1(\dot{F}_5)>\lambda^\ast$ (if $n_1=5$) and $\rho_1(\dot{G})\ge \lambda_1(Q_{1,3,4})>\lambda^\ast$ (if $2\le n_1\le 4$), which is a contradiction. So $n_2\le 2.$
Furthermore, if $n_2=2,$ then $n_3=0$ (by forbidding $\dot{F}_2$).
Therefore, $\dot{G}\subset \dot{\mathcal{A}}_{1}^{n_1,n_2,n_3,n_4}$ (if $n_2=0$), $\dot{G}\subset\dot{\mathcal{A}}_2^{n_1,n_2,n_3}$ (if $n_2=1$) or $\dot{G}\subset\dot{\mathcal{A}}_4^{n_1,n_2}$ (if $n_2=2$).
 If all vertices expect $v_1,v_2,v_3,v_4$ have degree at most 2, then $\dot{G}$ is $\dot{\mathcal{C}}_4[n_1,n_2,n_3,n_4].$ If $n_i\ge 2$ for $i=1,2,3$, then $n_1=n_2=n_3=2$  by forbidding $T_{3,3,3}$ and $T_{2,3,4}$. So $\dot{G}\subset\dot{\mathcal{C}}_4[2,2,2,2]$. Otherwise,  $n_2\le 1$ or $n_3\le1.$
 If $n_2\le 1,$  then $\dot{G}\subset \dot{\mathcal{C}}_4[n_1,1,n_3,1].$
 If $n_3\le 1$ and $n_2\ge 2,$  by forbidding $\dot{F}_4,$ $\dot{F}_6,$ $\dot{F}_7,$ $\dot{F}_8$ and $T_{2,3,4}$, we have $\dot{G}\subset \dot{\mathcal{C}}_4[n_1,n_2,n_3,n_4],$ where $(n_1,n_2,n_3,n_4)\in \{(2,2,2,2),(4,2,1,0),(2,3,1,0),(5,3,0,0),(n_1,2,0,0)\}.$
  \end{proof}

 It is known that there are three types of bicyclic graphs in term of their base graph as described next.
A bicyclic graph  is said to be a \emph{bicyclic   base graph} if contains no pendent vertices.

The type $\theta_{p,q,r}$ is the union of three internally disjoint paths $P_{p+2},$ $P_{q+2}$,
and $P_{r+2}$ which have the same two distinct end vertices, where $p\geq q \geq
r\geq 0.$

The type $B_r^{a,b}$ consists of two
vertex disjoint cycles $C_a$ and $C_b$ joined by a path $P_r$ having only its end vertices in
common with the cycles, where $a\geq 3,$ $b \geq3$ and
$r\geq 2.$

The type $B^{a,b}_0$ is the union of two cycles $C_a$ and $C_b$ with precisely one vertex
in common, where $a\geq 3$ and $b \geq 3.$

\begin{lemma}\label{base}
  Let $\dot{G}\in \mathcal{G}_S^{\lambda^\ast}$ be a  bicyclic  signed base graph. Then $\dot{G}$  is switching equivalent to one of the $\dot{\mathcal{B}}_0^{4,4},$ $\dot{\mathcal{B}}^{4,4}_r,$ $\dot{\Theta}_{3,3,1}$, $\dot{\Theta}_{p,1,1}$, $\dot{\Theta}_{2,2,0},$ $\dot{\Theta}_{4,2,0},$ $\dot{\Theta}_{6,2,0}$ or $\dot{\Theta}_{8,2,0}.$  See Fig. \ref{11}.
 \end{lemma}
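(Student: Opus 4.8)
The plan is to exploit the rigidity of a pendant-free graph---which forces a very restricted degree sequence---to pin down the possible shapes of each of the three base types using the forbidden-subgraph catalogue, and then to confirm the spectral bound on the finitely many surviving shapes, treating the infinite families by a separate uniform estimate. First I would record the degrees forced by pendant-freeness: in $\theta_{p,q,r}$ and in $B_r^{a,b}$ every vertex has degree $2$ except two vertices of degree $3$, whereas in $B_0^{a,b}$ all vertices have degree $2$ except the shared vertex, whose degree is exactly $4$. In the latter case the bound $\Delta(\dot G)\le 4$ of Lemma~\ref{l3.4} is already attained, so nothing may be appended there, and in the other two types no vertex of degree $\ge 5$ can arise either; hence the whole graph is determined by its path and cycle lengths.

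Next I would dispose of the balanced case outright. By Theorem~\ref{thm1}$(ii)$ the set $\mathcal G^{\lambda^\ast}$ consists only of cycles $C_n$ and of trees (the paths $P_n$, the star $K_{1,4}$, and the $T_{a,b,c}$ and $Q_{a,b,c}$ families), so it contains no bicyclic graph. Since a balanced signed graph is switching equivalent to, and therefore cospectral with, its underlying graph, no balanced bicyclic base graph can satisfy $\rho\le\lambda^\ast$; thus $\dot G$ is unbalanced and at least one of its two cycles is negative. I also note that, by interlacing (Lemma~\ref{Lem2.5}), any induced subgraph of $\dot G$ has spectral radius at most $\lambda^\ast$, so none of the forbidden graphs of Lemma~\ref{lem-2.7}---all unsigned graphs outside $\mathcal G^{\lambda^\ast}$ together with $\dot F_1$--$\dot F_{11}$---may appear as an induced subgraph, a restriction valid for every $\dot G\in\mathcal G_S^{\lambda^\ast}$.

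I would then bound the parameters type by type. For $\theta_{p,q,r}$, deleting one edge of a cycle exposes an induced tree carrying a $T_{a,b,c}$ (or, after a further deletion, a $Q_{a,b,c}$) whose arms grow with $p,q,r$; since such trees leave $\mathcal G^{\lambda^\ast}$ as soon as two arms become long, interlacing forces $q$ and $r$ to be small, and forbidding $\dot F_1$--$\dot F_{11}$ then reduces the survivors to $\dot\Theta_{3,3,1}$, the finitely many $\dot\Theta_{2k,2,0}$ up to $\dot\Theta_{8,2,0}$, and the family $\dot\Theta_{p,1,1}$. For $B_0^{a,b}$ and $B_r^{a,b}$, triangle-freeness (Lemma~\ref{c3}, applicable once $\rho>2$, while the $\rho\le 2$ members are recovered below) gives $a,b\ge 4$, and attaching the second cycle at a degree-$3$ or degree-$4$ vertex produces a forbidden $\dot F_i$ or an oversized $Q_{a,b,c}$ unless both cycles are $C_4$, isolating the shapes $\dot{\mathcal B}_0^{4,4}$ and $\dot{\mathcal B}_r^{4,4}$. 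Once a shape is fixed its cycle signs are determined up to switching by balance, so I would fix a canonical representative (one negative $C_4$, all other edges positive) before computing.

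It remains to confirm $\rho\le\lambda^\ast$ on the surviving shapes, and here the main obstacle is the infinite families $\dot\Theta_{p,1,1}$ and $\dot{\mathcal B}_r^{4,4}$, which cannot be settled by checking finitely many graphs. For the members with $\rho\le 2$ I would invoke Theorem~\ref{thm2.2}: such $\dot G$ are induced subgraphs of $\dot S_{14}$, $\dot S_{16}$ or $\dot T_{2k}$, and reading off their pendant-free bicyclic pieces accounts for the whole $\rho=2$ part of the list. For the remaining borderline graphs one needs a bound uniform in the unbounded parameter; the cleanest route is a transfer-matrix computation, expressing the characteristic polynomial of the cycle-plus-path as a Chebyshev-type recurrence in the path length and checking the sign of its value at $\lambda^\ast$, which shows that lengthening the path keeps $\rho$ on the admissible side and bounds all finite members simultaneously. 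The two exact borderline graphs $\dot\Theta_{8,2,0}$ and $\dot{\mathcal B}_0^{4,4}$ then require only a single explicit evaluation of $\det(\lambda^\ast I-A(\dot G))$ to place them correctly.
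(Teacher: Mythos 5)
There are two genuine gaps. First, the conclusion of Lemma~\ref{base} is a list of specific signed graphs up to switching, not merely a list of underlying shapes, and your assertion that ``once a shape is fixed its cycle signs are determined up to switching by balance'' is false for bicyclic base graphs. On $\theta_{p,q,r}$ the three cycle signs multiply to $+1$, so an unbalanced signature has exactly one positive cycle among the three, and different choices of which cycle is positive give non-switching-equivalent (and non-cospectral) signed graphs --- e.g.\ on $\theta_{2,2,0}$ the class with both $4$-cycles negative is not the class with one $4$-cycle positive and the $6$-cycle negative. Your proposal establishes only that \emph{at least one} cycle is negative and never identifies which, so it does not actually pin down the graphs $\dot{\Theta}_{p,q,1}$, $\dot{\Theta}_{p,q,0}$, $\dot{\mathcal{B}}_0^{4,4}$, $\dot{\mathcal{B}}_r^{4,4}$ of Fig.~\ref{11}. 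The paper's mechanism is precisely the forbidden induced subgraphs $\dot{C}_\ell^1$ and $\dot{\mathcal{C}}_{2\ell+1}^1$: every cycle of the base graph that acquires an induced pendant vertex (from the other cycle, the connecting path, or the third branch) must be unbalanced and even, which in one stroke forces both cycles of the dumbbell types to be negative $4$-cycles, forces $r\le 1$ in the theta case, and yields the parity constraints on $p$ and $q$. Your proposed canonical representative for the dumbbells, ``one negative $C_4$, all other edges positive,'' is in fact forbidden, since the balanced $C_4$ together with the first vertex of the connecting path induces $\dot{C}_4^1$.

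Second, the step ``deleting one edge of a cycle exposes an induced tree carrying a $T_{a,b,c}$ \dots interlacing forces $q$ and $r$ to be small'' fails as written: an edge-deleted subgraph is not an induced subgraph, so Lemma~\ref{Lem2.5} does not apply to it, and for signed graphs (whose adjacency matrices are not nonnegative) removing an edge need not decrease the spectral radius, so there is no Perron--Frobenius monotonicity to fall back on. The forbidden trees must be obtained by deleting \emph{vertices}, as the paper does when it locates induced copies of $Q_{2,2,2}$, $Q_{2,1,2}$ and $\dot{F}_4$ inside $\dot{\Theta}_{p,q,1}$ and $\dot{\Theta}_{p,q,0}$; this is repairable but is not what you wrote. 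Finally, your last paragraph (a uniform verification that the surviving shapes, including the infinite families $\dot{\Theta}_{p,1,1}$ and $\dot{\mathcal{B}}_r^{4,4}$, satisfy $\rho\le\lambda^\ast$) is not required: the lemma asserts only the forward implication for graphs already assumed to lie in $\mathcal{G}_S^{\lambda^\ast}$.
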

\begin{proof}
For types $B_0^{a,b}$ and $B^{a,b}_r$,
by forbidding $\dot{C}_{\ell}^1$, $\dot{\mathcal{C}}_{2\ell+1}^{1}$, $\dot{F}_{1}$ and $\dot{F}_{2},$ then $\sigma(\dot{C}_a)=\sigma(\dot{C}_b)=-1$ and  $a=b=4$.
 So $\dot{G}\sim \dot{\mathcal{B}}_0^{4,4}$ or $\dot{G}\sim\dot{\mathcal{B}}^{4,4}_r.$
For  type $\theta_{p,q,r},$ by forbidding $\dot{C}_{\ell}^{1}$ and $\dot{\mathcal{C}}_{2\ell+1}^{1}$, then $\dot{G}\sim \dot{\Theta}_{p,q,1}$ (if $q>1$, then    $p$ and $q$ are odd) or $\dot{G}\sim \dot{\Theta}_{p,q,0}$ ($p$ and $q$ are even).

\textbf{Case 1.} $\dot{G}\sim \dot{\Theta}_{p,q,1}$. If   $q\ge 3$ and  $p\ge 5$,
then $Q_{2,2,2}\subset \dot{G}$, which contradicts to Lemma \ref{lem-2.7}. 
Therefore, $p=q=3$ or $q=1$. 

\textbf{Case 2.} $\dot{G}\sim \dot{\Theta}_{p,q,0}$. If  $q\ge 4$ (resp. $p\ge 10$), then 
$Q_{2,1,2}\subset \dot{G}$ (resp. $\dot{F}_4\subset \dot{G}$), which contradicts to Lemma \ref{lem-2.7}. Therefore, $q=2$ and $p\in \{2,4,6,8\}.$

Hence, $\dot{G}\sim \dot{\Theta}_{3,3,1}$, $\dot{\Theta}_{p,1,1}$, $\dot{\Theta}_{2,2,0},$ $\dot{\Theta}_{4,2,0},$ $\dot{\Theta}_{6,2,0}$ or $\dot{\Theta}_{8,2,0}.$ \end{proof}
\begin{figure}
\begin{center}
  \includegraphics[width=14cm,height=2cm]{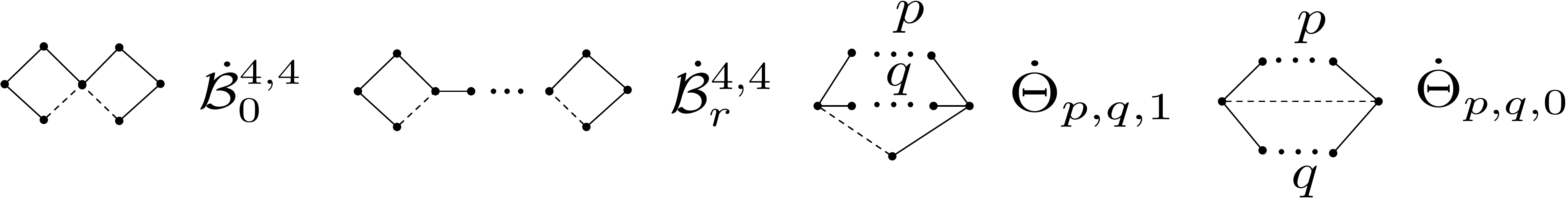}
  \end{center}
   \vskip -0.8cm\caption{The signed graphs $\dot{\mathcal{B}}_0^{4,4},$ $\dot{\mathcal{B}}^{4,4}_r,$ $\dot{\Theta}_{p,q,1}$ and $\dot{\Theta}_{p,q,0}$.}
  \label{11}
\end{figure}

\begin{figure}
\begin{center}
  \includegraphics[width=14cm,height=4cm]{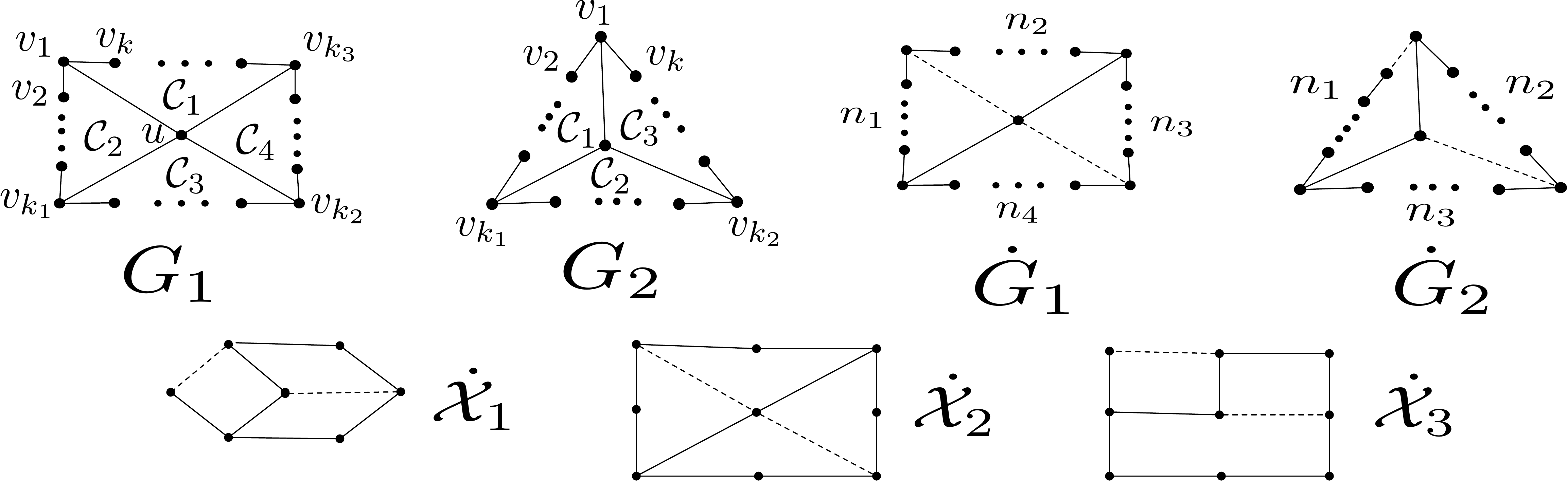}
  \end{center}
   \vskip -0.7cm\caption{The  graphs  $G_1,G_2$ and the signed graphs $\dot{G}_1,\dot{G}_2, \dot{\mathcal{X}}_1,\dot{\mathcal{X}}_2$ and $\dot{\mathcal{X}}_3$.}
  \label{33}
\end{figure}

Let $\dot{G}\in \mathcal{G}_S^{\lambda^\ast}$ be a signed graph with $m\ge n+1$ and  $\dot{C}_k$ be a cycle in $\dot{G}.$
If each vertex outside of $\dot{C}_k$ is adjacent to at most one vertex of $\dot{C}_k$,   then $\dot{C}_k\subset $
$\dot{\mathcal{B}}_r^{4,4}$ or $\dot{C}_k\subset\dot{\mathcal{B}}^{4,4}_0$ (by Lemma \ref{base}). So $k=4$.
Therefore, if $k\ge 5,$ then there is a vertex outside of $\dot{C}_k$ adjacent to at least two vertices of $\dot{C}_k.$ Then we have
\begin{lemma}\label{c1}
Let $\dot{G}=(G,\sigma)$ be a signed graph obtained by a cycle  $\dot{C}_k$ and a vertex $u$ not in $\dot{C}_k$ such that  $u$ is  adjacent to at least two  vertices of   $\dot{C}_k$.
If $\rho(\dot{G})\le\lambda^\ast,$
then $\dot{G}$  is switching equivalent to one of the $\dot{\Theta}_{3,3,1}, \dot{\Theta}_{k-3,1,1}, \dot{\mathcal{X}}_1$,    $\dot{\mathcal{X}}_2$ or $\dot{\mathcal{X}}_3.$
 See  Figs. \ref{11} and \ref{33}.
\end{lemma}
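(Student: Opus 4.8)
The plan is to organise the argument according to the number $d_u$ of neighbours of $u$ on the cycle. Since $u$ has at least two neighbours and $\Delta(\dot{G})\le 4$ by Lemma \ref{l3.4}, we have $d_u\in\{2,3,4\}$. Moreover, by Lemma \ref{c3} the graph $\dot{G}$ is $\dot{C}_3$-free, so no two neighbours of $u$ can be consecutive on $\dot{C}_k$; hence each of the $d_u$ arcs into which the neighbours of $u$ split the cycle contains at least one internal vertex. Throughout I would use that a bicyclic (or larger) signed base graph with $\rho\le\lambda^\ast$ cannot be balanced, since no bicyclic base graph occurs in the list $\mathcal{G}^{\lambda^\ast}$ of Theorem \ref{thm1}; thus every relevant cycle of $\dot{G}$ is forced to be unbalanced, and this is what pins down the signs up to switching.

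First I would treat $d_u=2$. Here $u$ is the unique internal vertex of a length-$2$ path joining its two neighbours $v_i,v_j$, and since $v_i,v_j$ are non-adjacent the two cycle arcs each carry at least one internal vertex. Thus $\dot{G}$ is a bicyclic signed base graph of theta type in which one of the three internally disjoint paths has exactly one internal vertex. Among the admissible theta graphs listed in Lemma \ref{base}, the only ones possessing a path with a single internal vertex are $\dot{\Theta}_{3,3,1}$ and $\dot{\Theta}_{p,1,1}$. Reading off the arc lengths (writing $\dot{C}_k$ as the cycle avoiding the $u$-path, so that its two arcs have $p$ and $1$ internal vertices with $p+3=k$) gives $\dot{G}\simeq\dot{\Theta}_{3,3,1}$ (the case $k=8$) or $\dot{G}\simeq\dot{\Theta}_{k-3,1,1}$, with the signature supplied by Lemma \ref{base}.

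Next, for $d_u\in\{3,4\}$, the idea is to bound every arc, and hence $k$, by exposing theta subgraphs. For any two neighbours of $u$ joined consecutively around the cycle by an arc with $a$ internal vertices and the next arc with $b$ internal vertices, the vertex $u$, the two corresponding spokes, and the two arcs induce a signed theta graph whose third path is the shared spoke-edge, i.e.\ an induced signed theta of shape $\theta_{a+1,b+1,0}$. By Lemma \ref{base} such a subgraph has $\rho\le\lambda^\ast$ only if it is switching equivalent to one of $\dot{\Theta}_{2,2,0},\dot{\Theta}_{4,2,0},\dot{\Theta}_{6,2,0},\dot{\Theta}_{8,2,0}$, that is $\{a+1,b+1\}\in\{\{2,2\},\{2,4\},\{2,6\},\{2,8\}\}$, forcing $\min(a,b)=1$ and $\max(a,b)\in\{1,3,5,7\}$. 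Applying this to every cyclically consecutive pair of arcs shows that no two consecutive arcs exceed $1$ and that each arc has at most $7$ internal vertices; consequently $k$ is bounded (for $d_u=3$ one gets $k\le 12$, with arc pattern $(1,1,c)$, $c\in\{1,3,5,7\}$, and correspondingly few patterns for $d_u=4$), leaving only finitely many underlying graphs.

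Finally I would dispose of these finitely many candidates directly. For each admissible underlying graph I would normalise the signature by switching, use the requirement that all short cycles through $u$ be unbalanced (to avoid the forbidden $\dot{C}_\ell^1$, $\dot{\mathcal{C}}_{2\ell+1}^1$ and the signed graphs $\dot{F}_1,\dots,\dot{F}_{11}$ of Fig.\ \ref{f1}), and then compute the spectral radius to decide membership in $\mathcal{G}_S^{\lambda^\ast}$; the survivors are exactly $\dot{\mathcal{X}}_1,\dot{\mathcal{X}}_2,\dot{\mathcal{X}}_3$. I expect the main obstacle to be this last, finite but delicate, verification: several arc patterns that survive the theta bound (such as $(1,1,5)$ or $(1,1,7)$ when $d_u=3$, and the longer $d_u=4$ patterns) must still be eliminated by the full-graph spectral radius, and one must confirm that no two survivors are switching isomorphic, so that the final list $\dot{\Theta}_{3,3,1},\dot{\Theta}_{k-3,1,1},\dot{\mathcal{X}}_1,\dot{\mathcal{X}}_2,\dot{\mathcal{X}}_3$ is both complete and irredundant.
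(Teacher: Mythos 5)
Your proposal is correct and lands on the same classification, but for the case $d_u\in\{3,4\}$ it takes a genuinely different route from the paper. The paper first fixes the underlying graph ($G_1$ or $G_2$ of Fig.~\ref{33}), then runs a sign-product argument on the $d_u$ short cycles through $u$ (each must be even and unbalanced, and their signs multiply to $\sigma(\dot{C}_k)$) to exclude the combinations $(\sigma(\dot{C}_k),d_{\dot{C}_k}(u))=(+1,3)$ and $(-1,4)$, and finally bounds the arc lengths by exhibiting specific forbidden subgraphs ($\dot{F}_1$, $\dot{F}_2$, $Q_{2,2,2}$, $\dot{\mathcal{C}}_{2\ell+1}^1$). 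You instead interlace on the induced theta $\theta_{a+1,b+1,0}$ spanned by $u$ and each consecutive pair of arcs and feed it back into Lemma~\ref{base}; this one step simultaneously yields $\min(a,b)=1$, $\max(a,b)\in\{1,3,5,7\}$ and the unbalancedness of every short cycle through $u$ (hence the signature up to switching, since those cycles form a basis of the cycle space), which is more systematic and subsumes the paper's parity argument. The price is a larger residual finite check: the paper's forbidden-subgraph arguments dispose of the patterns $(1,1,5)$, $(1,1,7)$ and all $d_u=4$ patterns other than $(1,1,1,1)$ without computing any new spectra, whereas you defer these to direct spectral computation on candidates with up to $21$ vertices --- legitimate, and consistent with the paper's own computational style, but it is real work you should not understate. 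One small repair: Lemma~\ref{c3} is stated only for $\overline{\mathcal{G}}_S^{\lambda^\ast}$, i.e.\ under the extra hypothesis $\rho>2$, so to rule out consecutive neighbours of $u$ you should instead invoke Lemma~\ref{lem-2.7} directly (a triangle with a pendant vertex is $\dot{C}_3^1$ or $\dot{\mathcal{C}}_3^1$, both forbidden), which works for all $k\ge 4$ and matches how the lemma is actually applied in the paper (where $k\ge 5$).
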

\begin{proof}
If $d_{\dot{C}_k}(u)=2,$ then $\dot{G}$ is  bicyclic. By  Lemma \ref{base}, then
 $\dot{G}\sim   \dot{\Theta}_{3,3,1}$ or $\dot{G}\sim\dot{\Theta}_{k-3,1,1}$. Next  assume that  $3\le d_{\dot{C}_k}(u)\le 4,$ then the underlying graph $G$ of $\dot{G}$ is $G_1$ or $G_2.$  See Fig.  \ref{33}.
By forbidding $\dot{C}_{\ell}^1$ and  $\dot{\mathcal{C}}_{2\ell+1}^1,$ then
 the cycles $\mathcal{C}_1,\mathcal{C}_2,\mathcal{C}_3,\mathcal{C}_4$ in $G_1$ and $G_2$ are even and unbalanced. 
If $\sigma(\dot{C}_k)=+1$ and $d_{\dot{C}_k}(u)=3$, or $\sigma(\dot{C}_k)=-1$ and $d_{\dot{C}_k}(u)=4$, 
then at least one of the cycles $\mathcal{C}_1,\mathcal{C}_2,\mathcal{C}_3$ and $\mathcal{C}_4$  is balanced,
 which is a contradiction.
So, either $\sigma(\dot{C}_k)=+1$ and $d_{\dot{C}_k}(u)=4$, or  $\sigma(\dot{C}_k)=-1$ and $d_{\dot{C}_k}(u)=3.$
Then $\dot{G}\sim \dot{G}_1$ or $\dot{G}\sim \dot{G}_2.$ 
If $\dot{G}\sim \dot{G}_1$, then  $n_1=n_2=n_3=n_4=1$  (by forbidding $\dot{\mathcal{C}}_{2\ell+1}^1$ and $\dot{F}_1$). If $\dot{G}\sim \dot{G}_2$, then $n_i\in \{1,3\}$ for $i=1,2,3$ and at most one of $n_1,n_2$ and $n_3$ is equal to 3  (by forbidding $\dot{\mathcal{C}}_{2\ell+1}^1$, $Q_{2,2,2}$ and $\dot{F}_2$).
Hence, $\dot{G}\sim \dot{\mathcal{X}}_i$ for $i=1,2$ or 3. See Fig.  \ref{33}.
\end{proof}

In the final of this section, we give an algorithm for  searching the signed graph $\dot{G}$ with spectral radius $2< \rho(\dot{G})\le \lambda^\ast.$

\noindent\textbf{Algorithm 1:}

\noindent\emph{Input:} The adjacency matrix $A_0=A (\dot{G}) =(a_{ij})_{n\times n}$ of a signed graph with order $n,$ and a
positive integer number $k.$

\noindent\emph{Output:} The set of the matrices $\mathcal{S}_k =\{A_k=A(\dot{G}_k)=(a_{ij})_{(n+k)\times (n+k)}\mid 2< \rho(\dot{G}_k)\le \lambda^\ast\}.$

\noindent\emph{Step 1.} For $A_0,$  constructing a vector set  $R_1 =\{\textbf{r}_1=( r_1, r_2,\dots , r_n )\mid r_j \in \{-1, 0 , 1 \}, j=1,\dots ,n\}$. (Restrict that  $\sum_{i=1}^n|r_i|\le 4$ by Lemma \ref{l3.4}).

(1.1) Traverse each vector $\textbf{r}_1$ of $R_1$  and construct a matrix $A_1=
\begin{pmatrix} A_0&\textbf{r}_1^T\\ \textbf{r}_1&0 \end{pmatrix},$

(1.2) $a_1$ $\leftarrow$ $max\{ |\lambda_i(A_1)| \mid i=1,n\}$,

(1.3) Traverse each matrix $A_1,$ and add $A_1$ to the set $\mathcal{S}_1$ if the sum of each row of the matrix $|A_1|$ is less than or equal to 4 (by Lemma \ref{l3.4}) and $a_1\le \lambda^\ast,$

\noindent\emph{Step 2.} For each matrix  $A_1$ of  $\mathcal{S}_1,$ constructing a vector set  $R_2 =\{\textbf{r}_2=( r_1, r_2,\dots , r_{n+1} )\mid r_j \in \{-1, 0 , 1 \}, j=1,\dots ,n+1\}$. (Restrict that  $\sum_{i=1}^{n+1}|r_i|\le 4$ by Lemma \ref{l3.4}).

(2.1) Traverse each vector $\textbf{r}_2$ of $R_2$ and construct a matrix $A_2=
\begin{pmatrix} A_1&\textbf{r}_2^T\\ \textbf{r}_2&0 \end{pmatrix},$

(2.2) $a_2$ $\leftarrow$ $max\{ |\lambda_i(A_2)| \mid i=1,n\}$,

(2.3) Traverse each matrix $A_2,$ and add $A_2$ to the set $\mathcal{S}_2$ if the sum of each row of  the matrix  $|A_2|$ is less than or equal to 4 (by Lemma \ref{l3.4}) and $a_2\le \lambda^\ast,$

$\dots\dots$

\noindent\emph{Step k.} For each matrix  $A_{k-1}$ of  $\mathcal{S}_{k-1},$ constructing a vector set
$R_k =\{\textbf{r}_k=( r_1, r_2,\dots,$ $r_{n+k-1} )\mid r_j \in \{-1, 0 , 1 \}, j=1,\dots ,n+k-1\}$.
  (Restrict that  $\sum_{i=1}^{n+k-1}|r_i|\le 4$ by Lemma \ref{l3.4}).

$(k.1)$ Traverse each vector $\textbf{r}_k$ of $R_k$  and construct a matrix $A_k=
\begin{pmatrix} A_{k-1}&\textbf{r}_k^T\\ \textbf{r}_k&0 \end{pmatrix},$

$(k.2)$ $a_k$ $\leftarrow$ $max\{ |\lambda_i(A_k)| \mid i=1,n\}$,

$(k.3)$ Traverse each matrix $A_k,$ and add $A_k$ to the set $\mathcal{S}_k$ if the sum of each row of  the matrix  $|A_k|$ is less than or equal to 4 (by Lemma \ref{l3.4}) and $2<a_k\le \lambda^\ast,$

$(k.4)$ Output the  $\mathcal{S}_k.$

The whole algorithm is over.
\begin{corollary}\label{r3.14}
Applying algorithm 1 to each signed  graph of Theorem \ref{t2.4} $(ii)$, we can check that all of them  are  maximal.
\end{corollary}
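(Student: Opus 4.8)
The plan is to convert this a priori unbounded maximality statement into a single finite, mechanical check and to show that Algorithm 1 run with $k=1$ is exactly that check. Throughout, maximality is to be read within the class of \emph{connected} signed graphs (otherwise nothing is maximal, since one may always adjoin a disjoint isolated vertex without changing the spectral radius). So I first record the reduction: a connected $\dot{G}$ with $\rho(\dot{G})\le\lambda^\ast$ fails to be maximal iff it is a proper induced subgraph of some connected $\dot{H}$ with $\rho(\dot{H})\le\lambda^\ast$. If such $\dot{H}$ exists, then by connectivity there is a vertex $w\in V(\dot{H})\setminus V(\dot{G})$ adjacent to $\dot{G}$, and $\dot{H}[V(\dot{G})\cup\{w\}]$ is a connected one-vertex extension of $\dot{G}$ with $\rho\le\rho(\dot{H})\le\lambda^\ast$ by the interlacing theorem (Lemma \ref{Lem2.5}). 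Conversely any connected one-vertex extension is such an $\dot{H}$. Hence $\dot{G}$ is maximal iff no connected one-vertex extension of $\dot{G}$ lies in $\mathcal{G}_S^{\lambda^\ast}$; in particular, the case $k=1$ already disposes of all $k\ge 1$.

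Next I would match this test to Algorithm 1 with input $A_0=A(\dot{G})$ and $k=1$. The algorithm forms every matrix $A_1=\begin{pmatrix} A_0 & \mathbf{r}^{T}\\ \mathbf{r} & 0\end{pmatrix}$ with $\mathbf{r}\in\{-1,0,1\}^{n}$, that is, it attaches a new vertex $w$ to every subset $U\subseteq V(\dot{G})$ by edges of every sign pattern; for the maximality interpretation one discards the zero vector $\mathbf{r}=\mathbf{0}$ (an isolated addition, irrelevant here). The constraint $\sum_i|r_i|\le 4$ together with the row-sum test in Step $(1.3)$ enforces $\Delta\le 4$ on the whole extension, which is harmless since by Lemma \ref{l3.4} any extension violating it already has $\rho>\lambda^\ast$. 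Because every graph in Theorem \ref{t2.4}$(ii)$ satisfies $2<\rho(\dot{G})\le\lambda^\ast$, interlacing forces $\rho>2$ for every connected extension, so the only binding condition in Step $(1.3)$ is $a_1\le\lambda^\ast$. Thus $\mathcal{S}_1=\emptyset$ is equivalent to the statement ``no connected one-vertex extension of $\dot{G}$ belongs to $\mathcal{G}_S^{\lambda^\ast}$''. Finally, sign patterns differing by a switch at $w$ yield cospectral extensions, so sweeping over all of $\{-1,0,1\}^{n}$ also exhausts the switching classes, matching the ``up to switching'' phrasing of Theorem \ref{t2.4}.

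I would then run Algorithm 1 $(k=1)$ on each of the graphs $\dot{\mathcal{C}}_4[2,2,2,2]$, $\dot{H}_1,\dots,\dot{H}_8$, $\dot{G}_6^1,\dots,\dot{G}_6^7$, $\dot{G}_8^0,\dots,\dot{G}_8^3$, $\dot{\Theta}_{8,2,0}$ and $\dot{G}_{10}$, and report that in every case the output is $\mathcal{S}_1=\emptyset$. Combined with the reduction above, this shows that none of these graphs is a proper induced subgraph of a connected signed graph of spectral radius at most $\lambda^\ast$, i.e.\ each one is maximal, which is the assertion of the corollary.

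The main obstacle is computational rather than conceptual: for a listed graph on $n$ vertices the number of admissible attachment patterns is $\sum_{j=1}^{4}\binom{n}{j}2^{j}$, a few thousand per graph, so the proof's correctness rests on the \emph{exhaustiveness} of the enumeration rather than on any single eigenvalue estimate. Accordingly, the two points I would take care to justify rigorously are (i) that the interlacing argument genuinely lets us restrict to $k=1$, and that the zero vector $\mathbf{r}=\mathbf{0}$ is the only case where connectivity must be invoked to exclude a spurious extension; and (ii) that the degree cap $\sum_i|r_i|\le 4$ discards only extensions already forbidden by Lemma \ref{l3.4}. Granting these, the empty outputs $\mathcal{S}_1=\emptyset$ produced by Algorithm 1 furnish the claimed maximality.
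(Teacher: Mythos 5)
Your proposal is correct and follows essentially the same route as the paper, which simply asserts the maximality as the outcome of running Algorithm 1 on each graph of Theorem \ref{t2.4} $(ii)$; your added justification (that by interlacing a non-maximal connected $\dot{G}$ must admit a connected one-vertex extension in $\mathcal{G}_S^{\lambda^\ast}$, so the $k=1$ sweep with the zero vector discarded and the degree cap of Lemma \ref{l3.4} is an exhaustive test) is exactly the reasoning the paper leaves implicit. The verification itself remains a finite computation, as in the paper.
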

\section{Signed graph whose spectral radius
does not exceed $\sqrt{2+\sqrt{5}}$}
In this section, we identify all signed graphs whose spectral radius
does not exceed $\sqrt{2+\sqrt{5}}$. By Lemma \ref{u1},  we   assume  that $m\ge n+1.$ We break  into four subsections.
\subsection{$\dot{C}_{k}\subset \dot{G}$ where $k$ is odd or $k\ge 10$}
 Firstly we consider that  $\dot{C}_{k}\subset \dot{G}$ where $k$ is odd or $k\ge 10$.

\begin{lemma}\label{r9}
Let $\dot{G}$ be one of the signed graphs $\dot{X}_1,\dots,\dot{X}_{8}$ where $k$ is odd or $k\ge 10$, see Fig. \ref{10-2}.
Then  $\rho(\dot{G})\le\lambda^\ast$ if and only if  $\dot{G}\sim \dot{X}_3$ and $k=10$. 
\end{lemma}

\begin{figure}
\begin{center}
  \includegraphics[width=13cm,height=5cm]{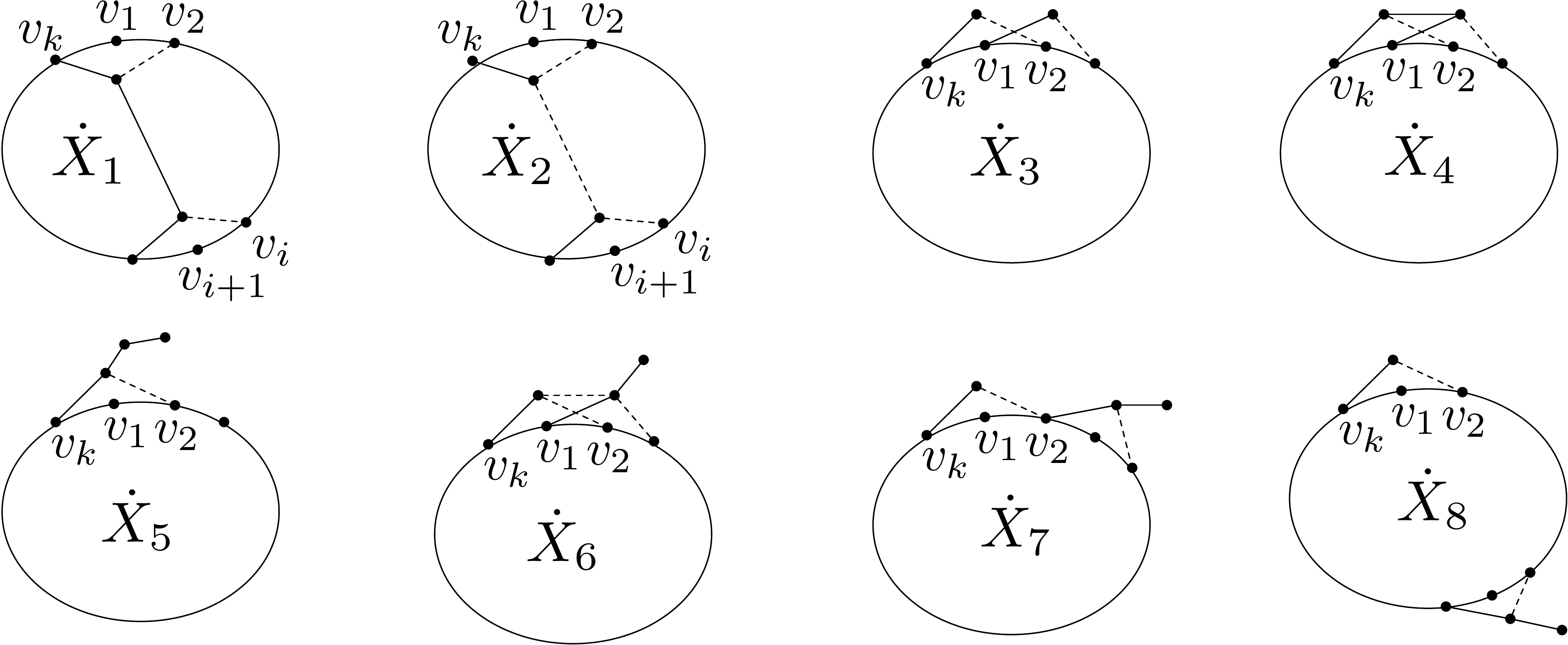}
  \end{center}
   \vskip -0.8cm\caption{ The signed graphs  $\dot{X}_1-\dot{X}_{8}$ where $k$ is odd or $k\ge 10$.}
  \label{10-2}
\end{figure}

\begin{proof} If    $\dot{G}\sim \dot{X}_3$ and $k=10$,  then $\dot{G}\subset \dot{G}_{10}.$ Note that $\dot{C}_{\ell}^{1}\subset \dot{X}_1$ ($4\le \ell<k$),
 $Q_{2,2,2}\subset  \dot{X}_2$ or $\dot{\mathcal{C}}_{2\ell +1}^{1}\subset \dot{X}_2$,
 $\dot{F_9}\subset \dot{X}_3$ ($k\ge 12$ and $k$ is even), $\dot{\mathcal{C}}_{k}^{1}\subset \dot{X}_3$ ($k$ is odd), $\dot{C}_4^1\subset \dot{X}_4,$ $T_{2,3,4}\subset \dot{X}_5$ and $\dot{C}_k^1\subset \dot{X}_j$ for $j=6,7,8.$ By  Lemma \ref{lem-2.7}, then  $\rho(\dot{X}_i)>\lambda^\ast$ for $i=1,\dots,8$ expect  the signed graph  $\dot{X}_3$ where $k=10$.
\end{proof}

\begin{lemma}\label{10}
Let $\dot{G} \in \overline{\mathcal{G}}_S^{\lambda^\ast}$ be a signed graph with  $m\ge n+1$.
If  $\dot{C}_{k}\subset \dot{G}$ where $k$ is odd or $k\ge 10$,
then $\dot{G} \subset \dot{\Theta}_{8,2,0},\dot{G}_{10}$ or $\dot{G}_0^k$ $(k\ge 12$ and $k$ is even$).$
See Figs. \ref{main} and \ref{main-2}.
\end{lemma}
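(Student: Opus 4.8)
The plan is to analyze the structure of $\dot{G}$ by a case analysis on the cycle $\dot{C}_k$ together with the vertices attached to it, using the forbidden-subgraph machinery (Lemma \ref{lem-2.7}) as the principal tool for pruning. Since $\dot{G}\in\overline{\mathcal{G}}_S^{\lambda^\ast}$ has $m\ge n+1$, it is at least bicyclic, so there must be some extra edge beyond the cycle $\dot{C}_k$; the key point established just before Lemma \ref{c1} is that for $k\ge 5$ there is a vertex $u$ outside $\dot{C}_k$ adjacent to at least two vertices of $\dot{C}_k$. The first step, therefore, is to apply Lemma \ref{c1} to $\dot{C}_k$ and such a $u$: this forces $\dot{G}$ to contain (up to switching) one of $\dot{\Theta}_{3,3,1}$, $\dot{\Theta}_{k-3,1,1}$, $\dot{\mathcal{X}}_1$, $\dot{\mathcal{X}}_2$, or $\dot{\mathcal{X}}_3$ as the local structure around the cycle. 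Because $k$ is odd or $k\ge 10$, the odd/balanced-cycle cases are immediately eliminated by forbidding $\dot{C}_\ell^1$ and $\dot{\mathcal{C}}_{2\ell+1}^1$ (which already show $\dot{C}_k$ must be unbalanced and $k$ even, hence $k\ge 10$).

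Next I would reduce to a bounded set of "skeleton" shapes $\dot{X}_1,\dots,\dot{X}_8$ of Fig. \ref{10-2}, which is precisely what Lemma \ref{r9} is designed for. The idea is that any $\dot{G}$ satisfying the hypotheses must contain one of these eight configurations as an induced subgraph once we record how the extra edges (beyond the cycle) attach. By Lemma \ref{r9}, every $\dot{X}_i$ with $i\ne 3$ is forbidden (it contains a forbidden subgraph such as $\dot{C}_\ell^1$, $Q_{2,2,2}$, $\dot{F}_9$, $T_{2,3,4}$, or $\dot{\mathcal{C}}_{2\ell+1}^1$), and $\dot{X}_3$ survives only when $k=10$. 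This pins down the admissible local structure: when $k=10$ the relevant skeleton is $\dot{X}_3$, which sits inside $\dot{G}_{10}$; for $k\ge 12$ even, the cycle with its attachments must be $\dot{G}_0^k$; and the theta-type configuration $\dot{\Theta}_{8,2,0}$ accounts for the case where the "second cycle" is short.

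Then I would verify that no further vertices can be appended without creating a forbidden subgraph, i.e. that $\dot{G}$ cannot properly exceed these three maximal shapes. Concretely, for each of $\dot{\Theta}_{8,2,0}$, $\dot{G}_{10}$, and $\dot{G}_0^k$ one checks, using Lemma \ref{l3.4} ($\Delta\le 4$) to bound degrees and using $\dot{F}_1,\dot{F}_2$ and the path/$Q$-type forbidden graphs to bound how far pendant structure can extend, that every vertex outside the described shape would force a copy of $T_{2,3,4}$, $\dot{C}_\ell^1$, or some $\dot{F}_i$. This yields $\dot{G}\subset\dot{\Theta}_{8,2,0}$, $\dot{G}\subset\dot{G}_{10}$, or $\dot{G}\subset\dot{G}_0^k$.

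The main obstacle I anticipate is the bookkeeping in the last step: for each even $k\ge 12$ one must argue uniformly that the only admissible bicyclic-or-higher structure on $\dot{C}_k$ is exactly $\dot{G}_0^k$, ruling out all alternative placements of the extra edges and pendant paths. This is where forbidding $\dot{F}_9$ (for the large even cycles) and $Q_{2,i-1,\frac{k}{2}-3}$-type subgraphs (as in the proof of Lemma \ref{u1}) does the heavy lifting, forcing almost all cycle vertices to have degree $2$ and confining the attachments to the symmetric positions that define $\dot{G}_0^k$. The $k=10$ boundary case must be handled separately because there the cycle is short enough that $\dot{F}_9$ does not yet appear, which is exactly why $\dot{G}_{10}$ (rather than $\dot{G}_0^{10}$) emerges as the maximal graph there.
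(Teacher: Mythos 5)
Your overall strategy coincides with the paper's: split via Lemma \ref{c1} (together with Lemma \ref{base} for the $\dot{\Theta}_{8,2,0}$ possibility), prune with the eight configurations $\dot{X}_1,\dots,\dot{X}_8$ of Lemma \ref{r9}, then check maximality. The genuine gap is in the middle step. You assert that once one records ``how the extra edges attach'', $\dot{G}$ must contain one of $\dot{X}_1,\dots,\dot{X}_8$, but you give no mechanism that makes this list finite and exhaustive. The paper's mechanism is essential: it takes an induced subgraph $\dot{H}$ of $\dot{G}$ containing $\dot{\Theta}_{k-3,1,1}$ with $\rho(\dot{H})\le 2$, invokes the McKee--Smyth classification to conclude $\dot{H}\subset\dot{T}_{2k}$, and only then can it enumerate how a good vertex $w$ outside $\dot{H}$ attaches: either to the rim vertices $v_i$ (forced to a $\pm$ pair on $v_{i-1},v_{i+1}$) and possibly to the spokes $u_j$, or to no rim vertex at all, in which case $w$ must be adjacent to $u_1$. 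These finitely many patterns are exactly what Claims 1 and 2 of the paper's proof reduce to $\dot{X}_1$--$\dot{X}_8$ and $\dot{G}_0^k$. Without the $\dot{H}\subset\dot{T}_{2k}$ step your case analysis has no a priori control over vertices at distance $\ge 1$ from the cycle; in particular $\dot{G}_0^k$ (a vertex attached through a spoke $u_1$, at distance $2$ from the cycle) does not arise from any placement of edges on the cycle itself, so your step 2 as stated would miss it for $k\ge 12$.

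A second, smaller issue is the endgame. For $k=10$ both attachment types survive ($\dot{X}_3$ and $\dot{G}_0^{10}$), and the maximal graph $\dot{G}_{10}$ is obtained by combining them; the paper establishes this, as well as the maximality of $\dot{\Theta}_{8,2,0}$, by the computer search of Algorithm 1 (Corollary \ref{r3.14}), not by a forbidden-subgraph argument. Your plan to do these finite checks by hand is legitimate in principle but is precisely the hardest bookkeeping in the lemma and is left as a placeholder. By contrast, your sketch of the uniform argument for even $k\ge 12$ is consistent with the paper's (a second good vertex would create $\dot{F}_1$), and your elimination of odd $k$ via $\dot{C}_\ell^1$ and $\dot{\mathcal{C}}_{2\ell+1}^1$ matches the paper.
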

\begin{proof}
  By Lemma \ref{c1}, then  $\dot{\Theta}_{8,2,0}\subset \dot{G}$ or $\dot{\Theta}_{k-3,1,1}\subset \dot{G}$.
By Corollary  \ref{r3.14}, we know that $\dot{\Theta}_{8,2,0}$ is maximal.
Then
we consider that $\dot{\Theta}_{k-3,1,1}\subset \dot{G}$.
Let $\dot{H}$ be the  induced subgraph of $\dot{G}$ such that  $\dot{\Theta}_{k-3,1,1}\subset \dot{H}$  and $\rho(\dot{H})\le 2$.
Clearly, $\dot{H}\subset \dot{T}_{2k}$. We use the vertex label of Fig.  \ref{figure4} and let $V(\dot{\Theta}_{k-3,1,1})=\{v_1,\dots,v_k,u_1\}$. Let $w$ be a good  vertex in $V(\dot{G}) \setminus V(\dot{H})$.

\noindent\textbf{Claim 1.} If $w\sim  v_i$ for one $i$, then $k=10$ and $\dot{H}_U(w)\sim \dot{X}_3$.
\vspace{-0.2cm}
\begin{proof}
By Lemmas \ref{lem-2.7} and \ref{c1}, then $w \mathop{\sim}\limits^{+} v_{i-1}$ and $w \mathop{\sim}\limits^{-} v_{i+1}$ for one $i$ (mod $k$). It is not hard to get that  $i\ne 1$ and $u_{i}\not\in V(\dot{H})$ (otherwise $\dot{C}_4^1\subset \dot{G}$ and $\rho(\dot{G})\ge \rho(\dot{C}_4^1)> \lambda^\ast$).
Since $\dot{H}_U(w)$ is not an induced subgraph of $\dot{T}_{2k},$ then  one of the followings happens:

$\bullet$ $w\sim u_{j}$ for one $j\ne i\pm 1,$ then $\dot{X}_{1}\subset \dot{G}$ or $\dot{X}_{2}\subset \dot{G};$

$\bullet$ $w\not\sim u_{j}$ for one $j\in \{i\pm 1\}$, then $\dot{X}_{3}\subset \dot{G};$

$\bullet$  $w\mathop{\sim}\limits^{+} u_{j}$ for one $j\in \{i\pm 1\}$, then $\dot{X}_{4}\subset \dot{G}.$

\noindent  By Lemma \ref{r9}, then $\rho(\dot{H}_U(w))\le \lambda^\ast$ if and only if $k=10$ and
  $\dot{H}_U(w)\sim \dot{X}_3$.
\end{proof}
\vspace{-0.2cm}
\noindent\textbf{Claim 2.} If $w\not\sim v_i$ for all $i=1,\dots,k$, then $w\sim u_1$. Moreover,  $\dot{H}_U(w)\sim \dot{G}_0^k$.
\vspace{-0.2cm}
\begin{proof}
If $w\not\sim u_1,$ then there is a path  from the vertex $w$ to the vertex $u_i$ of $\dot{H}.$ And now $\dot{X}_i \subset \dot{G}$ for one $i\in \{5,6,7,8\}$ and $\rho(\dot{G})>\lambda^\ast$ (by Lemma \ref{r9}),
  which is a contradiction. So
$w\sim u_1.$ If  $u_i\in V(\dot{H})$ for  $i\ne 1,$  we also get that $\dot{X}_i \subset \dot{G}$ for one $i\in \{5,6,7,8\}$,  which is a contradiction. Hence,   $\dot{H}_U(w)\sim \dot{G}_0^k$. See Fig. \ref{main-2}.
\end{proof}
\vspace{-0.2cm}

If $k=10$, then $\dot{H}_U(w)\sim \dot{X}_3$  or $\dot{H}_U(w)\sim \dot{G}_0^{10}.$
Applying algorithm 1 to  $A(\dot{H}_U(w))$, we obtain that  $\dot{G}\subset \dot{G}_{10}$.

 If $k$ is odd, or $k$ is even and $k\ge 12$, then  $\dot{H}_U(w)\sim \dot{G}_0^k$.
 If $k$ is odd, then $\dot{\mathcal{C}}_k^{1}\subset \dot{G},$ which contradicts to Lemma \ref{lem-2.7}.
 If $k$ is even and $k\ge 12$, then $\dot{G}_0^k$ is  maximal, otherwise
   there is  another vertex  $w^\prime$ in $V(\dot{G})\setminus V(\dot{H})$, then $w^\prime\sim u_1$ (by Claims 1 and 2) and
  $\dot{F}_1\subset \dot{G}$, which contradicts to Lemma \ref{lem-2.7}. Hence,    $\dot{G} \sim  \dot{G}_0^k$.
\end{proof}

\begin{remark}
By Lemma \ref{10},  we next  state that the signed graph $\dot{G}$ is bipartite.
\end{remark}
\subsection{$\dot{C}_{8}\subset \dot{G}$}

Secondly we consider that  $\dot{G}$ is  $\dot{C}_{k}$-free where $k\ge 10$ and $\dot{C}_{8}\subset \dot{G}$.
By Lemma \ref{c1}, then $\dot{\Theta}_{5,1,1},\dot{\Theta}_{6,2,0},\dot{\Theta}_{3,3,1}, \dot{\mathcal{X}}_2$ or $\dot{\mathcal{X}}_3$ is an induced subgraph of $\dot{G}.$ Let $\dot{H}$ be the  induced subgraph of $\dot{G}$ such that  $\dot{\Theta}_{5,1,1}\subset \dot{H}$, $\dot{\Theta}_{3,3,1}\subset \dot{H},$ $\dot{\mathcal{X}}_2\subset \dot{H}$ or $\dot{\mathcal{X}}_3\subset \dot{H}$  and $\rho(\dot{H})=2.$
\begin{lemma}\label{8}
Let $\dot{G} \in \overline{\mathcal{G}}_S^{\lambda^\ast}$ be a $\dot{C}_{k}$-free $(k\ge10)$ bipartite signed graph with $m\ge n+1$. If  $\dot{C}_{8}\subset \dot{G}$, then $\dot{G} \subset \dot{G}_8^i$ for $i=1,2,3.$ See Fig. \ref{main}.
\end{lemma}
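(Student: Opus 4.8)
The plan is to follow the same template used in Lemma~\ref{10} for the $C_{10}$ and longer-cycle cases, adapted to the eight-cycle. I would start from the structural dichotomy already forced by Lemma~\ref{c1}: any vertex outside $\dot{C}_8$ adjacent to at least two cycle vertices produces one of $\dot{\Theta}_{5,1,1}$, $\dot{\Theta}_{6,2,0}$, $\dot{\Theta}_{3,3,1}$, $\dot{\mathcal{X}}_2$ or $\dot{\mathcal{X}}_3$ as an induced subgraph, so one of these must sit inside $\dot{G}$. I would fix an induced subgraph $\dot{H}$ with $\rho(\dot{H})=2$ containing whichever of these base configurations occurs (using the vertex labels of Fig.~\ref{figure4}, so $\dot{H}\subset\dot{T}_{16}$), and then analyze how a good vertex $w\in V(\dot{G})\setminus V(\dot{H})$ can attach. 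Because $\dot{G}$ is bipartite and $\dot{C}_k$-free for $k\ge 10$, every cycle created by $w$ must again be an unbalanced $4$-cycle or $8$-cycle, which sharply limits the admissible neighborhoods of $w$.

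The key steps, in order, are as follows. First I would dispose of the configurations $\dot{\Theta}_{6,2,0}$, $\dot{\mathcal{X}}_2$, $\dot{\mathcal{X}}_3$ by Corollary~\ref{r3.14} / direct checking: several of these are already maximal or are handled by the bicyclic analysis, so the only genuinely open sources of arbitrarily large signed graphs are the ``path-with-a-tail'' bases $\dot{\Theta}_{5,1,1}$ and $\dot{\Theta}_{3,3,1}$. Second, for each such base I would run an analogue of Claims~1 and~2 of Lemma~\ref{10}: a good vertex $w$ either attaches to an interior cycle vertex $v_i$ (forbidden unless it reproduces the sign pattern $w\mathop{\sim}\limits^{+}v_{i-1}$, $w\mathop{\sim}\limits^{-}v_{i+1}$, and even then it collides with forbidden subgraphs like $\dot{C}_4^1$, $T_{2,3,4}$ or members of $\dot{F}_1$–$\dot{F}_{11}$), or it extends the pendant/path part, in which case forbidding $T_{3,3,3}$, $T_{2,3,4}$, $Q_{2,2,2}$ and the relevant $\dot{F}_i$ bounds how far the attachment can grow. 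Third, I would collect the surviving possibilities and verify by Algorithm~1 applied to $A(\dot{H}_U(w))$ that each extension is an induced subgraph of $\dot{G}_8^1$, $\dot{G}_8^2$ or $\dot{G}_8^3$, and that these three are maximal so no further good vertex can be appended.

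The main obstacle I anticipate is the bookkeeping in the second step: with an $8$-cycle there are more internal vertices than in the longer-cycle case, so there are genuinely more ways for $w$ to create a short unbalanced cycle without immediately producing a forbidden graph, and one must check each signed attachment (there are three sign choices per edge, constrained by $\Delta(\dot{G})\le 4$ from Lemma~\ref{l3.4}) against the full forbidden list of Lemma~\ref{lem-2.7} and Fig.~\ref{f1}. In particular, deciding exactly which of the four base types $\dot{\Theta}_{5,1,1}$, $\dot{\Theta}_{3,3,1}$, $\dot{\mathcal{X}}_2$, $\dot{\mathcal{X}}_3$ each $\dot{G}_8^i$ descends from, and ensuring that the three output graphs $\dot{G}_8^1,\dot{G}_8^2,\dot{G}_8^3$ exhaust all cases without overlap, is where the argument is most delicate. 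I expect the cleanest route is to lean heavily on Algorithm~1 to enumerate the finitely many maximal extensions once the base $\dot{H}$ is fixed, reserving the hand-argument (via interlacing and the forbidden subgraphs) for ruling out attachments to interior cycle vertices and for showing the resulting graphs admit no further good vertex.
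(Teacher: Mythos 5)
Your overall template (invoke Lemma \ref{c1} to force one of $\dot{\Theta}_{5,1,1}$, $\dot{\Theta}_{6,2,0}$, $\dot{\Theta}_{3,3,1}$, $\dot{\mathcal{X}}_2$, $\dot{\mathcal{X}}_3$ as an induced subgraph, embed it in a maximal $\dot{H}$ with $\rho(\dot{H})\le 2$, and finish with Algorithm 1) matches the paper, but your first step contains a concrete error that would lose part of the conclusion. None of $\dot{\Theta}_{6,2,0}$, $\dot{\mathcal{X}}_2$, $\dot{\mathcal{X}}_3$ is maximal, and none is ``handled by the bicyclic analysis'': Lemma \ref{base} only classifies bicyclic \emph{base} graphs and says nothing about how far they extend. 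In fact the paper obtains $\dot{G}_8^1$ and $\dot{G}_8^2$ precisely by applying Algorithm 1 to $A(\dot{\Theta}_{6,2,0})$, so if you discard $\dot{\Theta}_{6,2,0}$ at the outset you never produce two of the three target graphs; likewise $\dot{\mathcal{X}}_2$ and $\dot{\mathcal{X}}_3$ have spectral radius at most $2$ and extend (via the $\dot{G}_6^\prime$, $\dot{G}_6^{\prime\prime}$ type subgraphs of $\dot{S}_{14}$, $\dot{S}_{16}$) into $\dot{G}_8^3$, so they must be run through the same machinery rather than dismissed. Corollary \ref{r3.14} asserts maximality only for the graphs listed in Theorem \ref{t2.4} $(ii)$, which does not include any of these three bases.

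A secondary point: the hand analysis you propose in imitation of Claims 1 and 2 of Lemma \ref{10} is not needed here and is not what the paper does. The decisive observation is that any $\dot{H}$ with $\rho(\dot{H})\le 2$ containing one of the qualifying bases is, by the McKee--Smyth classification, an induced subgraph of $\dot{S}_{14}$, $\dot{S}_{16}$ or $\dot{T}_{16}$, hence has at most $15$ vertices; this reduces the whole problem to a finite enumeration of candidate $\dot{H}$'s by order (the graphs $\dot{\Theta}_{5,1,1}^1$--$\dot{\Theta}_{5,1,1}^{10}$ and those of Fig. \ref{8-2}), each fed to Algorithm 1. Your worry about ``bookkeeping'' over sign patterns of attachments is thereby absorbed entirely by the computer search, and the interlacing/forbidden-subgraph arguments you reserve for interior attachments are not required in this subsection.
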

\begin{proof}
Applying  algorithm 1 to $A(\dot{\Theta}_{6,2,0})$,
we obtain that $\dot{G} \subset \dot{G}_8^1$ or $\dot{G} \subset\dot{G}_8^2.$
We next suppose that $\dot{\Theta}_{5,1,1}\subset \dot{G},\dot{\Theta}_{3,3,1}\subset \dot{G},\dot{\mathcal{X}}_2\subset \dot{G}$ or  $\dot{\mathcal{X}}_3\subset \dot{G}.$
\begin{figure}
\begin{center}
  \includegraphics[width=15cm,height=3.5cm]{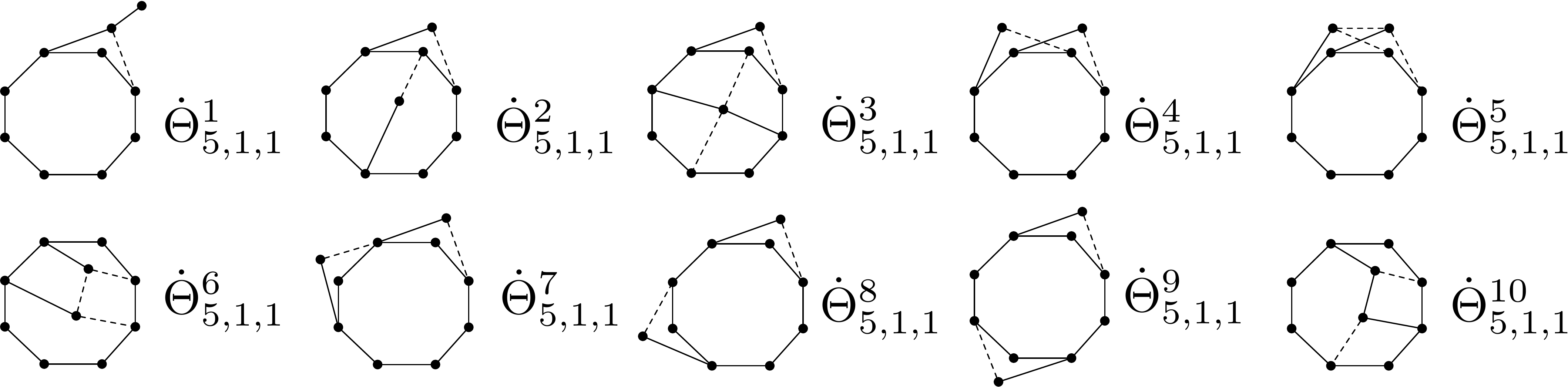}
  \end{center}
   \vskip -0.6cm\caption{ The signed graphs  $\dot{\Theta}_{5,1,1}^1-\dot{\Theta}_{5,1,1}^{10}$.}
  \label{8-3}
\end{figure}
\begin{figure}
\begin{center}
  \includegraphics[width=15cm,height=3cm]{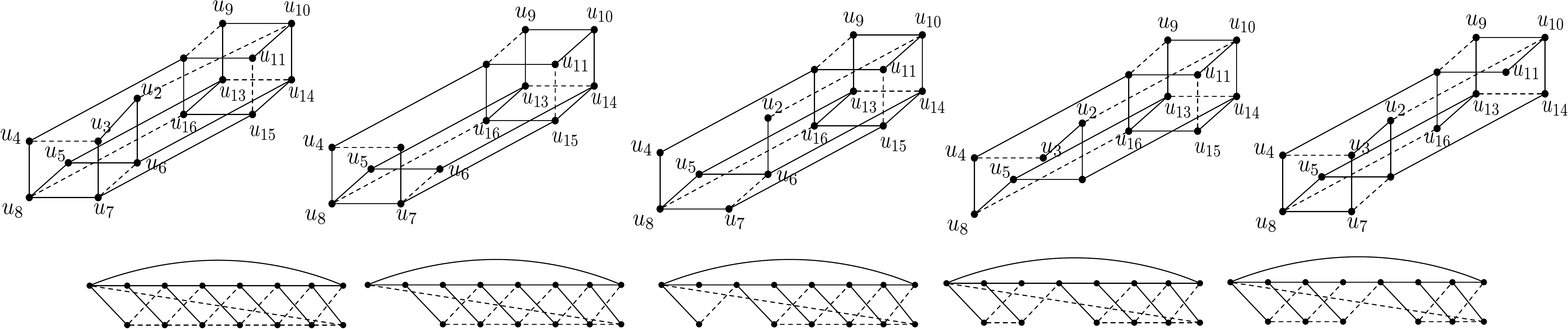}
  \end{center}
   \vskip -0.7cm\caption{The signed graphs in the proof of Lemma \ref{8}.}
  \label{8-2}
\end{figure}
If $\dot{\Theta}_{5,1,1}\subset \dot{G}$, then $\dot{\Theta}_{5,1,1}\subset \dot{H}$.
So, $\dot{H}\subset \dot{S}_{16}$ or $\dot{H}\subset\dot{T}_{16}.$ Then $9\le |V(\dot{H})|\le 15.$
By Fig. \ref{figure4}, 
 if $|V(\dot{H})|=9,$ then  $\dot{H}$ is $\dot{\Theta}_{5,1,1}$;
 if $|V(\dot{H})|=10,$ then $\dot{H}\sim \dot{\Theta}_{5,1,1}^i$ for $i=1,\dots,10$ (see Fig. \ref{8-3}); 
if $|V(\dot{H})|=14$ or $15,$  then $\dot{H}$ is switching isomorphic to one of the signed graphs of Fig. \ref{8-2}. 
Moreover,  if $11\le |V(\dot{H})|\le 13,$ then $\dot{H}$  contains one
$\dot{\Theta}_{5,1,1}^i$ ($i=1,\dots,10$) as  an induced subgraph.
Now
applying algorithm 1 to   $A(\dot{H})$ where $\dot{H}$ is $\dot{\Theta}_{5,1,1}$ or one of  the   signed graphs of Fig. \ref{8-2}, we  get that there is no  signed graph $\dot{G}$ with order $|V(\dot{H})|+1$ and $2<\rho(\dot{G})\le\lambda^\ast,$ and
  applying algorithm 1 to  $A(\dot{\Theta}_{5,1,1}^i)$ for $i=1,\dots,10$, we get that $\dot{G}\subset \dot{G}_8^3$.
If $\dot{\Theta}_{3,3,1}\subset \dot{G}$, $\dot{\mathcal{X}}_2\subset \dot{G}$ or $\dot{\mathcal{X}}_3\subset \dot{G}$, we can similarly get that $\dot{G}\subset \dot{G}_8^3.$
 \end{proof}

\subsection{$\dot{C}_{6}\subset \dot{G}$}

Thirdly we consider that   $\dot{G}$ is  $\dot{C}_{k}$-free where $k\ge 8$ and $\dot{C}_{6}\subset \dot{G}$.
By  Lemma \ref{c1},  then $\dot{\Theta}_{4,2,0}\subset \dot{G}, \dot{\Theta}_{3,1,1}\subset \dot{G}$ or $\dot{\mathcal{X}}_1\subset \dot{G}$.
 Let $\dot{H}$ be the induced subgraph of $\dot{G}$ such that  $\dot{\Theta}_{4,2,0}\subset \dot{H},$ $\dot{\Theta}_{3,1,1}\subset \dot{H}$ or $\dot{\mathcal{X}}_1\subset \dot{H}$  and $\rho(\dot{H})\le 2$.
 \begin{figure}
\begin{center}
  \includegraphics[width=15cm,height=6.75cm]{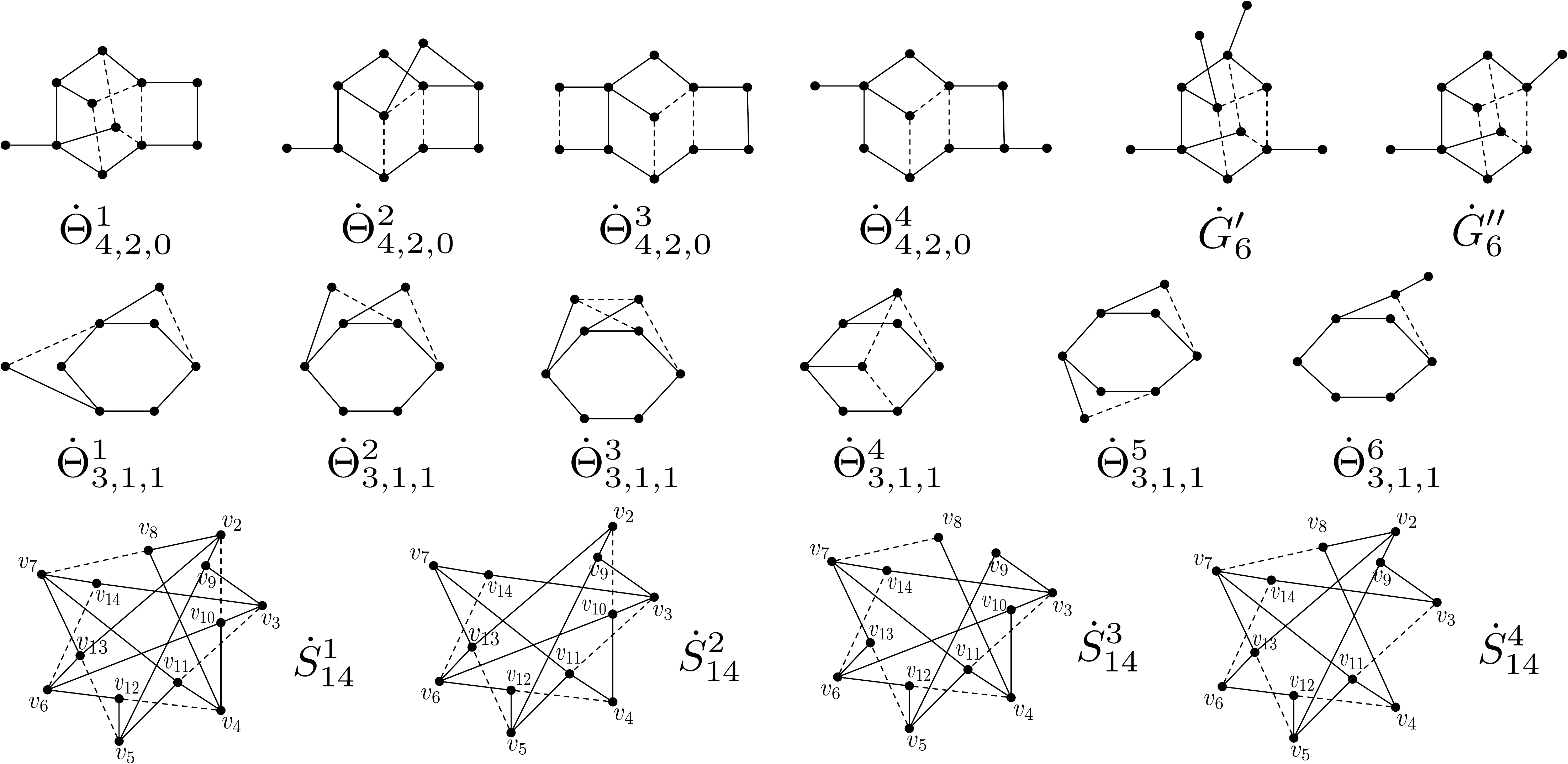}
  \end{center}
   \vskip -0.6cm\caption{ The signed graphs  $\dot{\Theta}_{4,2,0}^1-\dot{\Theta}_{4,2,0}^4$, $\dot{G}_6^\prime,$ $\dot{G}_6^{\prime\prime}$, $\dot{\Theta}_{3,1,1}^1$-- $\dot{\Theta}_{3,1,1}^6$,  $\dot{S}_{14}^1-\dot{S}_{14}^4$.}
  \label{6-1}
\end{figure}

\begin{lemma}\label{c61}
Let $\dot{G} \in \overline{\mathcal{G}}_S^{\lambda^\ast}$ be a $\dot{C}_{k}$-free $(k\ge8)$ bipartite signed graph with $m\ge n+1$. If  $\dot{C}_{6}\subset \dot{G}$,
then $\dot{G} \subset \dot{G}_6^1,\dot{G}_6^2,\dot{G}_6^3,\dot{G}_6^4,\dot{G}_6^5,\dot{G}_6^6,\dot{G}_6^7,\dot{G}_8^3,\dot{S}_1^n$ or $\dot{S}_2^n.$
 See Figs. \ref{main} and \ref{main-2}.
\end{lemma}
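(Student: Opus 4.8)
The plan is to follow exactly the strategy established in Lemmas \ref{10} and \ref{8}, adapting it to the case $k=6$. By Lemma \ref{c1}, the hypothesis that $\dot{C}_6 \subset \dot{G}$ together with $m\ge n+1$ forces one of $\dot{\Theta}_{4,2,0}$, $\dot{\Theta}_{3,1,1}$ or $\dot{\mathcal{X}}_1$ to be an induced subgraph of $\dot{G}$ (a vertex outside the $6$-cycle must attach to at least two cycle vertices, since otherwise the bicyclic base analysis of Lemma \ref{base} would force $k=4$). I would split into these three starting cases, and in each case let $\dot{H}$ be a maximal induced subgraph containing the relevant seed with $\rho(\dot{H})\le 2$, so that $\dot{H}$ is (by Theorem \ref{thm2.2}) an induced subgraph of one of $\dot{S}_{14},\dot{S}_{16},\dot{T}_{2k}$.

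First I would dispose of the $\dot{\Theta}_{4,2,0}$ case directly by running Algorithm 1 on $A(\dot{\Theta}_{4,2,0})$; I expect this to yield that $\dot{G}$ embeds into $\dot{G}_6^1,\dots,\dot{G}_6^4$ (or possibly $\dot{G}_6^5$), exactly as the $\dot{\Theta}_{6,2,0}$ computation in Lemma \ref{8} produced the $\dot{G}_8^i$. The main work is the $\dot{\Theta}_{3,1,1}$ and $\dot{\mathcal{X}}_1$ cases. Here I would enumerate, using Fig. \ref{6-1}, all switching-isomorphism types of $\dot{H}$ that contain $\dot{\Theta}_{3,1,1}^i$ (respectively $\dot{\mathcal{X}}_1$) as the $\rho\le 2$ saturation grows from $9$ up to $14$ vertices inside $\dot{S}_{14}$; the branches labeled $\dot{\Theta}_{4,2,0}^1$--$\dot{\Theta}_{4,2,0}^4$, $\dot{G}_6^\prime$, $\dot{G}_6^{\prime\prime}$, $\dot{\Theta}_{3,1,1}^1$--$\dot{\Theta}_{3,1,1}^6$ and $\dot{S}_{14}^1$--$\dot{S}_{14}^4$ in that figure are precisely the intermediate configurations. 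For each such $\dot{H}$ I would introduce a good vertex $w\notin V(\dot{H})$ and argue, by forbidding the signed graphs of Lemma \ref{lem-2.7} (in particular $\dot{F}_1$--$\dot{F}_{11}$, $\dot{C}_\ell^1$, $\dot{\mathcal{C}}_{2\ell+1}^1$, and the relevant $T_{a,b,c}$, $Q_{a,b,c}$), that its attachment is severely constrained; the maximal outputs are then read off by Algorithm 1.

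The structures $\dot{S}_1^n$ and $\dot{S}_2^n$ should arise as the infinite families: when the good vertices attach so as to lengthen a pendant path while keeping $\rho\le \lambda^\ast$, Algorithm 1 on the appropriate seed stabilizes into these two families (cf. how $\dot{G}_0^k$ appeared in Lemma \ref{10} as the $k$-dependent maximal graph). I would verify $\rho<\lambda^\ast$ for the whole families by the monotonicity/eigenvector bound underlying Lemma \ref{4.13}, and confirm maximality via Corollary \ref{r3.14}. The appearance of $\dot{G}_8^3$ in the conclusion reflects the fact that some extension of a $\dot{C}_6$-based $\dot{H}$ actually creates an induced $\dot{C}_8$ whose maximal host is $\dot{G}_8^3$; I would flag this overlap explicitly so the case analysis remains exhaustive.

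The hard part will be the bookkeeping of the finitely many $\dot{H}$ of orders $10$--$14$ and, for each, certifying that Algorithm 1's output is complete rather than merely checking a handful of attachments by hand; the delicate step is ensuring that no good vertex $w$ can be added in a way that escapes all the forbidden configurations yet still lands outside the claimed target graphs, which is why I rely on the exhaustive search of Algorithm 1 (justified by $\Delta(\dot{G})\le 4$, Lemma \ref{l3.4}) rather than ad hoc interlacing alone.
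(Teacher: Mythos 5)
Your plan follows the paper's proof essentially verbatim: Lemma \ref{c1} yields the seed $\dot{\Theta}_{4,2,0}$, $\dot{\Theta}_{3,1,1}$ or $\dot{\mathcal{X}}_1$; the saturation $\dot{H}$ with $\rho(\dot{H})\le 2$ is enumerated inside $\dot{S}_{14}$, $\dot{S}_{16}$, $\dot{T}_{2k}$ exactly via the configurations of Fig.~\ref{6-1}; Algorithm 1 handles the finite extensions (the $\dot{\Theta}_{4,2,0}$ branch indeed produces $\dot{G}_8^3$ together with $\dot{G}_6^1$--$\dot{G}_6^6$); and the infinite families are treated separately. The one point where your plan would not go through as written is the maximality of $\dot{S}_1^n$ and $\dot{S}_2^n$: Corollary \ref{r3.14} covers only the finitely many graphs of Theorem \ref{t2.4} $(ii)$ and Algorithm 1 is a bounded search that cannot certify an infinite family, so one must argue as the paper does --- $\dot{S}_1^{12}$ (resp.\ the four order-$14$ graphs $\dot{H}^{14}_1,\dot{H}^{14}_2,\dot{H}^{14}_3,\dot{S}_2^{14}$) are the \emph{only} admissible configurations of that order containing the seed, hence any further vertex avoids the core and, by forbidding $\dot{F}_2$, can only extend the tail path.
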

 \begin{proof}
 If $\dot{\Theta}_{4,2,0}\subset \dot{H},$ then $\dot{H}\subset \dot{S}_{14}$ or $\dot{H}\subset \dot{S}_{16}$.
Since $\dot{G}$ is $\dot{C}_k$-free $(k\ge 8),$ by Fig. \ref{figure4}, then $\dot{H}\subset \dot{\Theta}_{4,2,0}^i$ for $i=1,2,3,4.$ See Fig. \ref{6-1}.
Applying  algorithm 1 to  $A(\dot{H})$ where $\dot{H}\subset \dot{\Theta}_{4,2,0}^i$ for $i=1,2,3,4$, we  get that $\dot{G}\subset \dot{G}^3_8$ or $\dot{G}\subset \dot{G}^i_6$ for $i=1,2,3,4,5,6$.

\begin{figure}
\begin{center}
  \includegraphics[width=15cm,height=2.25cm]{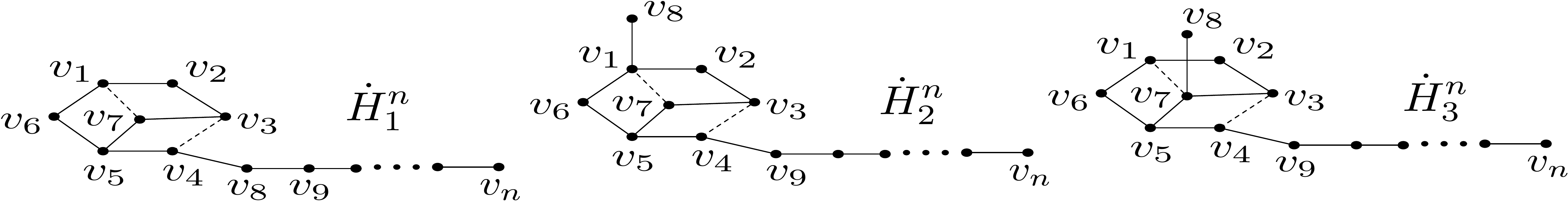}
  \end{center}
   \vskip -0.7cm\caption{ The signed graphs  $\dot{H}_1^n-\dot{H}_3^n$.}
  \label{6-4-1}
\end{figure}

 If $\dot{\Theta}_{3,1,1}\subset \dot{H}$, then $\dot{H}\subset \dot{S}_{14}$ or $\dot{H}\subset \dot{T}_{12}$. So, $7\le |V(\dot{H})|\le 13.$
  By Fig. \ref{figure4},
 if $n=7,$ then $\dot{H}$ is $\dot{\Theta}_{3,1,1};$
 if $|V(\dot{H})|=8$, then $\dot{H}\sim \dot{\Theta}_{3,1,1}^i$ for $i=1,\dots,6$; 
if   $|V(\dot{H})|=12$ or 13,   
 then
 $\dot{H}\sim \dot{S}_{14}^i$ for $i=1,2,3,4$. See Fig. \ref{6-1}. 
 Moreover,  if $9\le |V(\dot{H})|\le 11,$ then $\dot{H}$  contains one
$\dot{\Theta}_{3,1,1}^i$ ($i=1,\dots,6$) as  an induced subgraph.
 Now
 applying algorithm 1 to  $A(\dot{\Theta}_{3,1,1})$ and $A(\dot{S}_{14}^i)$ for $i=1,2,3,4$, then there is no signed graph $\dot{G}$ with order $|V(\dot{H})|+1$ and $2<\rho(\dot{G})\le\lambda^\ast,$ and  
 applying algorithm 1 to  $A(\dot{\Theta}_{3,1,1}^i)$ for $i=1,\dots,6$,
we can get all signed graphs   $\dot{G}\in \overline{\mathcal{G}}_S^{\lambda^\ast}$ (where $\dot{\Theta}_{3,1,1}^i\subset \dot{G}$) of
order $n\le 12,$ which are the induced subgraphs of
 $\dot{G}_6^7$ or  $\dot{S}_1^{12}$. Notice that $|V(\dot{G}_6^7)|=11$. See Figs. \ref{main} and \ref{main-2}.
If $n\ge 13,$ then $\dot{S}_1^{12}\subset \dot{G}$. Set $V(\dot{G})\setminus V(\dot{S}_1^{12})=\{v_{13},\dots,v_n\}.$ Since $\dot{S}_1^{12}$ is the unique signed graph of order 12  such that $\dot{\Theta}_{3,1,1}\subset\dot{G}$ and $2<\rho(\dot{G})\le\lambda^\ast$, then $v_{j}\not\sim v_i$ for each $j\ge 13$ and  all $i\le 11.$ By forbidding  $\dot{F}_2$,
then  $d_{v_i}\le 2$ for all $i\ge 12$.
 Hence, $\dot{G}\sim \dot{S}_1^n$ and $\dot{S}_1^n$ is maximal.

 If $\dot{\mathcal{X}}_1\subset \dot{H}$, then $\dot{H}\subset \dot{S}_{14}$ or $\dot{H}\subset \dot{S}_{16}.$ From above discussions, we  can suppose that
 $\dot{H}$ is $\{\dot{\Theta}_{4,2,0},\dot{\Theta}_{3,1,1}\}$-free. By Fig. \ref{figure4}, then $\dot{H}\subset \dot{G}_6^\prime$,
$\dot{H}\subset \dot{G}_6^{\prime\prime}$ or $\dot{H}\subset \dot{S}_2^{11}$. See Figs. \ref{6-1} and \ref{main-2}.
Similarly, applying algorithm 1 to  $A(\dot{H})$, then all signed graphs $\dot{G}\in\overline{\mathcal{G}}_S^{\lambda^\ast}$ of   order $n\le 14$ can be found,  which are the induced subgraphs of $\dot{H}^{14}_i$ ($i=1,2,3$) or $\dot{S}_2^{14}.$ See Figs. \ref{6-4-1} and \ref{main-2}.
If $n\ge 15,$ then     $\dot{S}_2^{14}\subset \dot{G}$ or $\dot{H}^{14}_i\subset \dot{G}$ ($i=1,2,3$).

\textbf{Case 1.} $\dot{S}_2^{14}\subset \dot{G}.$ Set $V(\dot{G})\setminus V(\dot{S}_2^{14})=\{v_{15},\dots,v_n\}.$
Since $\dot{H}^{14}_1,\dot{H}^{14}_2,\dot{H}^{14}_3$  and   $\dot{S}_2^{14}$ are the only four signed graphs of order $n=14$ such that $\dot{\mathcal{X}}_1\subset \dot{G}$ and  $2<\rho(\dot{G})\le\lambda^\ast$, then
$v_{j}\not\sim v_i$ for each $j\ge 15$ and  all $i\le 9.$
By forbidding $\dot{F}_2$,
then $d_{v_i}\le 2$ for all $i\ge 10$.
 Hence, $\dot{G}\sim \dot{S}_2^n$ and $\dot{S}_2^n$ is maximal.

\textbf{Case 2.} $\dot{H}^{14}_i\subset \dot{G}$ for $i=1,2$ or $3$. 
We will  prove that  $\dot{G}\subset \dot{S}_2^{n}.$
Set $V(\dot{G})\setminus V(\dot{H}^{14}_3)=\{v_{15},\dots,v_n\}.$
If there is a vertex $v_j$ ($j\ge 15$) adjacent to some vertices of $\{v_1,\dots,v_{13}\},$ 
then
$\dot{G}[\{v_1,\dots,v_{13},v_{j}\}]\simeq  \dot{S}_{2}^{14}$ and $\dot{S}_{2}^{14}\subset\dot{G}$. By case 1, then $\dot{G}\subset \dot{S}_2^n$.
Otherwise, $v_{j}\not\sim v_i$ for each $j\ge 15$ and  all $i\le 13$.  By forbidding $\dot{F}_2$, then
 $d_{v_i}\le 2$ for all $i\ge 9$. Hence, $\dot{G}\subset \dot{H}_3^{n}$ and 
$\dot{G}\subset \dot{S}_2^{n}.$
The proofs of the cases
 $\dot{H}^{14}_1\subset \dot{G}$ or $\dot{H}^{14}_2\subset \dot{G}$
are
similar.
\end{proof}

\subsection{$\dot{G}$ is $\dot{C}_k$-free where $k=3$ or $k\ge 5$}
In  last subsection we shall consider  that $\dot{G}$ is $\dot{C}_k$-free where $k=3$ or $k\ge 5$, i.e., $k\ne 4.$ For convenience,
let $\dot{H}$ be the  induced subgraph of $\dot{G}$ such that  $\rho(\dot{H})\le 2$.
Since $\dot{H}$ may not be unique,   we choose one that the order of $\dot{H}$ is maximal.

 \subsubsection{$\dot{\Theta}_{2,2,0}\subset \dot{G}$}

        \begin{figure}
\begin{center}
  \includegraphics[width=15cm,height=2.75cm]{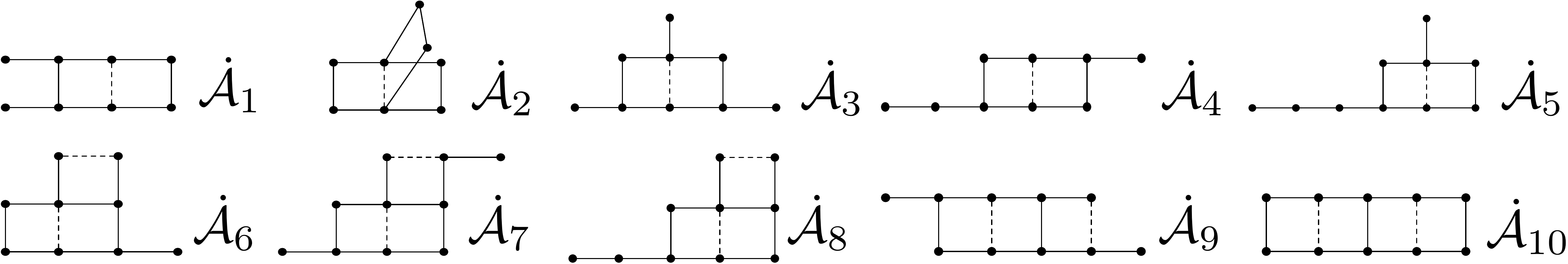}
  \end{center}
   \vskip -0.8cm\caption{ The signed graphs  $\dot{\mathcal{A}}_1-\dot{\mathcal{A}}_{10}$.}
  \label{4-6-2-4}
\end{figure}

      \begin{figure}
\begin{center}
  \includegraphics[width=15.5cm,height=4.5cm]{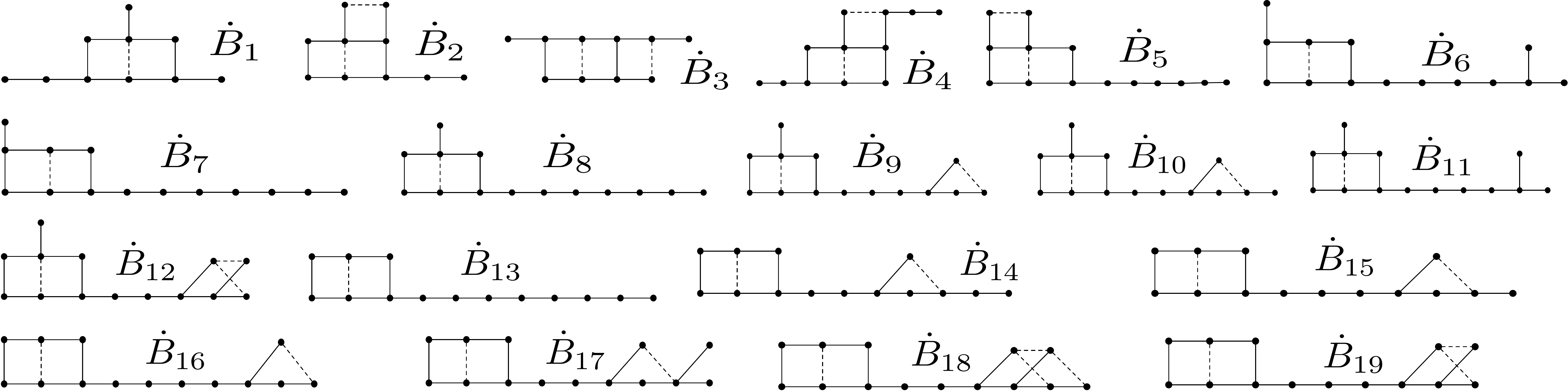}
  \end{center}
   \vskip -0.6cm\caption{ The signed graphs  $\dot{B}_1-\dot{B}_{19}$.}
  \label{4-6}
\end{figure}

   First of all,   we   focus on that  the order of  $\dot{G}$ is less than or equal to $14.$

\begin{lemma}\label{l4.6}
Let $\dot{G} \in \overline{\mathcal{G}}_S^{\lambda^\ast}$ be  a $\dot{C}_k$-free (where $k=3$ or $k\ge 5$) bipartite  signed graph of    order $n\le14$.
 If
$\dot{\Theta}_{2,2,0}\subset \dot{G}$,
then $\dot{G}$ is the induced subgraph of $\dot{H}_1,\dots,\dot{H}_5,  \dot{B}_{1},\dots,\dot{B}_{19}$ $($up to switching equivalence$)$. See Figs. \ref{main} and \ref{4-6}.
\end{lemma}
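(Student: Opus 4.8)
The plan is to grow every admissible signed graph outward from the single seed $\dot{\Theta}_{2,2,0}$ by means of Algorithm~1, exactly in the spirit of the proofs of Lemmas~\ref{8} and~\ref{c61}. Since $\dot{\Theta}_{2,2,0}\subset\dot{G}$ by hypothesis and $\dot{\Theta}_{2,2,0}$ has only six vertices (two of degree $3$ and four of degree $2$, its only induced cycles being two copies of $\dot{C}_4$), I would set $A_0=A(\dot{\Theta}_{2,2,0})$ and iterate the algorithm $k=8$ times, so that the output collects every candidate of order at most $14$. Completeness of this enumeration is guaranteed by interlacing (Lemma~\ref{Lem2.5}): given any $\dot{G}$ satisfying the hypotheses, fix a chain $\dot{\Theta}_{2,2,0}=\dot{G}_0\subset\dot{G}_1\subset\cdots\subset\dot{G}_{n-6}=\dot{G}$ of induced subgraphs in which $\dot{G}_{i}$ is obtained from $\dot{G}_{i-1}$ by adjoining a single vertex; then $\rho(\dot{G}_i)\le\rho(\dot{G})\le\lambda^\ast$ for every $i$, so no member of the chain is rejected at an intermediate step, and $\dot{G}$ itself is produced at step $n-6$.

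At each iteration the search is kept finite and small by the structural facts already established. Lemma~\ref{l3.4} forces every appended row $\mathbf{r}$ to satisfy $\sum_i|r_i|\le 4$ and keeps all degrees at most $4$; the bipartiteness hypothesis forbids any attachment that closes an odd cycle; and, as $\dot{G}$ is $\dot{C}_k$-free for every $k\ne 4$, I would reject any extension that creates an induced cycle of length other than $4$. The forbidden configurations of Lemma~\ref{lem-2.7}---the graphs $\dot{F}_1$--$\dot{F}_{11}$, the unbalanced odd $\dot{\mathcal{C}}_{2\ell+1}^1$, and the unsigned graphs outside $\mathcal{G}^{\lambda^\ast}$---supply further immediate pruning. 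Because switching preserves the spectrum, I would record each survivor only up to switching equivalence, normalizing the signs along a spanning tree to be positive and letting only the signs on the remaining (non-tree) edges vary; this collapses the $\{-1,0,1\}$ sign vectors to very few genuinely distinct cases.

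The computation terminates with a finite list, and the remaining, most laborious task is the identification step: one must verify that every survivor is, up to switching, an induced subgraph of one of the twenty-four maximal signed graphs $\dot{H}_1,\dots,\dot{H}_5,\dot{B}_1,\dots,\dot{B}_{19}$ of Figs.~\ref{main} and~\ref{4-6}. I expect the output to organize itself according to how the extra vertices distribute among the two degree-$3$ vertices and the four degree-$2$ vertices of $\dot{\Theta}_{2,2,0}$, with the maximal element of each resulting family being exactly one of the listed $\dot{H}_i$ or $\dot{B}_j$; every valid $\dot{G}$ with $n\le 14$ then embeds in such a maximal graph. The main obstacle is not any individual spectral estimate but the size and correct bookkeeping of this case analysis. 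The bound $n\le 14$ is precisely what renders the Algorithm-1 search tractable, while the complementary regime $n\ge 15$ is to be settled by a separate, inductive argument, as in the large-order cases of Lemma~\ref{c61}.
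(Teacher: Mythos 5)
Your proposal is correct in substance and uses the same engine as the paper --- Algorithm~1 plus the interlacing/chain argument for completeness --- but it differs in one real way: you seed the search with the six-vertex graph $\dot{\Theta}_{2,2,0}$ itself and extend eight times, whereas the paper first exploits the McKee--Smyth classification of $\mathcal{G}_S^{2}$. Concretely, the paper takes the maximal induced subgraph $\dot{H}\supset\dot{\Theta}_{2,2,0}$ of $\dot{G}$ with $\rho(\dot{H})\le 2$, observes that $\dot{H}$ must then sit inside $\dot{S}_{14}$ or $\dot{S}_{16}$, reads off from Fig.~\ref{figure4} that $\dot{H}$ is an induced subgraph of one of ten explicit signed graphs $\dot{\mathcal{A}}_1,\dots,\dot{\mathcal{A}}_{10}$ of order at most $10$ (Fig.~\ref{4-6-2-4}), and only then runs Algorithm~1 from those seeds, so at most four extension rounds are needed. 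Your route buys simplicity --- you never have to justify the (not entirely trivial) claim that a core containing $\dot{\Theta}_{2,2,0}$ cannot live in $\dot{T}_{2k}$ and must therefore embed in $\dot{S}_{14}$ or $\dot{S}_{16}$ --- at the cost of a much larger search tree: eight levels of $\{-1,0,1\}$-vectors with at most four nonzero entries, only partially tamed by your pruning rules and switching normalization. Your completeness argument (a vertex-by-vertex chain of induced subgraphs, each surviving the spectral and degree filters by Lemmas~\ref{Lem2.5} and~\ref{l3.4}) is sound and is exactly the justification the paper leaves implicit. The one thing to be candid about is that, like the paper, you do not actually exhibit the terminal computation or the matching of survivors against $\dot{H}_1,\dots,\dot{H}_5,\dot{B}_1,\dots,\dot{B}_{19}$; in both versions the lemma ultimately rests on an unexhibited finite computer search, so your proposal is not weaker than the paper on this point, merely more expensive to execute.
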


\begin{proof}
 If
$\dot{\Theta}_{2,2,0}\subset \dot{G}$, 
then $\dot{\Theta}_{2,2,0}\subset \dot{H}$.
Obviously, $\dot{H}\subset \dot{S}_{14}$ or $\dot{H}\subset\dot{S}_{16}.$
By Fig. \ref{figure4}, then $\dot{H}\subset \dot{\mathcal{A}}_i$ for $i=1,\dots,10$ and $|V(\dot{H})|\le 10.$  See Fig. \ref{4-6-2-4}.
Applying algorithm 1 to  $A(\dot{H}),$ then all  signed graphs  $\dot{G}\in \overline{\mathcal{G}}_S^{\lambda^\ast}$ of  order $n\le 14$ can be found, which are the induced subgraphs of $\dot{H}_1,\dots,\dot{H}_5,\dot{B}_{1},\dots,\dot{B}_{19}$ (up to switching equivalence).
\end{proof}
\begin{remark}\label{r4.6}
$(1)$ By algorithm 1, we can check that the signed graphs $\dot{B}_1,\dot{B}_2,\dot{B}_3$ and $\dot{B}_4$ are maximal $\dot{C}_k$-free (where $k=3$ or $k\ge 5$)   signed graphs.

 $(2)$ $\dot{B}_1\subset \dot{G}_8^3,$    $\dot{B}_2\subset \dot{G}_8^3$, $\dot{B}_3\subset \dot{G}_6^6,$ $\dot{B}_4\subset \dot{G}_{10}$,
  $\dot{B}_i\subset [\dot{G}^{12}_4,v_{12},s]$ for  $i=5,\dots,8$ and    $\dot{B}_i\subset [\dot{G}_{2}^9,v_{9},s]$ for  $i=9,\dots,19$.

$(3)$ If $n\ge 15,$ then $\dot{G}$ is $\{\dot{H}_1,\dots,\dot{H}_5,\dot{B}_1,\dots,\dot{B}_4\}$-free.
Furthermore, let $V^\prime$ be the vertex subset of $V(\dot{G})\setminus V(\dot{\Theta}_{2,2,0})$ with order $8$, then $\dot{G}[V(\dot{\Theta}_{2,2,0})\cup V^\prime]$ is disconnected or  switching equivalent to $\dot{B}_{i}$ for one $i=5,\dots,19.$ 
   \end{remark}
Secondly, we consider that  $n\ge 15$.  By Lemma \ref{l4.6}, then
 $\dot{G}_i^{m_i}$    ($i=1,2,3$ or $4$)  is an induced subgraph of $\dot{G},$ where $m_1\ge 9, $
$m_2\ge 10,$  $m_3\ge 12$ and $m_4\ge 14.$ See   Fig. \ref{main-2}.
Without loss of generality,  assume that the choice of  $m_i$ is  largest.
 Let
\begin{equation}\label{eq4.1}
\begin{split}
&V(\dot{G}_1^{m_1})=V_1\cup V_2,~\text{where}~V_1=\{v_1,\dots,v_8\} ~\text{and}~V_2=\{v_9,\dots,v_{m_1}\},  \\
&V(\dot{G}_2^{m_2})=V_1\cup V_2,~\text{where}~V_1=\{v_1,\dots,v_9\} ~\text{and}~V_2=\{v_{10},\dots,v_{m_2}\},  \\
&V(\dot{G}_3^{m_3})=V_1\cup V_2,~\text{where}~V_1=\{v_1,\dots,v_{11}\} ~\text{and}~V_2=\{v_{12},\dots,v_{m_3}\},  \\
&V(\dot{G}_4^{m_4})=V_1\cup V_2,~\text{where}~V_1=\{v_1,\dots,v_{12}\} ~\text{and}~V_2=\{v_{13},\dots,v_{m_4}\}.
\end{split}
\end{equation}
\begin{lemma}\label{r4.8}
Let $x_1$ and $x_2$ be two  vertices 
  in  $V(\dot{G})\setminus V(\dot{G}_i^{m_i})$ $(i=1,2,3$ or $4)$. Then
   $d_{V_1}(x_1)=0$ or $d_{V_2}(x_1)=0.$ 
  Furthermore, if $x_1\sim v_i$ and $x_2\sim v_j$ where $v_i\in V_1$ and $v_j\in V_2,$ then $x_1\not\sim x_2$.
\end{lemma}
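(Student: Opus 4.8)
The plan is to exploit two global constraints on $\dot{G}$ together with the rigid shape of $\dot{G}_i^{m_i}$. First, since $\dot{G}$ is bipartite and $\dot{C}_k$-free for $k=3$ and for every $k\ge 5$, every cycle of $\dot{G}$ has length exactly $4$. Second, each $\dot{G}_i^{m_i}$ splits, by its figure, into a fixed \emph{head} $\dot{G}[V_1]$ (which carries the subgraph $\dot{\Theta}_{2,2,0}$) joined to a \emph{tail} $\dot{G}[V_2]$ that is a pendant path attached to the head at a single junction; in particular any path from a vertex of $V_1$ to a vertex of $V_2$ passes through that junction.

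For the first assertion I would argue by contradiction: suppose $x_1$ has a neighbour $v_a\in V_1$ and a neighbour $v_b\in V_2$. Then the two edges $x_1 v_a,\ x_1 v_b$ together with a shortest $v_a$--$v_b$ path in the connected graph $\dot{G}_i^{m_i}$ close a cycle of length $2+d_{\dot{G}_i^{m_i}}(v_a,v_b)$. Because all cycles of $\dot{G}$ have length $4$, this forces $d_{\dot{G}_i^{m_i}}(v_a,v_b)=2$, and since the only $V_1$--$V_2$ route runs through the junction, this localises $v_b$ to the first one or two tail vertices and pins $v_a$ to the junction region of the head. The resulting local picture is a new $4$-cycle glued to the head through $x_1$; I would then read off from the figure of $\dot{G}_i^{m_i}$ that this $4$-cycle, together with the $\dot{\Theta}_{2,2,0}$ already sitting in $V_1$ and the remaining tail, contains a forbidden subgraph (one of the trees $T_{a,b,c}$, one of the $Q_{a,b,c}$, or some $\dot{F}_j$ of Lemma \ref{lem-2.7}) of spectral radius exceeding $\lambda^\ast$, a contradiction. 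Equivalently one may invoke Remark \ref{r4.6}(3): taking an $8$-set $V'\ni x_1,v_b$ that completes a connected bridge to $\dot{\Theta}_{2,2,0}$, the induced subgraph $\dot{G}[V(\dot{\Theta}_{2,2,0})\cup V']$ is connected yet not switching-equivalent to any $\dot{B}_i$ with $i=5,\dots,19$.

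For the second assertion I would combine the first with the same cycle-length bookkeeping. Assume $x_1\sim v_i\in V_1$, $x_2\sim v_j\in V_2$ and $x_1\sim x_2$; by the first part $x_1$ meets $\dot{G}_i^{m_i}$ only inside $V_1$ and $x_2$ only inside $V_2$. The length-$3$ path $v_i$--$x_1$--$x_2$--$v_j$ together with a shortest $v_i$--$v_j$ path in $\dot{G}_i^{m_i}$ forms a cycle of length $3+d_{\dot{G}_i^{m_i}}(v_i,v_j)$; bipartiteness makes $d_{\dot{G}_i^{m_i}}(v_i,v_j)$ odd, and the only-$4$-cycles property forces it to equal $1$. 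Hence $v_i\sim v_j$, so $v_iv_j$ is exactly the head--tail junction edge and $v_i x_1 x_2 v_j$ is a second $v_i$--$v_j$ path parallel to it. As before, this parallel path combined with $\dot{\Theta}_{2,2,0}$ and the tail produces a forbidden subgraph, a contradiction.

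The main obstacle will be the final step of each part: converting the localised $4$-cycle (respectively the parallel junction path) into an explicit forbidden configuration. This is genuinely figure-dependent and must be checked separately for the four heads $i=1,2,3,4$, since the position of the junction and of $\dot{\Theta}_{2,2,0}$ differs in each. The bipartite-plus-$4$-cycle reduction does the conceptual work and collapses the problem to a bounded local pattern, but identifying precisely which $T_{a,b,c}$, $Q_{a,b,c}$ or $\dot{F}_j$ appears, or which $\dot{B}_i$ is contradicted via Remark \ref{r4.6}(3), is where the case analysis concentrates.
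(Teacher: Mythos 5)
Your proposal follows essentially the same route as the paper's proof: assume the forbidden adjacency and derive a contradiction either from an induced cycle $\dot{C}_\ell$ with $\ell\ge 5$ or from Remark \ref{r4.6}(3), namely that every connected $14$-vertex induced subgraph of $\dot{G}$ containing $\dot{\Theta}_{2,2,0}$ must be switching equivalent to one of $\dot{B}_5,\dots,\dot{B}_{19}$. The one caveat is that your opening reduction (``every cycle of $\dot{G}$ has length exactly $4$, hence $d_{\dot{G}_i^{m_i}}(v_a,v_b)=2$'') is literally valid only for \emph{induced} cycles, so the distance claim needs a small patch (e.g.\ passing to a closest pair of neighbours of $x_1$), but this is the same level of informality as the paper's own argument and does not change the substance.
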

\begin{proof}
Suppose on the contrary that
 $d_{V_1}(x_1)\ge 1$ and $d_{V_2}(x_1)\ge 1.$ 
Then
 $x_1\sim v_i$ and $x_1\sim v_j$ where $v_i\in V_1$ and $v_j\in V_2.$ 
If $j\ge 15$, then $\{v_{i},v_{i+1},\dots,v_{j},x_1\}$ induces a cycle $\dot{C}_\ell$ $(\ell \ge 5)$,  which is a contradiction.
If   $j\le 14,$ then $\dot{G}$ contains a $\dot{C}_\ell$ $(\ell \ge 5)$ or
 $\dot{G}[\{v_1,\dots,v_{13}, x_1\}]$ is not switching equivalent to $\dot{B}_{i}$ for $i=5,\dots,19,$  which is a contradiction. So
   $d_{V_1}(x_1)=0$ or $d_{V_2}(x_1)=0.$ Furthermore,
   if  $x_1\sim v_i$ and $x_2\sim v_j$ where $v_i\in V_1$ and $v_j\in V_2,$ and if
 $x_1\sim x_2$, then $\dot{G}[\{v_1,\dots,v_{12}, x_1,x_2\}]$ contains a $\dot{C}_\ell$ $(\ell \ge 5)$ or is not switching equivalent to $\dot{B}_{i}$ for $i=5,\dots,19,$  which is a contradiction. 
 So $x_1\not\sim x_2$.
\end{proof}

 Then we let \begin{equation}\label{eq4.2}
\begin{split}V(\dot{G})\setminus V(\dot{G}_i^{m_i})=U_0\cup U_1\cup U_2,~\text{where $i=1,2,3$ or 4,}\end{split}
\end{equation} where
 each vertex in $U_0$ is adjacent to no vertex of $V(\dot{G}_i^{m_i}),$  each vertex in $U_1$ is adjacent to some vertices of $V_1$ and
 each vertex in $U_2$ is adjacent to some vertices of $V_2.$
 
Set $|U_i|=a_i$ for $i=0,1,2$, then we have

\begin{lemma}\label{l4.9}
$(1)$ If $U_0$ is nonempty, then $a_0= 1$ and there is a  vertex $w_2$ in $U_2$ such that 
 $\dot{G}[V(\dot{G}_i^{m_i})\cup\{w_2\}\cup  U_0]\sim \dot{\mathcal{A}}_{2i+5}^{n_1,n_2}$ for $i=1,2,3,4$. See Fig. \ref{q5}.
 
 $(2)$ If $\dot{G}_i^{m_i}\subset \dot{G}$ for $i=1,2$, then 
   $\dot{G}[V_1\cup U_1]\subset  \dot{G}_4^{10}$; if $\dot{G}_i^{m_i}\subset \dot{G}$ for $i=3,4$, then 
   $\dot{G}[V_1\cup U_1]\subset  \dot{G}_4^{12}.$
\end{lemma}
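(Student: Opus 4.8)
The plan is to treat the two parts separately, exploiting the fact that $\dot{G}_i^{m_i}$ consists of a dense ``core'' $V_1$ carrying the $\dot{\Theta}_{2,2,0}$ together with a pendant path tail $V_2$, and that by Lemma~\ref{r4.8} every external vertex is attached to at most one of these two blocks, with no edge joining a $V_1$-neighbour to a $V_2$-neighbour. Thus the only genuinely new content is the behaviour of the vertices of $U_0$, which touch neither block directly.

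For part $(1)$, suppose $U_0\neq\emptyset$ and pick $w_0\in U_0$. First I would use the connectivity of $\dot{G}$ together with $\dot{C}_k$-freeness for $k\ge 5$ to show that no vertex of $U_0$ can simultaneously reach $V_1$ through $U_1$ and $V_2$ through $U_2$: such a vertex would splice a short external path onto the long internal path joining the two ends of $\dot{G}_i^{m_i}$ and thereby produce an induced $\dot{C}_\ell$ with $\ell\ge 5$ (or violate the $\dot{B}_i$-description of Remark~\ref{r4.6}$(3)$), which is excluded. Hence each component of $\dot{G}[U_0]$ is fastened to only one side. Next I would discard the $V_1$ side: hanging any further branch near the core $V_1$, which already realizes the $\dot{\Theta}_{2,2,0}$, forces an induced $\dot{F}_1$ or an oversized $T_{a,b,c}$ or $Q_{a,b,c}$ of spectral radius exceeding $\lambda^\ast$, contradicting Lemma~\ref{lem-2.7}. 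Therefore $U_0$ can be attached only to the $V_2$ tail, necessarily through a single vertex $w_2\in U_2$, and the same forbidden-subgraph bookkeeping (chiefly $\dot{F}_1$, which caps the branching at $w_2$) collapses the attached part to one vertex, giving $a_0=1$. Matching the resulting configuration with Fig.~\ref{q5} then identifies $\dot{G}[V(\dot{G}_i^{m_i})\cup\{w_2\}\cup U_0]$ with $\dot{\mathcal{A}}_{2i+5}^{n_1,n_2}$ for the corresponding $i$.

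For part $(2)$, I would restrict attention to $\dot{G}[V_1\cup U_1]$, an induced subgraph of spectral radius at most $\lambda^\ast$ that contains the fixed small core $\dot{G}_i^{m_i}[V_1]$ (of order $8,9,11,12$ for $i=1,2,3,4$) with every vertex of $U_1$ hanging off $V_1$. Since $\rho\le\lambda^\ast$ and $\Delta(\dot{G})\le 4$ by Lemma~\ref{l3.4}, the admissible extensions form a finite set, which can be generated by Algorithm~1 applied to $A(\dot{G}_i^{m_i}[V_1])$; the run returns that every such extension is an induced subgraph of $\dot{G}_4^{10}$ when $i\in\{1,2\}$ and of $\dot{G}_4^{12}$ when $i\in\{3,4\}$, which is exactly the assertion.

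The main obstacle is part $(1)$: establishing $a_0=1$ and pinning down the precise shape $\dot{\mathcal{A}}_{2i+5}^{n_1,n_2}$ rather than some larger attachment. The coarse separation (no bridge between the two ends) follows cleanly from the cycle-length argument, but the fine identification requires a complete account of which members of the forbidden list $\dot{F}_j$ and which $T_{a,b,c}$ or $Q_{a,b,c}$ configurations are triggered by each candidate position of $w_2$ along the tail; where this casework becomes unwieldy I would fall back on a direct verification through Algorithm~1.
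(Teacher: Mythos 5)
Your part $(1)$ follows essentially the paper's route: take a shortest path from $U_0$ to the core graph, rule out its landing in $V_1$ via the order-$14$ classification of Remark~\ref{r4.6}~$(3)$, and then collapse the attachment by forbidden subgraphs. Two ingredients are missing, though. First, the paper's argument that the path has length exactly $2$ \emph{and} that its intermediate vertex $w_\ell$ satisfies $d_{V_2}(w_\ell)=2$ rests not only on $\dot{F}_1$- and $\dot{F}_2$-freeness but crucially on $\dot{G}_i^{m_i+1}$-freeness, i.e.\ on the maximality of $m_i$ fixed just before the lemma: without it, $w_\ell$ could be a pendant vertex extending the tail $V_2$, and the configuration would not be $\dot{\mathcal{A}}_{2i+5}^{n_1,n_2}$ (whose attaching vertex meets two vertices of the tail, forming the $\dot{Q}'$-type end). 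Your ``chiefly $\dot{F}_1$'' does not capture this. Second, the paper proves $a_0=1$ as a separate step: assuming two distinct vertices of $U_0$, each with its own attaching vertex in $U_2$, one exhibits $\dot{F}_1$, $\dot{F}_2$ or $\dot{F}_3$; your proposal merges this into the same ``bookkeeping'' without indicating what two coexisting attachments would force.

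For part $(2)$ you take a genuinely different and, as stated, gappy route. The paper does not run Algorithm~1 on $A(\dot{G}_i^{m_i}[V_1])$; it completes $V_1\cup U_1$ by choosing $14-(|V_1|+a_1)$ further vertices from $V_2\cup U_0\cup U_2$ so that the result is a connected induced subgraph of order $14$, which by Remark~\ref{r4.6}~$(3)$ must be switching equivalent to some $\dot{B}_i$ $(i=5,\dots,19)$; the containments $\dot{G}[V_1\cup U_1]\subset\dot{G}_4^{10}$ or $\dot{G}_4^{12}$ are then read off from the structure of those $\dot{B}_i$. Your plan to enumerate extensions of $V_1$ alone ignores the coexisting tail $V_2$ (nonempty since $m_1\ge 9$, etc.), which is exactly the constraint that bounds $|U_1|$ and prunes the candidate list: an extension of $\dot{G}_i^{m_i}[V_1]$ with $\rho\le\lambda^\ast$ in isolation need not survive once the tail is reattached, and conversely you have no a priori bound on $|U_1|$ guaranteeing that the enumeration terminates. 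The order-$14$ classification of Lemma~\ref{l4.6} supplies both the bound and the coexistence constraint, so you should route part $(2)$ through Remark~\ref{r4.6}~$(3)$ as the paper does, rather than through a fresh computation on $V_1$.
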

\begin{proof}
(1) Let $w_1$ be a vertex in $U_0.$
 Then
 there is a shortest path $P_{w_1v_j}=w_1\dots w_\ell v_j$ ($\ell\ge 2$) from   $w_1$ to a vertex $v_j\in V(\dot{G}_i^{m_i})$. 
 If $v_j\in V_1,$  we can find a connected induced subgraph $\dot{G}^\prime$ of $\dot{G}$ with order $|V(\dot{G}^\prime)|=14$ and is not switching equivalent to $\dot{B}_{i}$ for $i=5,\dots,19,$ which contradicts to Remark \ref{r4.6} (3). Then $v_j\in V_2.$
 Since $\dot{G}$ is $\{\dot{F_1},\dot{F}_2,\dot{G}_i^{m_i+1}\}$-free, then $\ell=2$ and $d_{V_2}(w_\ell)=2$. Therefore, $\dot{G}[V(\dot{G}_i^{m_i})\cup \{w_1,w_2\}]\sim \dot{\mathcal{A}}_{2i+5}^{n_1,n_2}$ for $i=1,2,3,4.$ If 
 $a_0\ge 2,$ then there is another vertex $w_1^\prime\ne w_1$ in $U_0$ and a vertex  $w_2^\prime$ in $U_2$ such that 
  $\dot{G}[V(\dot{G}_i^{m_i})\cup \{w_1^\prime,w_2^\prime\}]\sim \dot{\mathcal{A}}_{2i+5}^{n_1,n_2}$. And now we can check that  $\dot{F}_1\subset \dot{G},$ $\dot{F_2}\subset \dot{G}$ or $\dot{F}_3\subset \dot{G},$ which contradicts to Lemma \ref{lem-2.7}.  Hence, $a_0= 1.$
  
  (2) Choose $14-(|V_1|+a_1)$ vertices (say vertex set  $V_1^\prime$) from $V_2\cup U_0\cup U_2$ such that $\dot{G}[V_1\cup V_1^\prime]$ is connected. By  Remark \ref{r4.6} (3),
 then $\dot{G}[V_1\cup V_1^\prime\cup U_1]\sim \dot{B}_i $ for one  $i=5,\dots, 19.$ It is not hard to see that 
if $\dot{G}_i^{m_i}\subset \dot{G}$ for $i=1,2$, then 
   $\dot{G}[V_1\cup U_1]\subset  \dot{G}_4^{10}$; if $\dot{G}_i^{m_i}\subset \dot{G}$ for $i=3,4$, then 
   $\dot{G}[V_1\cup U_1]\subset  \dot{G}_4^{12}.$
\end{proof}

  \begin{figure}
\begin{center}
  \includegraphics[width=8cm,height=1.75cm]{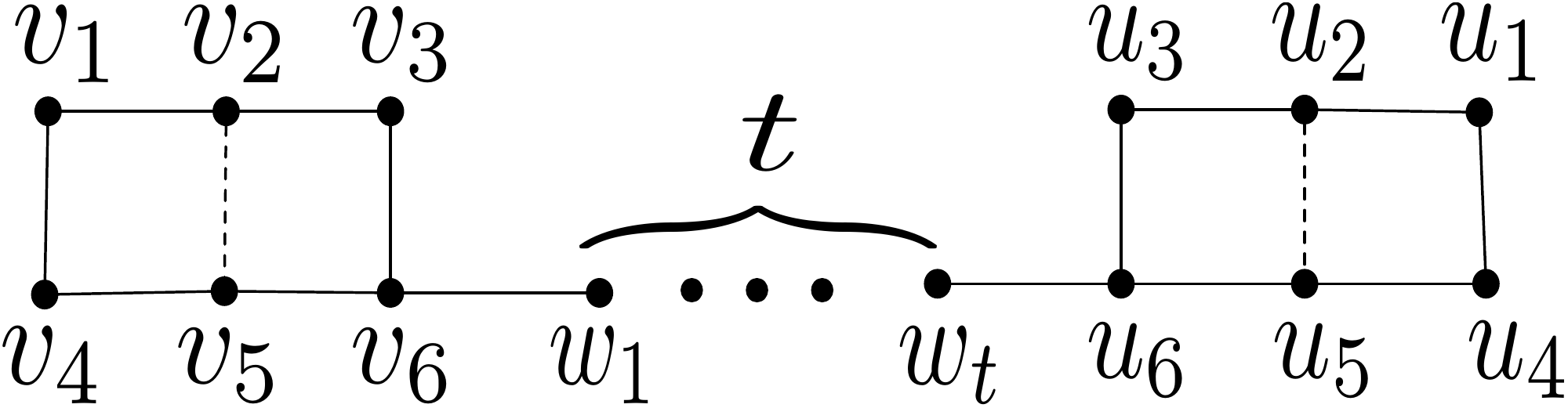}
  \end{center}
   \vskip -0.6cm\caption{ The signed graph  $\dot{\varTheta}_{2,2,0}^t$.}
  \label{4-6-1-2}
\end{figure}

The next  lemma deals with the case that $\dot{\varTheta}_{2,2,0}^t\subset \dot{G}$. See Fig. \ref{4-6-1-2}.
Note that $\dot{\mathcal{A}}_6^{3,t}\sim \dot{\varTheta}_{2,2,0}^t-u_2.$ 
By Lemma \ref{l3.5} (6), we have $t\ge 6.$ 
Then we
let  $$V(\dot{G})\setminus V(\dot{\varTheta}_{2,2,0}^t)=X_0\cup X_1\cup X_2\cup X_3,$$ where 
each vertex in $X_0$ is adjacent to no vertices of $V(\dot{\varTheta}_{2,2,0}^t),$
 each vertex in $X_1$ is adjacent to some vertices of $\{v_1,\dots, v_6,w_1,w_2\}$, each vertex in $X_2$ is adjacent to some vertices of $\{u_1,\dots, u_6,w_{t-1},w_t\}$ and each vertex in $X_3$ is adjacent to some vertices of $\{w_3,\dots,w_{t-2}\}.$ 
By Lemma \ref{r4.8}, then  $X_1\cap X_2=\emptyset$, $X_1\cap X_3=\emptyset$,
$X_2\cap X_3=\emptyset$ and
there is  no edge  between three parts $X_1$, $X_2$ and $X_3.$
If $X_0\ne \emptyset,$ let $u_0\in X_0,$  Lemma \ref{l4.9} (1) implies that there is a path $P_{u_0z_i}=u_0u_1w_i$ (where $u_1\in X_3$) from the vertex $u_0$ to the vertex $w_i$. Then $\dot{F}_3\subset \dot{G},$ which contradicts to Lemma \ref{lem-2.7}. So $X_0=\emptyset.$

\begin{lemma}\label{lem4.9}
Let $\dot{G} \in \overline{\mathcal{G}}_S^{\lambda^\ast}$ be  a   $\dot{C}_k$-free $(k\ne 4)$ bipartite signed graph of  order $n\ge15$.   If $\dot{\varTheta}_{2,2,0}^t\subset \dot{G}$, then $\dot{G}$ is the induced subgraph of $\dot{H}_6$,  $\dot{H}_7$, $\dot{H}_8$, $[\dot{G}_2^9,v_{9},s,v_{9},\dot{G}_2^9]$, $[\dot{G}_4^{12},v_{12},s,v_{9},\dot{G}_2^9]$ or
 $[\dot{G}_4^{12},v_{12},s,v_{12},\dot{G}_4^{12}].$ See Figs. \ref{main} and \ref{main-2-1}.
\end{lemma}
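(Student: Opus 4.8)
The plan is to build on the decomposition $V(\dot{G})\setminus V(\dot{\varTheta}_{2,2,0}^t)=X_1\cup X_2\cup X_3$ already established (recall that $X_0=\emptyset$, the three parts are pairwise nonadjacent, and $X_1$, $X_2$, $X_3$ attach respectively to the left $\dot{\Theta}_{2,2,0}$ on $\{v_1,\dots,v_6,w_1,w_2\}$, the right $\dot{\Theta}_{2,2,0}$ on $\{u_1,\dots,u_6,w_{t-1},w_t\}$, and the interior $\{w_3,\dots,w_{t-2}\}$ of the connecting path). Because there are no edges among the three parts, $\dot{G}$ is assembled from three independent local pieces, so the whole problem reduces to (a) classifying each end, (b) classifying the middle, and (c) checking which gluings survive.

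First I would classify each end. The subgraph $\dot{L}:=\dot{G}[\{v_1,\dots,v_6,w_1,w_2\}\cup X_1]$ is a connected, $\dot{C}_k$-free $(k\ne4)$, bipartite signed graph containing $\dot{\Theta}_{2,2,0}$, and by the pairwise nonadjacency of the $X_i$ it meets the rest of $\dot{\varTheta}_{2,2,0}^t$ only at the left end. Appealing to the $\dot{B}_i$-classification of $8$-vertex extensions in Remark \ref{r4.6}(3) together with Lemma \ref{l4.9}, I would conclude that $\dot{L}$ is an induced subgraph of $\dot{G}_2^9$ with the connector attaching at $v_9$, or of $\dot{G}_4^{12}$ with the connector attaching at $v_{12}$. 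The identical argument applied to $\dot{G}[\{u_1,\dots,u_6,w_{t-1},w_t\}\cup X_2]$ classifies the right end in the same two ways.

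Next I would pin down the middle and glue. Each vertex of $X_3$ attaches only to an interior path vertex $w_j$ $(3\le j\le t-2)$; forbidding $\dot{F}_1$, $\dot{F}_2$, $\dot{F}_3$, the long trees $T_{a,b,c}$, and all induced cycles $\dot{C}_k$ with $k\ne4$ forces each such attachment to be a short pendant producing a $4$-cycle of the prescribed sign, so that the interior path together with its $X_3$-attachments is an induced $\dot{T}_{2s}^{\prime\prime\prime}$ --- exactly the connector in the bracket construction $[\,\cdot\,,\cdot,s,\cdot,\cdot\,]$; here the bound $t\ge6$ from Lemma \ref{l3.5}(6) and the thresholds recorded in Lemma \ref{l3.5} prevent the teeth from accumulating. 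Combining the end classification with the middle, and using the symmetry that swaps the two ends, $\dot{G}$ embeds into one of $[\dot{G}_2^9,v_9,s,v_9,\dot{G}_2^9]$, $[\dot{G}_4^{12},v_{12},s,v_9,\dot{G}_2^9]$, or $[\dot{G}_4^{12},v_{12},s,v_{12},\dot{G}_4^{12}]$ when the connector is long, while the finitely many short-connector configurations are absorbed into the maximal graphs $\dot{H}_6$, $\dot{H}_7$, $\dot{H}_8$ (confirmed by Algorithm 1). The main obstacle I expect is the bookkeeping at the two junctions between an end gadget and the connector: one must rule out every attachment that would create a long induced cycle, a degree-$5$ vertex, or one of the $\dot{F}_i$, and simultaneously verify that no surviving configuration pushes $\rho(\dot{G})$ beyond $\lambda^\ast$ --- which is precisely where the explicit $\dot{\mathcal{A}}_i$ thresholds of Lemma \ref{l3.5} and the finite verifications of Algorithm 1 carry the argument.
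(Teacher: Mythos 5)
Your proposal follows essentially the same route as the paper: it uses the same three-part decomposition of $V(\dot{G})\setminus V(\dot{\varTheta}_{2,2,0}^t)$ with no edges between the parts, classifies the two ends via Lemma \ref{l4.9}(2) and Remark \ref{r4.6}(3), forces the interior attachments to form the $\dot{T}_{2s}^{\prime\prime\prime}$-connector by the same forbidden-subgraph argument, and disposes of the short-connector cases via $\dot{H}_6$--$\dot{H}_8$ and the thresholds of Lemma \ref{l3.5}. The only cosmetic difference is that you classify the ends directly as subgraphs of $\dot{G}_2^9$ or $\dot{G}_4^{12}$, whereas the paper first allows all four end types $\dot{G}_1^8,\dot{G}_2^9,\dot{G}_3^{11},\dot{G}_4^{12}$ and then reduces to the same list.
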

\begin{proof}

 By Lemma \ref{l4.9} (2), then $\dot{G}[V(\dot{\varTheta}_{2,2,0}^t)\cup X_1 \cup X_2]\sim 
 (\dot{G}_i^{m_i},v_{m_i},s_1,v_{m_j},\dot{G}_j^{m_j})$, where $(\dot{G}_i^{m_i},v_{m_i}),$ $(\dot{G}_j^{m_j},v_{m_j})\in \{(\dot{G}_1^{8},v_{8}),(\dot{G}_2^{9},v_{9}),(\dot{G}_3^{11},v_{11}),$ $(\dot{G}_4^{12},v_{12})\}$,  $s_1\ge 3$ if $i=4$ or $j=4$ (by  forbidding $\dot{F}_{10}$ and $\dot{F}_{11}$) and   $s_1\ge 2$ otherwise (by Lemma \ref{l3.5} (6) and (8)).
 
Let $X_3=\{x_{i_1},\dots,x_{i_\ell}\}$,  then $d_{\dot{\varTheta}_{2,2,0}^t}(x_{i_j})=2$ (by forbidding $\dot{F}_3$).
Up to switching equivalence, let $x_{j}\mathop{\sim}\limits^{+} w_{j}$ and $x_{j}\mathop{\sim}\limits^{-} w_{j+2}$  for  $j=i_1,\dots,i_\ell,$ where $j\ge 3$ and $j+2\le t-2$.
Since $\dot{G}$ is $\{\dot{F}_3,\dot{C}_k,\dot{C}^1_4\}$-free ($k\ne 4$),
then  $i_{j_1}\ne i_{j_2}$ if  $j_1\ne j_2$, $x_{i_{j_1}}\mathop{\sim}\limits^{-}x_{i_{j_2}}$ if $|i_{j_1}-i_{j_2}|=1$ and
 $x_{i_{j_1}}\not\sim x_{i_{j_2}}$ if $|i_{j_1}-i_{j_2}|\ge 2$.
So,  $\dot{G}[V(\dot{\varTheta}_{2,2,0}^t) \cup X_3]\subset [\dot{G}_1^{8},v_{8},s_2,v_{8},\dot{G}_1^{8}]$ ($s_2\ge 3$).

If $s_1=2,$ then $\dot{G}\subset  \dot{H}_i$  for $i=6,7,8$.
If $s_1\ge 3,$ then  
 $\dot{G}\subset [\dot{G}_i^{m_i},v_{m_i},s,v_{m_j},\dot{G}_j^{m_j}]$  ($s\ge 3$), where $i,j\in \{1,2,3,4\}.$
By Lemma \ref{l3.5} (6), (8) and (10), then $m_i,m_j\ge 8$ if $i,j=1;$ $m_i,m_j\ge 9$ if $i,j=2;$ $m_i,m_j\ge 11$ if $i,j=3;$ $m_i,m_j\ge 12$ if $i,j=4.$ 
Hence, $\dot{G}$ is the induced subgraph of  $[\dot{G}_2^9,v_{9},s,v_{9},\dot{G}_2^9]$, $[\dot{G}_4^{12},v_{12},s,v_{9},\dot{G}_2^9]$ or
 $[\dot{G}_4^{12},v_{12},s,v_{12},\dot{G}_4^{12}]$.
\end{proof}

Next we  assume  that $\dot{G}$ is $\dot{\varTheta}_{2,2,0}^t$-free. 

\begin{lemma}\label{lem4.10}
Let $\dot{G} \in \overline{\mathcal{G}}_S^{\lambda^\ast}$ be  a $\{\dot{C}_k,\dot{\varTheta}_{2,2,0}^t\}$-free $(k\ne 4)$   bipartite signed graph of order $n\ge15$.
If
$\dot{\Theta}_{2,2,0}\subset \dot{G}$,  then
$\dot{G}$ is the induced subgraph of $\dot{\mathcal{A}}_i^{n_1,n_2}$ $(i=8,9,10,11,12,13)$, $\dot{\mathcal{A}}_{i}^{n_1}$ $(i=16,17,18,19)$ or
$[\dot{G},v,s,u,\dot{H}]$, where  $(\dot{G},v)\in \{(\dot{G}_4^{12},v_{12}),$ $(\dot{G}_2^{9},v_{9})\}$ and  $(\dot{H},u)\in \{(T_{a,1,a-1},v_{a-1}),$ $(\dot{Q}^\prime_{n_1,n_1},v_{n_1}),(\dot{G}_5^{10},v_{10}),(P_4,v_2)\}$, $s\ge 3$, $a\ge 3$ and $n_1\ge 1.$
\end{lemma}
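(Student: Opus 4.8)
The plan is to continue the structural analysis from Lemma \ref{lem4.9}, now under the extra hypothesis that $\dot{G}$ is $\dot{\varTheta}_{2,2,0}^t$-free. By Lemma \ref{l4.6} and the largest-order choice of $m_i$, one of $\dot{G}_i^{m_i}$ ($i=1,2,3,4$) is an induced subgraph of $\dot{G}$, and I would set up the partition $V(\dot{G})\setminus V(\dot{G}_i^{m_i})=U_0\cup U_1\cup U_2$ exactly as in \eqref{eq4.2}. The $\dot{\varTheta}_{2,2,0}^t$-free condition is what rules out long ``internal'' attachments to $V_2$, so the remaining vertices can only attach near the two ends of the spine. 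The first step is therefore to control $U_0$, $U_1$, $U_2$ and the edges among them using the three forbidden graphs $\dot{F}_1,\dot{F}_2,\dot{F}_3$ together with Lemma \ref{r4.8} (which forces $d_{V_1}=0$ or $d_{V_2}=0$ for each outside vertex and forbids an edge between a $V_1$-neighbour and a $V_2$-neighbour).

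First I would dispose of $U_0$. By Lemma \ref{l4.9}(1), if $U_0\neq\emptyset$ then $a_0=1$ and the configuration $\dot{G}[V(\dot{G}_i^{m_i})\cup\{w_2\}\cup U_0]\sim\dot{\mathcal{A}}_{2i+5}^{n_1,n_2}$ appears; together with Lemma \ref{l3.5}(7),(9),(11),(13) this pins the induced subgraph into one of the $\dot{\mathcal{A}}_7,\dot{\mathcal{A}}_9,\dot{\mathcal{A}}_{11},\dot{\mathcal{A}}_{13}$ families, matching the $\dot{\mathcal{A}}_i^{n_1,n_2}$ alternatives in the conclusion, and I would argue no further outside vertices can be added without creating $\dot{F}_1,\dot{F}_2,\dot{F}_3$. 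This reduces the main work to the case $U_0=\emptyset$, where every outside vertex lies in $U_1$ or $U_2$ and, by Lemma \ref{l4.9}(2), $\dot{G}[V_1\cup U_1]\subset\dot{G}_4^{10}$ (when $i=1,2$) or $\dot{G}[V_1\cup U_1]\subset\dot{G}_4^{12}$ (when $i=3,4$), with the symmetric statement for the $V_2$-side.

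Next I would handle the internal structure of $U_2$ (the $V_2$-attachments). Here the $\dot{\varTheta}_{2,2,0}^t$-free hypothesis does the heavy lifting: it forbids the ``doubled'' end-attachment, so each $V_2$-neighbour attaches in a controlled way, and forbidding $\dot{C}_k$ ($k\neq4$) plus $\dot{C}_4^1$ forces the $U_2$-vertices to form a path-like or $T_{a,1,a-1}$/$\dot{Q}'$-like end, precisely the pieces $(T_{a,1,a-1},v_{a-1})$, $(\dot{Q}'_{n_1,n_1},v_{n_1})$, $(\dot{G}_5^{10},v_{10})$, $(P_4,v_2)$ listed in the conclusion. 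Combining the $V_1$-side description (a $\dot{G}_4^{12}$- or $\dot{G}_2^9$-type head, via Lemma \ref{l4.9}(2)) with the $V_2$-side tail, and reading off the connecting $\dot{T}_{2k}'''$-path of length $s\geq3$, yields the bracketed join $[\dot{G},v,s,u,\dot{H}]$; the degenerate small cases collapse to $\dot{\mathcal{A}}_i^{n_1}$ ($i=16,\dots,19$) by Lemma \ref{l3.5}(13).

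The main obstacle I anticipate is the bookkeeping that shows the two ``ends'' are \emph{independent} — i.e.\ that attachments near $V_1$ and near $V_2$ cannot interact, so the graph genuinely splits as a head, a spine, and a tail. This is where Lemma \ref{r4.8}'s second assertion ($x_1\not\sim x_2$ when $x_1\sim V_1$ and $x_2\sim V_2$) is essential, and I expect to invoke it repeatedly alongside the forbidden $\dot{F}_1,\dot{F}_2,\dot{F}_3$ and the $\dot{C}_k$-freeness to kill every configuration in which a $V_1$-attachment and a $V_2$-attachment are joined by a short path. Once that separation is established, the classification of each end against the finite list in Lemma \ref{l3.5} is routine, and assembling the cases gives exactly the stated families.
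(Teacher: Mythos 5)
Your skeleton agrees with the paper's: fix the largest $\dot{G}_i^{m_i}$, partition the outside vertices as in Eq.~(\ref{eq4.2}), and use $\dot{\varTheta}_{2,2,0}^t$-freeness to force every $U_2$-vertex to have at most two neighbours in $V_2$. But two of your structural claims fail, and they conceal where the content of the proof actually lies. First, your disposal of $U_0\neq\emptyset$ is wrong: after the configuration $\dot{\mathcal{A}}_{2i+5}^{n_1,n_2}$ of Lemma \ref{l4.9}(1) appears, further outside vertices certainly \emph{can} be added --- the conclusion of the lemma contains $[\dot{G}_4^{12},v_{12},s,v_{a_4},\dot{Q}^\prime_{a_4,a_4}]$ with $s\ge 3$ arbitrary, and the paper's Case 2 shows that the $U_0\neq\emptyset$ situation extends to exactly such graphs (the $U_0$-vertex folds into a $\dot{Q}^\prime$-shaped tail while arbitrarily many $U_{22}$-vertices form the $\dot{T}_{2s}^{\prime\prime\prime}$ connector). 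What $U_0\neq\emptyset$ actually forces is $U_{21}=\emptyset$ (via $\dot{F}_1,\dot{F}_3$), not termination. Second, there is no ``symmetric statement for the $V_2$-side'' of Lemma \ref{l4.9}(2): the decomposition is asymmetric --- $V_1$ is the fixed head and, since $m_i$ is chosen maximal, Lemma \ref{l4.9}(2) gives $U_1=\emptyset$ outright, so the ``independence of the two ends'' that you flag as the main obstacle is essentially free (Lemma \ref{r4.8} plus $U_1=\emptyset$). All the difficulty sits at the far end of the spine $V_2$, precisely the part you declare routine.

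That part is the classification of $U_2=U_{21}\cup U_{22}$ (vertices with one, resp.\ two, neighbours in $V_2$). The paper first establishes the two facts you omit: $\dot{G}[U_{21}]$ is $K_1$ or $K_2^-$ with the $K_2^-$ forced to sit within distance $2$ of the end of the spine, and the $U_{22}$-vertices form the second rail of a ladder (consecutive ones joined by negative edges, non-consecutive ones non-adjacent). It then runs a case analysis on the position of the $U_{21}$-vertex relative to the last $U_{22}$-vertex ($i_q=j_1-1$, $i_q=j_1-3$, $i_q\le j_1-4$, and the $K_2^-$ case), invoking Lemma \ref{l3.5}(1),(2),(4) to pin the tail down to exactly $P_4$, $T_{a,1,a-1}$, $\dot{Q}^\prime_{n_1,n_1}$ or $\dot{G}_5^{10}$ with the stated parameters (e.g.\ $b=a-1$, $n_2=n_1$). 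Without these steps the precise list of tails in the conclusion cannot be recovered, so the proposal as written has a genuine gap.
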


\begin{proof}Recall that $\dot{G}_i^{m_i}$ ($i=1,2,3$ or $4$) is an induced subgraph of $\dot{G}.$ Here, we only consider  that $\dot{G}_4^{m_4}\subset \dot{G}.$ The proofs of the cases   $\dot{G}_i^{m_i}\subset \dot{G}$ for $i=1,2,3$ are similar.
We use the vertex partitions of Eqs. (\ref{eq4.1}) and (\ref{eq4.2}). By Lemma \ref{l4.9} (2), 
then $U_1=\emptyset.$
Since $\dot{G}$ is $\dot{\varTheta}_{2,2,0}^t$-free, then each vertex in $U_{2}$ is adjacent to  at most two vertices of $V_2.$ 
Then
 let
 $U_2=U_{21}\cup U_{22},$ where each vertex in $U_{2i}$ ($i=1$ or 2) is adjacent to $i$ vertices of $V_2.$ 
 Furthermore,
 set $U_{21}=\{u_{j_1},\dots,u_{j_p}\}$ ($13\le {j_1}\le \dots \le {j_p}\le m_4-1$) 
  where $u_{j}\mathop{\sim}\limits^{+} v_{j}$ for  $j=j_1,\dots,j_p$,
 and 
 $U_{22}=\{u^\prime_{i_1},\dots,u^\prime_{i_q}\}$  ($13\le {i_1}\le \dots \le {i_q}\le m_4-2)$ where
 $u^\prime_{i}\mathop{\sim}\limits^{+} v_{i}$ and
$u^\prime_{i}\mathop{\sim}\limits^{-} v_{i+2}$ for  $i=i_1,\dots,i_q.$
  Since $\dot{G}$ is $\{\dot{\varTheta}_{2,2,0}^t,\dot{F}_3,\dot{C}_k,\dot{C}_4^1,\dot{G}_4^{m_4+1}\}$-free ($k\ne 4$),
 we have 

$(\textbf{A1})$
 $\dot{G}[U_{21}]$ is  $K_1$ or $K_2^-$. If  $\dot{G}[U_{21}]$ is  $K_2^-$, then $1\le m_4-j_2\le2$   (by forbidding  $\dot{F}_8$),

$(\textbf{A2})$  $i_{j_1}\ne i_{j_2}$ if  $j_1\ne j_2$, $u^\prime_{i_{j_1}}\mathop{\sim}\limits^{-}u^\prime_{i_{j_2}}$ if $|i_{j_1}-i_{j_2}|=1$ and
 $u^\prime_{i_{j_1}}\not\sim u^\prime_{i_{j_2}}$ if $|i_{j_1}-i_{j_2}|\ge 2$.

\noindent Then  $\dot{G}[V(\dot{G}_4^{m_4})\cup U_{21}]\sim \dot{\mathcal{A}}_{12}^{n_1,n_2}$, $\dot{\mathcal{A}}_{13}^{0,n_2}$ or $\dot{\mathcal{A}}_{19}^{n_1}$ and $\dot{G}[V(\dot{G}_4^{m_4})\cup U_{22}]\subset [\dot{G}_4^{12},v_{12},s]$.

\textbf{Case 1.} $U_{0}$ is empty.
 If $U_{21}$ or $U_{22}$ is empty, then we are done.
If  $U_{21}$ and $U_{22}$ are not empty, according to (\textbf{A1}),
 we break  into two subcases:

\textbf{Subcase 1.1.} $\dot{G}[U_{21}]=K_1$. Then $U_{21}=\{u_{j_1}\}$, $i_q\le j_1-1$ and $i_q\ne j_1-2$ (otherwise $\dot{F_1}\subset \dot{G}$ or $\dot{F_3}\subset \dot{G}$, which contradicts to Lemma \ref{lem-2.7}).

\textbf{Subsubcase 1.1.1.} $i_q= j_1-3.$ By $(\textbf{A2})$, then $\dot{G}\subset [\dot{G}_4^{12},v_{12},s_1,v_{2},P_{\ell}].$
By forbidding $Q_{1,1,3},$ then $\ell \le 4.$ So, $\dot{G}\subset [\dot{G}_4^{12},v_{12},s,v_{2},P_{4}]$.

\textbf{Subsubcase 1.1.2.} $i_q\le  j_1-4.$ By
 $(\textbf{A2})$, then $\dot{G}\subset [\dot{G}_4^{12},v_{12},s_1,v_{c},T_{c,1,a}].$
If $a=1$ or 2,  then  $\dot{G}\subset [\dot{G}_4^{12},v_{12},s,v_{2},P_{4}]$.
If $a\ge 3$,  by  forbidding $Q_{1,c+1,a}$ (where $c\le a-2$), then $c\ge a-1$.
So, $\dot{G}\subset[\dot{G}_4^{12},v_{12},s,v_{a-1},T_{a-1,1,a}]$.
 
 \textbf{Subsubcase 1.1.3.} $i_q=j_1-1.$
Then $u^\prime_{i_q}\mathop{\sim}\limits^{+} v_{j_1-1}$ and
$u^\prime_{i_q}\mathop{\sim}\limits^{-} v_{j_1+1}.$
If $u^\prime_{i_q}\sim u_{j_1},$  then $u^\prime_{i_q}\mathop{\sim}\limits^{-} u_{j_1}$ and $j_1=m_4-1$ (by forbidding $\dot{C}_4^1$). By $(\textbf{A2})$, then
$\dot{G}\subset [\dot{G}_4^{12},v_{12},s].$
If $u^\prime_{i_q}\not\sim u_{j_1},$  by forbidding $\dot{C}_4^1$ and $\dot{F}_1$, then $\dot{G}\subset  [\dot{G}_4^{12},v_{12},s_1,v_{n_2},\dot{Q}^\prime_{n_1,n_2}]$. By  Lemma \ref{l3.5} (2), then  $n_2\ge 1$ if $n_1=0$ and $n_2\ge n_1$ if $n_1\ge 1$. So,  $\dot{G}\subset[\dot{G}_4^{12},v_{12},s,v_{n_1},\dot{Q}^\prime_{n_1,n_1}]$ $(n_1\ge 1)$.

\textbf{Subcase 1.2.} $\dot{G}[U_{21}]=K_2^-$.
Then $U_{21}=\{u_{j_1},u_{j_2}\},$ $j_2=j_1+1$ and $u_{j_1}\mathop{\sim}\limits^{-}u_{j_2}.$
Since  $\dot{G}$ is $\{\dot{\varTheta}_{2,2,0}^t,\dot{F}_1\}$-free, then  $i_q\le j_1-3.$
If $j_2=m_4-1,$ then $\dot{G}\subset  [\dot{G}_4^{12},v_{12},s,v_{n_2},\dot{Q}^\prime_{0,n_2}]$, which has been done in subsubcase 1.1.3.
If $j_2=m_4-2,$ then $\dot{G}\subset   [\dot{G}_4^{12},v_{12},s,v_{n_1},\dot{G}_5^{n_1}].$
By Lemma  \ref{l3.5} (4), 
then $n_1\ge 10$. So,
 $\dot{G}\subset  [\dot{G}_4^{12},v_{12},s,v_{10},\dot{G}_5^{10}]$ $(s\ge 3)$.

\textbf{Case 2.} $U_{0}$ is nonempty.
Then $|U_{21}|=0$ as $\dot{G}$ is $\{\dot{F}_1,\dot{F}_3\}$-free. Similarly, by (\textbf{A2}) and forbidding $\dot{\varTheta}_{2,2,0}^t,\dot{F}_1,\dot{C}_4^1$,  then $\dot{G}\subset  [\dot{G}_4^{12},v_{12},s,v_{n_2},\dot{Q}^\prime_{n_1,n_2}],$ which has been  done in subsubcase 1.1.3.

This completes the proof.
\end{proof}

   \subsubsection{$\dot{G}$ is $\dot{\Theta}_{2,2,0}$-free}

Lastly it remains  that $\dot{G}$ is  $\dot{\Theta}_{2,2,0}$-free.
First  we suppose that $\dot{H}\subset \dot{T}_{2k}.$
Notice that if $\dot{H}\subset \dot{T}_{2k}^\prime,$ then $\dot{H}\subset \dot{T}_{2n}^\prime$ for all $n\ge k.$ Without loss of generality, we assume that
 $\dot{H}\subset \dot{T}_{2t}^\prime$, where $t$ is maximal. Up to switching isomorphic, let $V(\dot{H})=V_1\cup U_1$ where $V_1=\{v_1,v_2,\dots,v_t\}$ and $U_1=\{u_{k_1},\dots,u_{k_\ell}\}$ ($1\le k_1< \dots< k_\ell\le t$).  
Let $w$ be a good vertex in $V(\dot{G})\setminus V(\dot{H}).$
 If $t=3$ or 4, applying  algorithm 1 to  $A(\dot{H})$, then there is no  signed graph $\dot{G}$ with order $|V(\dot{H})|+1$ and $2<\rho(\dot{G})\le \lambda^\ast.$ Next we let $t\ge 5.$ 
 
 Before giving the main result of this subsubsection, we  need some preliminary lemmas.

 \begin{lemma}\label{lemma4.12}
 $d_{V_1}(w)\le 1$ and  $d_{U_1}(w)\le 1$.
\end{lemma}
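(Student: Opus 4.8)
The plan is to prove the two degree bounds by exhibiting, in each violating case, a forbidden induced subgraph whose spectral radius exceeds $\lambda^\ast$, thereby contradicting the hypothesis $\dot{G}\in\overline{\mathcal{G}}_S^{\lambda^\ast}$. Recall the setup: $\dot{H}\subset\dot{T}_{2t}^\prime$ with $t\ge 5$ chosen maximal, $V(\dot{H})=V_1\cup U_1$ where $V_1=\{v_1,\dots,v_t\}$ is the spine path and $U_1=\{u_{k_1},\dots,u_{k_\ell}\}$ are the pendant vertices hanging off the spine, and $w$ is a good vertex outside $\dot{H}$, meaning $2<\rho(\dot{H}_U(w))\le\lambda^\ast$. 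Since $\dot{H}_U(w)$ has spectral radius at most $\lambda^\ast$, every induced subgraph of it is non-forbidden by Lemma \ref{Lem2.5} (interlacing), so it suffices to show that $d_{V_1}(w)\ge 2$ or $d_{U_1}(w)\ge 2$ forces a known forbidden configuration.

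First I would bound $d_{V_1}(w)$. Suppose $w$ is adjacent to two spine vertices $v_i$ and $v_j$ with $i<j$. The path $v_i v_{i+1}\dots v_j$ together with $w$ closes a cycle $\dot{C}_{j-i+2}$ in $\dot{G}$. If $j-i+2\ne 4$, i.e. $j-i\ne 2$, this produces a cycle $\dot{C}_k$ with $k=3$ or $k\ge 5$, contradicting the standing assumption that $\dot{G}$ is $\dot{C}_k$-free for $k\ne 4$. The remaining possibility is $j=i+2$, giving a $\dot{C}_4$; here I would invoke the $\dot{\Theta}_{2,2,0}$-free hypothesis of this subsubsection. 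Indeed, $w$, $v_i$, $v_{i+1}$, $v_{i+2}$ form a $4$-cycle, and since $t\ge 5$ the spine extends at least one further vertex beyond $\{v_i,v_{i+1},v_{i+2}\}$ on one side; attaching that extra spine edge to the $\dot{C}_4$ together with the chordless structure yields an induced $\dot{\Theta}_{2,2,0}$ (the theta graph being the $\dot{C}_4$ with the path $v_i v_{i+1} v_{i+2}$ as one of its two internal $2$-paths, the other being $v_i w v_{i+2}$). Thus $d_{V_1}(w)\le 1$.

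Next I would bound $d_{U_1}(w)$. Suppose $w$ is adjacent to two pendant vertices $u_{k_a}$ and $u_{k_b}$ with $k_a<k_b$. Each $u_{k}$ is attached to the spine at $v_k$, so the walk $u_{k_a} v_{k_a} v_{k_a+1}\dots v_{k_b} u_{k_b}$ together with $w$ closes a cycle of length $(k_b-k_a)+4\ge 5$ (since $k_b>k_a$ forces length at least $5$). This is a $\dot{C}_k$ with $k\ge 5$, again contradicting $\dot{C}_k$-freeness. Hence $d_{U_1}(w)\le 1$.

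The main obstacle, and the point requiring the most care, is the $\dot{C}_4$ subcase of the first bound: there the cycle itself is allowed, so the contradiction must come not from a forbidden cycle length but from constructing a genuinely \emph{induced} $\dot{\Theta}_{2,2,0}$, and one must verify that no chords or extra adjacencies among $\{w,v_i,v_{i+1},v_{i+2}\}$ and a neighbouring spine vertex spoil the inducedness—this uses that $\dot{H}\subset\dot{T}_{2t}^\prime$ is a tree-like (caterpillar) structure together with $t\ge 5$ to supply the needed extra spine vertex. I would also note in passing that the sign of the $\dot{C}_4$ is irrelevant to the argument, since $\dot{\Theta}_{2,2,0}$ is excluded by hypothesis regardless of signing (the relevant signed copy being obtained up to switching). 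All other cases reduce cleanly to the forbidden-cycle-length contradiction via Lemma \ref{Lem2.5}.
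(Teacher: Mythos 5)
There is a genuine gap in the critical subcase $j=i+2$ of the first bound, and it is precisely the case your own summary flags as requiring the most care. Your claimed contradiction is that the $4$-cycle $v_i\,v_{i+1}\,v_{i+2}\,w$ together with one further spine vertex yields an induced $\dot{\Theta}_{2,2,0}$. It cannot: a $4$-cycle with a pendant path attached is unicyclic, while $\dot{\Theta}_{2,2,0}$ is bicyclic (two internally disjoint paths of length $3$ plus an edge between the same pair of endpoints, $6$ vertices and $7$ edges), so no such induced copy exists in that configuration. Moreover, contrary to your remark that ``the sign of the $\dot{C}_4$ is irrelevant,'' the sign is exactly what decides this case. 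If the $4$-cycle is balanced, then together with the spine extension one gets the forbidden $\dot{C}_4^1$ and is done. If it is unbalanced, no forbidden subgraph arises at all (the unbalanced $\dot{\mathcal{C}}_4^1$ has spectral radius at most $2$, being an induced subgraph of $\dot{T}_{2k}$), and the paper's contradiction comes from a different source entirely: after switching so that $w\mathop{\sim}\limits^{+}v_{i-1}$ and $w\mathop{\sim}\limits^{-}v_{i+1}$, one checks (using $\dot{C}_4^1$-, $\dot{\Theta}_{2,2,0}$- and $\dot{C}_k$-freeness) that $u_i\notin U_1$ and that $w$'s adjacencies to $U_1$ force $\dot{H}_U(w)\simeq\dot{G}[V(\dot{H})\cup\{u_i\}]\subset\dot{T}_{2t}^{\prime}$, whence $\rho(\dot{H}_U(w))\le 2$. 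This contradicts the hypothesis that $w$ is a \emph{good} vertex, i.e.\ $2<\rho(\dot{H}_U(w))\le\lambda^\ast$ — a hypothesis your argument never invokes, and without which the statement in this subcase is not provable by forbidden subgraphs alone.

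Two smaller points. First, the initial bound $d_{V_1}(w)\le 2$ in the paper already uses $\{\dot{\Theta}_{2,2,0},\dot{C}_4^1\}$-freeness (three spine neighbours would create one of these or a long cycle); your cycle-length argument covers this adequately. Second, in the $d_{U_1}(w)\le 1$ part you assert that the closed walk through $u_{k_a}$, the spine, and $u_{k_b}$ is a cycle of length at least $5$, but you do not verify it is induced; possible chords from $w$ to intermediate spine vertices must be ruled out (they lead to triangles, shorter forbidden cycles, or an induced $\dot{\Theta}_{2,2,0}$, so the claim survives, but the check is needed). The paper disposes of this half with ``similarly,'' and your sketch is closer to working there, but the $V_1$ half as written does not close.
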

\begin{proof}
  Since $\dot{G}$ is $\{\dot{\Theta}_{2,2,0},\dot{C}_4^1\}$-free, then $d_{V_1}(w)\le 2.$
If $d_{V_1}(w)=2$, then $w\mathop{\sim}\limits^{+} v_{i-1}$ and  $w\mathop{\sim}\limits^{-} v_{i+1}$ (switching at $w$ if necessary). 
Since  $\dot{G}$ is $\{\dot{C}_4^1,\dot{\Theta}_{2,2,0},\dot{C_k}\}$-free $(k\ne 4)$, then $u_{i}\not\in U_1$, $w\not\sim u_{j}$ for all $j\ne i\pm 1$ and $w\mathop{\sim}\limits^{-} u_{i\pm 1}$ if $u_{i\pm 1}\in U_1$.
So  $\dot{H}_U(w)\simeq \dot{G}[V(\dot{H})\cup \{u_{i}\}]$ and $\dot{H}_U(w)\subset \dot{T}_{2t}^\prime$, which is a  contradiction.
Hence, $d_{V_1}(w)\le 1.$ Similarly, we have $d_{U_1}(w)\le1.$
\end{proof}

Then we  let $$V(\dot{G})\setminus V(\dot{H})=V_0\cup V_1^\prime\cup U_1^\prime\cup Y,$$ where
 each vertex in  $V_0$ is adjacent to no vertex of $V_1\cup U_1$,
 each vertex in  $V_1^\prime$ is adjacent to exactly one vertex of $V_1$ and no vertex of $U_1$,
 each vertex in  $U_1^\prime$ is adjacent to exactly one vertex of $U_1$ and no vertex of $V_1$,
  each vertex in  $Y$ is adjacent to exactly one vertex of $V_1$ and exactly one vertex of $U_1$.

 \begin{lemma}\label{l4.12}
 If $w\in Y,$ then

  $(i)$  $w\mathop{\sim}\limits^{+}  v_{t-2}$,   $w\mathop{\sim}\limits^{-}   u_{t}$ and $u_{i}\not\in U_1$ for 
  $i=t-1,t-2,t-3,t-4,$ or

 $(ii)$ $w\mathop{\sim}\limits^{+}   v_3$, $w\mathop{\sim}\limits^{+}   u_1$ and $u_{i}\not\in U_1$
for $i=2,3,4,5$.

\end{lemma}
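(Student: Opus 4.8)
The plan is to push the local analysis of the good vertex $w$ to its conclusion using only the forbidden configurations already established. By Lemma \ref{lemma4.12} we have $d_{V_1}(w)\le 1$ and $d_{U_1}(w)\le 1$, so the hypothesis $w\in Y$ forces $w$ to be adjacent to exactly one vertex $v_a\in V_1$ and exactly one vertex $u_b\in U_1$. Recall the shape of $\dot{T}_{2t}^{\prime}$: the $v_i$ form the path $v_1v_2\cdots v_t$; an interior $u_b$ satisfies $u_b\sim v_{b-1}$ and $u_b\sim v_{b+1}$, so that $v_{b-1}v_bv_{b+1}u_b$ is an unbalanced $\dot{C}_4$; while the two extremal vertices are pendants, $u_1\sim v_2$ and $u_t\sim v_{t-1}$. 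I would organise the argument in three stages: first locate $a$ relative to $b$, then force $b$ to an endpoint of the path, and finally exclude the four nearest $u$'s.

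First, since $\dot{G}$ is connected, $w$ together with the unique $v_a$--$u_b$ path inside $\dot{H}$ closes a cycle; as $\dot{G}$ is $\dot{C}_3$-free (Lemma \ref{c3}) and $\dot{C}_k$-free for every $k\neq 4$, this cycle must have length exactly $4$. Inspecting the neighbours of $u_b$ pins $a\in\{b-2,\,b,\,b+2\}$. For an interior index $b$ each of these leads to a contradiction: the values $a=b\pm 2$ produce, together with the $\dot{C}_4$ on $\{v_{b-1},v_b,v_{b+1},u_b\}$ already present, two $4$-cycles sharing an edge $v_{b-1}v_b$ or $v_bv_{b+1}$, that is a $\theta_{2,2,0}$, which by Lemma \ref{base} is either switching equivalent to $\dot{\Theta}_{2,2,0}$ (excluded in this subsection) or has spectral radius exceeding $\lambda^{\ast}$; and the value $a=b$ attaches $w$ to the two opposite vertices of that same $\dot{C}_4$, creating a $K_{2,3}$ which, carrying the pendant path-edges $v_{b-1}v_{b-2}$ and $v_{b+1}v_{b+2}$, is one of the forbidden graphs $\dot{F}_i$ of Fig. \ref{f1}. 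Hence $b\in\{1,t\}$, and by the mirror symmetry $v_i\mapsto v_{t+1-i}$, $u_i\mapsto u_{t+1-i}$ (together with switching) it suffices to treat $b=t$, which yields conclusion $(i)$, the case $b=1$ giving $(ii)$. Now $u_t\sim v_{t-1}$ is a pendant, so a $4$-cycle can close only when $a=t-2$ (the alternative $a=t$ gives the banner $\dot{C}_4^{1}$ as an induced subgraph, which is forbidden). Moreover the $4$-cycle $wv_{t-2}v_{t-1}u_t$ must be unbalanced, for otherwise, with the pendant edge $v_{t-2}v_{t-3}$, it would be a balanced $\dot{C}_4^{1}$; switching at $w$ then normalises the signs to $w\mathop{\sim}\limits^{+}v_{t-2}$ and $w\mathop{\sim}\limits^{-}u_t$.

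It remains to show $u_i\notin U_1$ for $i=t-1,t-2,t-3,t-4$, and this is where the real work lies. For $i=t-1$ and $i=t-2$ the would-be $4$-cycle through $u_i$ contains the edge $v_{t-2}v_{t-1}$, so it forms a $\theta_{2,2,0}$ with the cycle $wv_{t-2}v_{t-1}u_t$ and is excluded exactly as above. For $i=t-3$ and $i=t-4$ the two unbalanced $4$-cycles meet only at the vertex $v_{t-2}$ (a $\dot{\mathcal{B}}_0^{4,4}$) or are joined by the short path through $v_{t-3}v_{t-2}$ (a $\dot{\mathcal{B}}^{4,4}_r$); although these base graphs are themselves admissible, the pendant edges forced by the surrounding path turn them into forbidden graphs $\dot{F}_i$ of Fig. \ref{f1}, whose spectral radius exceeds $\lambda^{\ast}$. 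I expect this last stage to be the main obstacle: one must match each of the four positions with the correct $\dot{F}_i$ and, crucially, verify that the threshold is exactly four, that is, that placing a $u$ at distance five or more lets the spectral radius settle back to at most $\lambda^{\ast}$, so that no further positions need be excluded. The boundary bookkeeping (ensuring $v_{t-3},v_{t-4},v_{t-5}$ actually exist, which is where the standing assumption $t\ge 5$ enters) will also have to be carried out with care.
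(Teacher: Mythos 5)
Your skeleton matches the paper's: use $\dot{C}_k$-freeness $(k\ne 4)$ to pin $a\in\{b,b\pm2\}$, push $u_b$ to an end of the strip, normalize signs, and then exclude the nearby $u_i$'s. But there are genuine gaps, starting with your model of $\dot{T}_{2t}^{\prime}$: besides $u_i\sim v_{i\pm1}$, consecutive $u$'s are adjacent ($u_i\sim u_{i+1}$ -- forced by the $4$-regularity of $\dot{T}_{2k}$ and used explicitly in the paper, e.g.\ when $\dot{G}[\{v_{t-1},v_t,u_{t-1},u_t,w\}]$ is identified with $\theta_{1,1,1}$). This breaks several of your ``induced'' identifications; for instance, when $u_{t-1}\in U_1$ the six vertices you exhibit carry the extra edge $u_{t-1}u_t$ and do not induce a $\theta_{2,2,0}$. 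More seriously, the case $a=b$ cannot be dispatched by forbidden subgraphs alone. For $b=a=t$ the single $4$-cycle $wv_tv_{t-1}u_t$ yields $\dot{C}_4^1$ only if it is balanced; if it is unbalanced, $w$ plays the role of a vertex of a larger $\dot{T}^{\prime}$, so $\rho(\dot{H}_U(w))\le 2$, contradicting that $w$ is good -- a maximality argument you never invoke. Likewise your $K_{2,3}$-plus-pendants claim for interior $b$ fails at $b=t-1$ (and $b=2$): there the balanced $4$-cycle through $w$ may be $wv_{t-1}v_tu_{t-1}$, every remaining vertex of $\dot{H}$ meets it in two vertices so no pendant arises, and the resulting signed $K_{2,3}$ is the admissible $\dot{\Theta}_{1,1,1}$ rather than a forbidden $\dot{F}_i$. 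The paper isolates exactly this surviving subcase ($i=t-1$) and kills it with the maximality of $\dot{H}$ together with the $\theta_{1,1,1}$ observation.

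The exclusion of $u_{t-4}$ is also not a forbidden-subgraph matter: the paper shows the relevant $8$-vertex configuration is $\dot{\mathcal{A}}_2^{0,1,0}$ and needs the computation $\phi_{\dot{\mathcal{A}}_2^{0,1,0}}(\lambda^\ast)<0$ from Lemma \ref{l3.5} $(2)$; this graph is unicyclic, so your proposed $\dot{\mathcal{B}}^{4,4}_r$-plus-pendants route -- which presupposes a second $4$-cycle through $u_{t-4}$ and in any case fails for $t=5$, where $u_1$ is a pendant of $\dot{H}$ -- cannot produce it, and you concede you have not matched the configurations to concrete $\dot{F}_i$'s. (Your worry about verifying that ``the threshold is exactly four'' is not part of the lemma, which only asserts that these four positions are excluded.) So the approach is the right one in outline, but the surviving subcases require the maximality/goodness of $w$ and one explicit spectral computation, neither of which your argument supplies.
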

\begin{proof}
Let $w\sim v_i$ and $w\sim u_j.$
 Since $\dot{G}$ is $\dot{C}_k$-free $(k\ne 4),$ then $j=i$ or $j= i\pm 2.$ If $i=j,$
then $w\mathop{\sim}\limits^{+}  v_{i}$ and  $w\mathop{\sim}\limits^{-}   u_{i}$ (switching at $w$ if necessary).
By forbidding $\dot{C}_4^1$, then $i=t-1$. If $u_{t}\not\in U_1$  (resp.  $u_{t}\in U_1$), then $\dot{H}_U(w)\subset \dot{T}_{2t}^\prime$ (resp.  $\dot{G}[\{v_{t-1},v_t,u_{t-1},u_t,w\}]\sim \theta_{1,1,1}$),  which is a  contradiction.
  So $j=i\pm 2.$
 If $j=i+2$, since $\dot{G}$ is  $\{\dot{C}_4^1,\dot{\Theta}_{2,2,0},\dot{F}_1\}$-free, then  $t=i+2$, $w\mathop{\sim}\limits^{+}  v_{t-2}$,   $w\mathop{\sim}\limits^{-}   u_{t}$  and $u_{i}\not\in U_1$  for $i=t-1,t-2,t-3.$
 If $u_{t-4}\in U_1,$ then $\dot{G}[v_t,v_{t-1},v_{t-2},v_{t-3},v_{t-4},u_t,u_{t-4},w]\sim \dot{\mathcal{A}}_2^{0,1,0}.$ 
 By  Lemma \ref{l3.5} (2), then $\rho(\dot{G})\ge \rho(\dot{\mathcal{A}}_2^{0,1,0})>\lambda^\ast,$ which is a  contradiction. So 
   $u_{t-4}\not\in U_1.$
 If $j=i-2,$ because of symmetry, then $i=3$,  $w\mathop{\sim}\limits^{+}   v_3$, $w\mathop{\sim}\limits^{+}   u_1$ and $u_{i}\not\in U_1$
for $i=2,3,4,5$.
  \end{proof}
\begin{lemma}\label{lemma4.13}
$V_0$ is empty.
\end{lemma}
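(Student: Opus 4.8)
The plan is to assume $V_0\neq\emptyset$ and derive a forbidden induced subgraph. Since $\dot{G}$ is connected, I would fix $w_0\in V_0$ and take a shortest path $w_0w_1\cdots w_{p-1}x$ from $w_0$ to $V(\dot{H})$, with $x\in V(\dot{H})$ and $w_0,\dots,w_{p-1}\notin V(\dot{H})$. As $w_0$ is non-adjacent to $\dot{H}$ we have $p\ge 2$, and minimality of the path forces $w_0,\dots,w_{p-2}\in V_0$ while $w_{p-1}$ is adjacent to $\dot{H}$, hence $w_{p-1}\in V_1^\prime\cup U_1^\prime\cup Y$. A key preliminary observation is that every vertex of $V(\dot{G})\setminus V(\dot{H})$ adjacent to $\dot{H}$ is automatically a good vertex: if $\rho(\dot{G}[V(\dot{H})\cup\{w_{p-1}\}])\le 2$, then $\dot{G}$ would have an induced subgraph of order $|V(\dot{H})|+1$ with spectral radius at most $2$, contradicting the maximality of $|V(\dot{H})|$ fixed at the start of the subsubsection. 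Thus $w_{p-1}$ is a good vertex carrying a further pendant neighbour $w_{p-2}\in V_0$, and the whole lemma reduces to showing that no good vertex adjacent to $\dot{H}$ can itself support such a pendant.

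To pin down how $w_{p-1}$ meets $\dot{H}$ I would invoke Lemma \ref{lemma4.12}, giving $d_{V_1}(w_{p-1})\le 1$ and $d_{U_1}(w_{p-1})\le 1$, together with Lemma \ref{l4.12}, which places any $Y$-type attachment within distance two of an end of the path $V_1$. This leaves only a handful of local pictures: $w_{p-1}\in V_1^\prime$ hanging at an interior or terminal vertex $v_i$, $w_{p-1}\in U_1^\prime$ hanging at some $u_j$, or $w_{p-1}\in Y$ in one of the two explicit configurations of Lemma \ref{l4.12}. In each case I would adjoin the extra pendant $w_{p-2}$ and read off a forbidden subgraph. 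Because $t\ge 5$, the two path-branches emanating from the attachment point are long, so the degree-three vertex created by the pendant $w_{p-2}\sim w_{p-1}\sim v_i$, taken together with the $u$-pendants already present in $\dot{T}_{2t}^\prime$, contains one of $\dot{F}_1,\dot{F}_2,\dot{F}_3$ of Fig. \ref{f1}, or a forbidden tree or unicyclic graph such as $T_{2,3,4}$ or $Q_{2,2,2}$; by Lemma \ref{lem-2.7} all of these have spectral radius exceeding $\lambda^\ast$, contradicting $\dot{G}\in\overline{\mathcal{G}}_S^{\lambda^\ast}$.

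The delicate case, which I expect to be the main obstacle, is when $x$ is a terminal vertex $v_1$ or $v_t$ of the path $V_1$. Here the pendant $w_{p-1}w_{p-2}$ looks like an innocuous prolongation of $V_1$ and produces no obvious forbidden configuration. The resolution is to observe that, up to switching (pendant-edge signs being free), such a prolongation embeds $\dot{G}[V(\dot{H})\cup\{w_{p-1},w_{p-2}\}]$ into $\dot{T}_{2(t+1)}^\prime$, whose spectral radius is at most $2$; this yields an induced subgraph of $\dot{G}$ of order $|V(\dot{H})|+2>|V(\dot{H})|$ with spectral radius at most $2$, again contradicting the maximality of $|V(\dot{H})|$. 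Should the sign or parity of the attachment obstruct this embedding, the resulting graph is checked directly to contain $\dot{F}_1$ or a forbidden $T_{a,b,c}$. In every case we reach a contradiction, so $V_0$ must be empty.
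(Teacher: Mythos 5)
Your overall strategy (a shortest path from $V_0$ into $\dot{H}$, then constraining the attachment point by maximality and by forbidden subgraphs) matches the paper's, and your treatment of a terminal attachment at $v_1$ or $v_t$ via an embedding into $\dot{T}_{2(t+1)}^\prime$ is essentially the paper's appeal to the maximality of $t$. But there is a genuine gap in your endgame: you assert that ``in every case we reach a contradiction'' by reading off a forbidden induced subgraph, and this fails for exactly the configurations that survive the $T_{2,3,4}$-analysis. Forbidding $T_{2,3,4}$ leaves $\ell=1$ with the attachment point $v_{i+1}$ satisfying $i=2$, $i=t-3$, or $i=3$ with $t=7$; adjoining the pendant path from $V_0$ then produces trees of type $T_{2,2,c}$ or $T_{2,3,3}$, which lie in $\mathcal{G}^{\lambda^\ast}$ by Theorem \ref{thm1}, and when $U_1$ is empty or a single vertex there are no $4$-cycles available to create any of $\dot{F}_1,\dot{F}_2,\dot{F}_3$. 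No forbidden induced subgraph exists in these local pictures, so your argument stalls precisely where the real work is.

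The paper closes these cases by a global, not local, contradiction: forbidding $\dot{F}_1$ and $\dot{F}_2$ forces $V_0=\{w\}$, $Y=\emptyset$, $|U_1|\le 1$ and $|V_1^\prime|+|U_1^\prime|\le 2$, so that $\dot{G}$ itself is a tree or a unicyclic graph; this contradicts the standing hypothesis $m\ge n+1$ of Section 4, trees and unicyclic graphs having already been settled by Theorem \ref{thm1} and Lemma \ref{u1}. Your proposal never invokes $m\ge n+1$, and without some such step the surviving configurations cannot be eliminated. The rest of your outline (the ``good vertex'' observation from the maximality of $|V(\dot{H})|$, and the use of Lemmas \ref{lemma4.12} and \ref{l4.12} to restrict attachments) is sound.
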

\begin{proof}
For a contradiction, assume that $w\in V_0$.  Then
 there is a shortest path $P_{wx_{i+1}}=ww_1\dots w_\ell x_{i+1}$ ($\ell \ge 1$) from
  $w$ to $x_{i+1}\in V(\dot{H})$ where $x=v$ or $u$.     Since $t$ is maximal, then $\ell+1 \le i$ and $\ell+1\le t-(i+1)$.  By forbidding  $T_{2,3,4}$, then $\ell=1,$  $i=2$ and $t\ge 5;$ or $\ell=1,$ $i=t-3$ and $t\ge 5;$ or $\ell=1,$ $i=3$ and $t=7.$
  By forbidding $\dot{F}_1$ and $\dot{F_2}$, if $x=v,$ 
  then $V_0=\{w\},$ $U_1=\emptyset$ or $U_1=\{u_{i+1}\},$ $V_1^\prime=\{w_1\},$ $Y=\emptyset$ and  $|U_1^\prime|\le 1;$
   if $x=u,$ 
  then  $V_0=\{w\},$ $U_1=\{u_{i+1}\},$ $U_1^\prime=\{w_1\},$ $Y=\emptyset$ and  $|V_1^\prime|\le 1$.
   Therefore, $\dot{G}$ is a tree or unicyclic, which 
   has been done in Theorem \ref{thm1} and
    Lemma \ref{u1}.
    Hence,
$V_0$ is empty.
\end{proof}

 \begin{lemma}\label{l4.14} If $w\in V_1^\prime$, then
 
 $(i)$ if   $w\sim v_2$, then $u_2\in U_1$ and  $u_i\not\in U_1$ for $i=1,3,4,5,$
 
 $(ii)$ if  $w\sim v_{t-1}$, then  $u_{t-1}\in U_1$ and  $u_i\not\in U_1$ for $i=t,t-2,t-3,t-4,$
 
 $(ii)$
if  $w\sim v_i$ $(3\le i\le t-2),$ then 
$u_{i\pm 1}\not\in U_1$
 and one of the following holds:
$(1)$  $k_1=i$ and  $k_2\ge  i+2;$
$(2)$  $k_1\ge  i+2;$
$(3)$ $k_\ell=i$ and $k_{\ell-1}\le i-2;$
$(4)$ $k_\ell\le i-2.$
  \end{lemma}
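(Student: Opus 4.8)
The plan is to fix a vertex $w\in V_1^\prime$, so that $w$ has a unique neighbour $v_i\in V_1$ and no neighbour in $U_1$ (consistent with $d_{V_1}(w)\le1$ from Lemma \ref{lemma4.12}), and to read off the admissible positions of the vertices of $U_1$ near $v_i$ from a short list of forbidden induced subgraphs. Throughout I would use the adjacency pattern of $\dot T_{2t}^\prime$ recorded in Figure \ref{figure4}: each $u_j\in U_1$ is joined to $v_{j-1}$ and $v_{j+1}$, so that $\{v_{j-1},v_j,v_{j+1},u_j\}$ spans an (unbalanced) $4$-cycle, a fact already reflected in the proof of Lemma \ref{lemma4.12}. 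Since $\dot G$ is bipartite and $\dot\Theta_{2,2,0}$-free, and $\dot\Theta_{2,2,0}$ is precisely two $4$-cycles sharing an edge, no two consecutive $u_j,u_{j+1}$ can lie in $U_1$. The two standing facts I would exploit are: the forbidden graphs of Lemma \ref{lem-2.7} ($\dot C_4^1$, the $\dot F_i$, and the large $T_{a,b,c},Q_{a,b,c}$) together with Lemma \ref{base}; and that $\dot H$ is an induced subgraph of maximum order with $\rho\le2$, so that every vertex outside $\dot H$, in particular $w$, is good, i.e. $2<\rho(\dot H_U(w))\le\lambda^\ast$.

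For the interior case $3\le i\le t-2$ I would first show $u_{i+1}\notin U_1$. If $u_{i+1}\in U_1$, the vertices $\{v_{i-2},v_{i-1},v_i,v_{i+1},v_{i+2},u_{i+1},w\}$ induce an unbalanced $4$-cycle $v_iv_{i+1}v_{i+2}u_{i+1}$ whose degree-$4$ vertex $v_i$ carries, in addition, the pendant $w$ and the length-$2$ path $v_{i-1}v_{i-2}$; a direct eigenvalue computation gives $\rho>\lambda^\ast$ (it is one of the $\dot F_i$), and this is where the hypothesis $i\ge3$ is used, since it guarantees that $v_{i-2}$ exists. The mirror argument, using $i\le t-2$ so that $v_{i+2}$ exists, gives $u_{i-1}\notin U_1$. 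It then remains to exclude $U_1$-vertices on both sides of $v_i$ at once: if $u_a,u_b\in U_1$ with $a\le i-2$ and $b\ge i+2$, the two diamonds joined by the sub-path through $v_i$ form a bicyclic graph of type $\dot{\mathcal B}_r^{4,4}$ carrying the extra pendant $w$ in its interior, which by Lemmas \ref{base} and \ref{lem-2.7} has spectral radius exceeding $\lambda^\ast$. Once both-sided configurations are ruled out, the four alternatives $(1)$--$(4)$ are exactly: $u_i\in U_1$ with the remaining $U_1$-vertices to the right, the same with them to the left, all of $U_1$ to the right, or all to the left; the gaps ``$\ge i+2$'' and ``$\le i-2$'' are then automatic from $u_{i\pm1}\notin U_1$.

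For the two boundary cases I would treat $w\sim v_2$ and deduce $w\sim v_{t-1}$ by the left--right symmetry of $\dot T_{2t}^\prime$. First $u_2\in U_1$ is forced by the goodness of $w$: if $u_2\notin U_1$ then $w$ can be appended to $\dot H$ without raising the spectral radius above $2$ (equivalently $\dot H_U(w)$ again embeds into some $\dot T_{2t^\prime}^\prime$), contradicting $2<\rho(\dot H_U(w))$ and the maximal choice of $\dot H$. With $u_2\in U_1$, the vertices $u_1$ and $u_3$ are excluded because each would create a second $4$-cycle sharing an edge with the $u_2$-diamond, i.e. an induced $\dot\Theta_{2,2,0}$. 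Finally $u_4,u_5\notin U_1$: together with $w$ and the sub-path emanating from $v_2$, such a vertex spans a $Q_{a,b,c}$ or an $\dot F_i$ of spectral radius larger than $\lambda^\ast$, which I would identify case by case.

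The hard part will be the signed spectral computations, because the bound is tight: the unbalanced $4$-cycle carrying only two pendants at its degree-$4$ vertex sits exactly at $\rho=2$, and it is only the additional length-$2$ path---available precisely when $3\le i\le t-2$---that pushes the value past $\lambda^\ast$. Consequently every exclusion must be checked against the correct unbalanced configuration rather than its unsigned analogue, and the delicate step is the forcing of $u_2\in U_1$ (and symmetrically $u_{t-1}\in U_1$), which rests on combining the goodness of $w$ with the maximality of $\dot H$ and of $t$ rather than on exhibiting a single forbidden subgraph.
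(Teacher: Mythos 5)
Your overall strategy coincides with the paper's: exclude the nearby $u_j$ by exhibiting forbidden induced subgraphs, and force $u_2\in U_1$ (resp.\ $u_{t-1}\in U_1$) from the goodness of $w$ together with the maximality of $\dot H$. Your treatment of the interior case, including the observation that it is the extra length-two leg at $v_i$ (hence the hypothesis $3\le i\le t-2$) that pushes the spectral radius past $\lambda^\ast$, matches the paper's appeal to $\dot F_1$ and $\dot F_3$.

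The gap is in the boundary case $(i)$ (and its mirror), and it comes from misreading the structure of $\dot T_{2t}^\prime$. In $\dot T_{2t}^\prime$ consecutive vertices $u_j,u_{j+1}$ are themselves adjacent (this is forced by the $4$-regularity of $\dot T_{2k}$ and is used explicitly in the proofs of Lemmas \ref{lemma4.12} and \ref{lem4.10}), so the two $4$-cycles $v_{j-1}v_jv_{j+1}u_j$ and $v_jv_{j+1}v_{j+2}u_{j+1}$ never \emph{induce} a $\dot\Theta_{2,2,0}$: the extra edge $u_ju_{j+1}$ is always present, and indeed $\dot T_{2t}^\prime$ contains all consecutive pairs while having spectral radius $2$. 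Hence your blanket claim that no two consecutive $u_j$ lie in $U_1$, and in particular your exclusion of $u_3$ in case $(i)$, is unjustified as stated. The exclusion of $u_1$ fails for a second reason: $u_1$ is adjacent only to $v_2$ (and possibly $u_2$), since $v_0$ does not exist, so it spans no diamond at all; the configuration $u_1,u_2\in U_1$ yields a $\dot\Theta_{1,1,1}$ (three internally disjoint paths of length $2$), not a $\dot\Theta_{2,2,0}$, and if $u_2\notin U_1$ it yields merely a second pendant at $v_2$. This also breaks your forcing of $u_2\in U_1$: the claim that $\dot H_U(w)$ embeds into some $\dot T_{2t^\prime}^\prime$ when $u_2\notin U_1$ silently assumes the pendant slot $u_1$ at $v_2$ is free, so $u_1\notin U_1$ must be established \emph{first}, whereas you derive it afterwards from $u_2\in U_1$ --- a circularity. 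The paper's order avoids this: it first excludes $u_1$ via the forbidden graph $\dot F_1$ (the degree-$4$ vertex $v_2$ carrying two pendants and two path-legs already forces a forbidden configuration), then invokes maximality to get $u_2\in U_1$, then rules out $u_3,u_4$ via $\dot F_1$ and $\dot C_4^1$, and rules out $u_5$ via the computation $\rho(\dot{\mathcal A}_2^{0,1,0})>\lambda^\ast$ from Lemma \ref{l3.5}. A smaller point: your appeal to Lemma \ref{base} for the ``two diamonds plus an interior pendant'' configuration is off target, since that lemma classifies only bicyclic \emph{base} graphs (no pendant vertices); the relevant tool there is the forbidden graph $\dot F_3$ of Lemma \ref{lem-2.7}.
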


 \begin{proof}
 $(i)$  By forbidding $\dot{F}_1$, then $u_{1}\not\in U_1$. Since $\dot{H}_U(w)$ is not an induced subgraph of $\dot{T}^\prime_{2t},$ then $u_2\in U_1.$ By forbidding $\dot{F}_1$ and $\dot{C}_4^1,$ then $u_4\not\in U_1$ and $u_3\not\in U_1$. 
 If $u_{5}\in U_1,$ then $\dot{G}[v_1,v_{2},v_{3},v_{4},v_5,u_2,u_{5},w]\sim \dot{\mathcal{A}}_2^{0,1,0}.$ 
 By  Lemma \ref{l3.5} (2), then $\rho(\dot{G})\ge \rho(\dot{\mathcal{A}}_2^{0,1,0})>\lambda^\ast,$ which is a  contradiction.
 Hence, $u_5\not\in U_1.$
 
 $(ii)$ The proof is similar to ($i$).
 
 $(iii)$
 By forbidding $\dot{F}_1$, then $u_{i\pm 1}\not\in U_1$.
By forbidding $\dot{F}_3$, it is impossible that $k_{j_1}\le i-2$ and $k_{j_2}\ge i+2$ for   $1\le j_1 <j_2 \le  \ell$.
  So, $k_1\ge i$ or $k_\ell\le i.$
If $k_1=i,$ then $k_2\ge  i+2$. Otherwise,  $k_1\ge  i+2.$
If $k_\ell=i,$  then $k_{\ell-1}\le  i-2$.  Otherwise,    $k_\ell\le i-2.$ 
 \end{proof}
Now we are going to give the main result of this subsubsection.
 Set   $V_1^\prime=\{v^\prime_{i_1},\dots,v^\prime_{i_p}\}$ where $v^\prime_{i}\mathop{\sim}\limits^{+} v_{i}$ for each $i=i_1,\dots,i_p$, and $U_1^\prime=\{u^\prime_{j_1},\dots,u^\prime_{j_q}\}$ where
 $u^\prime_{j}\mathop{\sim}\limits^{+} u_{j}$ for each $j=j_1,\dots,j_q.$
If $i_{s_1}=j_{s_2}$ for one $1\le s_1\le p$ and one $1\le s_2\le q,$ by forbidding $\dot{F}_1$,  $\dot{F}_2$ and $\dot{C}_4^1,$ we have $|U_1|=|V_1^\prime|=|U_1^\prime|=1$ and $|Y|=0.$
So, $\dot{G}\sim \dot{\mathcal{C}}_4[n_1,1,t-n_1,1]$ (see Lemma \ref{u1}).
 Otherwise, $i_{s_1}\ne j_{s_2}$ for all $s_1=1,\dots,p$ and all $s_2=1,\dots,q.$
Then there is a switching  isomorphic of $\dot{G}$ such that $u^\prime_{j}\not\sim u_{j}$ and $u^\prime_{j}\mathop{\sim}\limits^{+}v_{j}$ 
  for all $j=j_1,\dots,j_q,$ 
that is,
 $U_1^\prime$ becomes empty and $V_1^\prime=\{v^\prime_{i_1},\dots,v^\prime_{i_p},u^\prime_{j_1},\dots,u^\prime_{j_q}\}.$

   \begin{lemma}\label{c6}
Let $\dot{G} \in \overline{\mathcal{G}}_S^{\lambda^\ast}$ be a  $\{\dot{C}_k,\dot{\Theta}_{2,2,0}\}$-free $(k\ne 4)$ bipartite signed graph with $m\ge n+1$. Then
$\dot{G}$ is an induced subgraph of $\dot{\mathcal{A}}_3^{n_1,n_2,n_3},$ $\dot{\mathcal{A}}_5^{n_1,n_2}$ or
 $[\dot{G},v,s,u,\dot{H}]$, where  $(\dot{G},v), (\dot{H},u)  \in \{(P_4,v_2),(T_{a,1,a-1},v_{a-1}),$ $(\dot{Q}^\prime_{n_1,n_1},v_{n_1}), (\dot{G}_5^{10},v_{10})\}$, $s\ge 3,$ $a\ge 3$ and $n_1\ge 1.$
\end{lemma}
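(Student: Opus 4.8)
The plan is to read off the global shape of $\dot{G}$ from the purely local attachment data collected in Lemmas \ref{lemma4.12}--\ref{l4.14}, and then to match the resulting picture against the templates classified in Lemma \ref{l3.5}. I work inside the configuration already fixed above: $\dot{H}$ is a maximal induced subgraph with $\rho(\dot{H})\le 2$, taken inside $\dot{T}_{2t}'$ with $t$ maximal, and $t\ge 5$ (the values $t\in\{3,4\}$ having been ruled out by Algorithm 1, since no order-$(|V(\dot{H})|+1)$ extension survives there, and the tree/unicyclic possibilities having been settled by Theorem \ref{thm1} and Lemma \ref{u1}; note also that $\dot{H}$ must carry a $\dot{C}_4$, so the $\dot{S}_{14},\dot{S}_{16}$ cores do not arise once $\dot{G}$ is $\dot{\Theta}_{2,2,0}$-free). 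I keep the labelling $V(\dot{H})=V_1\cup U_1$ with $V_1=\{v_1,\dots,v_t\}$ and the partition $V(\dot{G})\setminus V(\dot{H})=V_0\cup V_1'\cup U_1'\cup Y$, where $V_0=\emptyset$ by Lemma \ref{lemma4.13}.

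First I would dispose of the overlap branch. If some $v'_{i_{s_1}}\in V_1'$ and $u'_{j_{s_2}}\in U_1'$ share an index, the argument displayed just before the lemma forces $\dot{G}\simeq\dot{\mathcal{C}}_4[n_1,1,t-n_1,1]$; but this graph is unicyclic, contradicting the standing hypothesis $m\ge n+1$, so this branch cannot occur. Hence $i_{s_1}\ne j_{s_2}$ always and, after the switching recorded there, I may take $U_1'=\emptyset$ and treat every remaining new vertex as lying in $V_1'\cup Y$, each attached to the spine $v_1\dots v_t$ exactly as prescribed by Lemmas \ref{l4.12} and \ref{l4.14}, with the edges among new vertices governed by the same forbidden configurations.

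The heart of the proof is to show that these attachments organise into two \emph{caps}, one at each end of the spine, each cap being one of $P_4$ (rooted at $v_2$), $T_{a,1,a-1}$ (rooted at $v_{a-1}$), $\dot{Q}'_{n_1,n_1}$ (rooted at $v_{n_1}$), or $\dot{G}_5^{10}$ (rooted at $v_{10}$), while the spine between them stays a clean $\dot{T}_{2s}'''$ bridge. Lemma \ref{l4.12} confines every $Y$-vertex to one of the two ends, where together with a single pendant it closes one unbalanced $\dot{C}_4$ (the germ of a $\dot{Q}'_{n_1,n_1}$- or $\dot{G}_5^{10}$-cap); Lemma \ref{l4.14} pins each $V_1'$-vertex to a single spine vertex with only the four listed local patterns, which grow a tree-cap of type $P_4$ or $T_{a,1,a-1}$. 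I would then run the same subcase bookkeeping as in the proof of Lemma \ref{lem4.10} (its Subcases 1.1--1.2): at a fixed end, any attempt to place a second decorating vertex beyond what one cap allows yields an explicit forbidden subgraph---$\dot{F}_1,\dot{F}_2,\dot{F}_3$, $\dot{C}_4^1$, an over-long $T_{2,3,4}$, or a $Q_{1,c+1,a}$---so each end carries exactly one cap and, by bipartiteness and $\dot{\Theta}_{2,2,0}$-freeness, the two caps do not interact across the interior.

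Assembling these facts, $\dot{G}$ is a cap--bridge--cap graph: when the two decorated regions sit at distance $s=2$ it is an induced subgraph of $\dot{\mathcal{A}}_3^{n_1,n_2,n_3}$ or $\dot{\mathcal{A}}_5^{n_1,n_2}$, and when the connecting spine has length $s\ge 3$ it is an induced subgraph of $[\dot{G},v,s,u,\dot{H}]$ with roots drawn from $\{(P_4,v_2),(T_{a,1,a-1},v_{a-1}),(\dot{Q}'_{n_1,n_1},v_{n_1}),(\dot{G}_5^{10},v_{10})\}$, which is precisely the asserted list; Lemma \ref{l3.5} then underwrites that these templates are the right normal forms. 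The main obstacle is exactly this middle step: converting the strictly local constraints of Lemmas \ref{l4.12}--\ref{l4.14} into the global statement that each end is a single admissible cap forces a careful, somewhat lengthy enumeration of how two nearby new vertices could coexist, each case being killed by exhibiting a concrete $\dot{F}_i$, $\dot{C}_4^1$, or forbidden $T_{a,b,c}/Q_{a,b,c}$ from Lemma \ref{lem-2.7}. Bipartiteness (no odd cycle can close between spine and cap) and $\dot{\Theta}_{2,2,0}$-freeness (no two caps can overlap along the spine) are what make this enumeration terminate with only the four cap types surviving.
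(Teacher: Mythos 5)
Your plan follows essentially the same route as the paper's proof: the same decomposition $V(\dot{G})\setminus V(\dot{H})=V_0\cup V_1^\prime\cup U_1^\prime\cup Y$ with $V_0=\emptyset$ and $U_1^\prime$ switched away, the same local constraints from Lemmas \ref{lemma4.12}, \ref{l4.12}, \ref{lemma4.13} and \ref{l4.14}, elimination by the same forbidden subgraphs, and the same final matching against the templates of Lemma \ref{l3.5}. The enumeration you defer is exactly what the paper carries out, organized as cases on $|Y|\in\{0,1,2\}$ and on $\dot{G}[V_1^\prime]\in\{K_1,2K_1,K_2^-,K_2^-\cup K_1,2K_2^-\}$ (with $t_1+t_2\le 2$ forced by forbidding $\dot{F}_3$, and with the $\dot{G}_5^{10}$ caps arising from the $K_2^-$ pairs rather than from $Y$-vertices), so your outline is correct in approach and conclusion.
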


  \begin{proof}

Since $\dot{G}$ is $\{\dot{\Theta}_{2,2,0},\dot{C}_k\}$-free $(k\ne 4),$ then   $\dot{G}[V_1^\prime]=t_1K_2^-\cup t_2K_1$, where $t_1+t_2\le 2$ (by forbidding $\dot{F}_3$).
By Lemma \ref{l4.12}, we break  into three cases.

\textbf{Case 1.} $|Y|=0.$

\textbf{Subcase 1.1.}  $\dot{G}[V_1^\prime]=K_1.$ Then $V_1^\prime=\{v^\prime_{i_1}\}.$ Since $t$ is maximal, then $2\le i_1\le t-1$. 
Recall that  $|V(\dot{H})|$ is maximal.
By Lemma \ref{l4.14}, then we have

\textbf{Subsubcase 1.1.1.} $i_1=2$ or $i_1=t-1.$ Then $\dot{G}\subset [\dot{Q}^\prime_{1,0},v_1,s]\subset [\dot{Q}^\prime_{1,1},v_1,s]$.

\textbf{Subsubcase 1.1.2.} $3\le i_1\le t-2.$ 
If $k_1=i_1$ or $k_\ell =i_1,$ then $\dot{G}\subset [\dot{Q}^\prime_{n_1,n_2},v_{n_2},s]$ ($n_1\ge 1$), where $n_2=n_1$  (by Lemma \ref{l3.5} (2)).
If $k_1=i_1+2$ or $k_\ell =i_1-2,$ then $\dot{G}\subset [P_\ell,v_2,s]$, where $\ell=4$ (by forbidding $Q_{1,1,3}$).
If $k_1\ge i_1+3$ or $k_\ell \le i_1-3,$ then $\dot{G}\subset [T_{a,1,b},v_{b},s]$ ($a\ge 3$ and $b\ge 1$), where  $b= a-1$ (by Lemma \ref{l3.5} (1)).

\textbf{Subcase 1.2.}  $\dot{G}[V_1^\prime]=2K_1.$ Then $V_1^\prime=\{v^\prime_{i_1}, v^\prime_{i_2}\}.$ Similar to subcase 1.1,
then $\dot{G}\subset \dot{\mathcal{A}}_3^{n_1,n_2,n_3}$ or
$\dot{G}\subset [\dot{G},v,s,u,\dot{H}]$, where  $(\dot{G},v), (\dot{H},u) \in \{(T_{a,1,a-1},v_{a-1}),(\dot{Q}^\prime_{n_1,n_1},v_{n_1}),$ $(P_4,v_2)\}$  ($a\ge 3$ and
$n_1\ge 1$).

\textbf{Subcase 1.3.}  $\dot{G}[V_1^\prime]=K_2^-.$
Then $V_1^\prime=\{v^\prime_{i_1}, v^\prime_{i_2}\},$ $i_2=i_1+1$ and $v^\prime_{i_{1}}\mathop{\sim}\limits^{-} v^\prime_{i_{2}}.$ Without loss of generality, we may assume that $i_1\le \lfloor\frac{t}{2}\rfloor.$
Since $|V(\dot{H})|$ is maximal, then $i_2\ge 4$. By Lemma  \ref{l4.14}, then  $k_1\ge i_{2}+2$.
 So, $Q_{i_2-1,k_1-i_2-1,1}\subset \dot{G}.$
By forbidding $Q_{i_2-1,b,1}$ (where $b\le i_2-2$),
 then $k_1\ge 2i_2$.
If $i_2=5$ or $i_2\ge 6$,  then $\dot{F}_8\subset \dot{G}$ or $\dot{F}_4\subset \dot{G}$, respectively, which contradicts to Lemma \ref{lem-2.7}. So, $i_2=4$ and
$\dot{G}\subset [G_5^{10},v_{10},s].$

\textbf{Subcase 1.4.}  $\dot{G}[V_1^\prime]=K_2^-\cup K_1.$
Then $V_1^\prime=\{v^\prime_{i_1}, v^\prime_{i_2},v^\prime_{i_3}\},$ where $i_2=i_1+1$ and $v^\prime_{i_{1}}\mathop{\sim}\limits^{-} v^\prime_{i_{2}}.$
By subcases 1.1 and 1.3, then $\dot{G}\subset \dot{\mathcal{A}}_5^{n_1,n_2}$ or
$\dot{G}\subset [G_5^{10},v_{10},s,v,\dot{G}]$, where $(\dot{G},v)  \in\{(P_4,v_2),$  $(T_{a,1,a-1},v_{a-1}),(\dot{Q}^\prime_{n_1,n_1},v_{n_1})\}$ ($a\ge 3$ and
$n_1\ge 1$).

\textbf{Subcase 1.5.}  $\dot{G}[V_1^\prime]=2K_2^-.$ Similar to subcase 1.3, then
$\dot{G}\subset [G_5^{10},v_{10},s,v_{10},G_5^{10}].$

\textbf{Case 2.} $|Y|=1.$ By forbidding $\dot{F}_1$ and $\dot{F}_3$, then $t_1+t_2\le 1.$

\textbf{Subcase 2.1.} $|V_1^\prime|=0$.  By  Lemma \ref{l4.12}, then  $\dot{G}\subset [\dot{Q}^\prime_{1,0},v_1,s]\subset [\dot{Q}^\prime_{1,1},v_1,s].$

\textbf{Subcase 2.2.}  $\dot{G}[V_1^\prime]=K_1.$ By subcases 1.1 and 2.1, then $\dot{G}\subset \dot{\mathcal{A}}_3^{0,n_2,n_3}$ or $\dot{G} \subset [\dot{Q}^{\prime}_{1,1},v_{1},s,v,\dot{G}],$ where  $(\dot{G},v)\in   \{(T_{a,1,a-1},v_{a-1}),(\dot{Q}^\prime_{n_1,n_1},v_{n_1}),(P_4,v_2)\}$ ($a\ge 3$ and
$n_1\ge 1$).

\textbf{Subcase 2.3.}  $\dot{G}[V_1^\prime]=K_2^-.$ By subcases 1.3 and 2.1, then $\dot{G}\subset \dot{\mathcal{A}}_5^{0,n_2}$  or $\dot{G} \subset [\dot{Q}^{\prime}_{1,1},v_{1},s,v_{10},\dot{G}_5^{10}]$.

\textbf{Case 3.} $|Y|=2.$ Then $k_1=1$ and $k_\ell=t.$ By forbidding $\dot{F}_1$ and $\dot{F}_3$,  then $V_1^\prime$ is empty. 
If $|U_1|=2,$ then $\dot{G}\sim \dot{\mathcal{A}}_3^{0,0,n_3}$. If 
 $|U_1|\ge 3,$ 
 then $6\le k_2<\dots< k_{\ell-1}\le t-5$ (by Lemma \ref{l4.12}). So
 $\dot{G}\subset [\dot{Q}^\prime_{1,0},v_1,s,v_1,\dot{Q}^\prime_{1,0}]\subset [\dot{Q}^\prime_{1,1},v_1,s,v_1,\dot{Q}^\prime_{1,1}]$.

This completes the proof.
 \end{proof}

 \begin{remark}
 Finally, it remains the case that $\dot{H}$ is an induced subgraph of $\dot{S}_{14}$ or $\dot{S}_{16}$ but  not an induced subgraph of $\dot{T}_{2k}$.
Since 
   $m\ge n+1,$ then $\dot{\mathcal{B}}_r^{4,4},$ $\dot{\mathcal{B}}^{4,4}_0$ or $\dot{\Theta}_{1,1,1}$ is an induced subgraph of $\dot{G}.$  More importantly, $\dot{\mathcal{B}}_r^{4,4},$ $\dot{\mathcal{B}}^{4,4}_0$ or $\dot{\Theta}_{1,1,1}$ is also an induced subgraph of $\dot{H}.$
   From Fig. \ref{figure4}, it is not too hard to get that there is  no such signed graph $\dot{H}$.
 \end{remark}

  \noindent\emph{Proof of Theorem \ref{t2.4}.}
  Summarizing with all results of Theorems \ref{thm1} and 2.2, Lemmas \ref{l3.5}, \ref{u1}, \ref{10}, \ref{8}, \ref{c61}, \ref{l4.6}, \ref{lem4.9}, \ref{lem4.10}, \ref{c6} and \ref{4.13}, we complete the  proof of Theorem \ref{t2.4}.
  \qed
\vskip 0.4 true cm
\begin{center}{\textbf{Acknowledgments}}
\end{center}

 This project  is supported by the National Natural Science Foundation of China (No.119\\71164, 12001185, 12100557) and the Zhejiang Provincial Natural Science Foundation of China (LQ21A010004).

\baselineskip=0.30in

\section*{Appendix A}

 Gill and Acharya \cite{G82} obtained the following recurrence formula for the characteristic polynomial of a signed graph.
 \begin{lemma}\label{l4-12} \cite{G82}
Let $\dot{G}$ be a signed graph and $v$ be its arbitrary vertex. Then
  $$\phi_{\dot{G}}(x)=x\phi_{\dot{G}-v}(x)-\sum_{vu\in E(\dot{G})}\phi_{\dot{G}-v-u}(x)-2\sum_{\dot{C}\in \dot{C}(v)}\sigma(\dot{C})\phi_{\dot{G}-\dot{C}}(x),$$
  where $\dot{C}(v)$ denotes the set of signed cycles passing through $v,$ and $\dot{G}-\dot{C}$ denotes the signed graph obtained from $\dot{G}$ by deleting $\dot{C}$.
  \end{lemma}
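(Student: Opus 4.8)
The plan is to derive this recurrence from the signed analogue of Sachs' coefficient theorem, which expresses the coefficients of $\phi_{\dot{G}}(x)=\det(xI-A(\dot{G}))$ in terms of the \emph{elementary} (or basic) subgraphs of $\dot{G}$, that is, the subgraphs in which every component is either a single edge ($K_2$) or a cycle. First I would expand $\det(xI-A(\dot{G}))$ by the Leibniz permutation formula and group the permutations according to the graph-theoretic structure of their non-fixed points: fixed points supply factors of $x$, a transposition corresponds to an edge of $\dot{G}$, and a permutation cycle of length $\ell\ge 3$ corresponds to a graph cycle traversed in one of its two directions. Because $A(\dot{G})$ is symmetric with entries $\sigma_{ij}=\sigma(v_iv_j)$, both directions of a cycle $\dot{C}$ carry the same product $\sigma(\dot{C})=\prod_{e\in\dot{C}}\sigma(e)$, and a short check of the permutation sign $(-1)^{\ell-1}$ against the $(-1)^{\ell}$ coming from the $-A$ entries shows that each oriented cycle contributes $-\sigma(\dot{C})$, so the unoriented cycle contributes $-2\sigma(\dot{C})$. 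Collecting all contributions yields
$$\phi_{\dot{G}}(x)=\sum_{U}(-1)^{p(U)}\,2^{c(U)}\Big(\prod_{\dot{C}\subseteq U}\sigma(\dot{C})\Big)\,x^{\,n-|V(U)|},$$
where $U$ ranges over all elementary subgraphs, $p(U)$ is the number of components of $U$ and $c(U)$ its number of cyclic components.

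With this formula in hand, I would fix the vertex $v$ and partition the elementary subgraphs $U$ of $\dot{G}$ according to the role played by $v$. If $v\notin V(U)$, then $U$ is exactly an elementary subgraph of $\dot{G}-v$ and the uncovered vertex $v$ supplies one extra factor of $x$, so these contributions assemble into $x\,\phi_{\dot{G}-v}(x)$. If $v$ lies in an edge component $\{v,u\}$ of $U$, then deleting this component leaves an elementary subgraph of $\dot{G}-v-u$; the edge contributes $(-1)^{1}2^{0}=-1$ to the weight, and since $\sigma_{vu}^{2}=1$ the edge sign drops out, so summing over the neighbours $u$ of $v$ gives $-\sum_{vu\in E(\dot{G})}\phi_{\dot{G}-v-u}(x)$. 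Finally, if $v$ lies on a cyclic component $\dot{C}$ of $U$, then deleting $\dot{C}$ leaves an elementary subgraph of $\dot{G}-\dot{C}$; the cycle contributes $(-1)^{1}2^{1}\sigma(\dot{C})=-2\sigma(\dot{C})$, so summing over the cycles $\dot{C}\in\dot{C}(v)$ through $v$ gives $-2\sum_{\dot{C}\in\dot{C}(v)}\sigma(\dot{C})\,\phi_{\dot{G}-\dot{C}}(x)$. Adding the three groups reproduces exactly the asserted identity.

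The routine part is the bijective bookkeeping in the three cases, which is immediate once the coefficient theorem is available. The hard part will be the signed coefficient theorem itself, and inside it the correct handling of cycles: one must verify that $\sigma(\dot{C})$ is well defined independently of the starting vertex and orientation (which follows from the symmetry $\sigma_{ij}=\sigma_{ji}$), and that the two orientations of each graph cycle combine to the factor $2\sigma(\dot{C})$ rather than cancelling. This is precisely where the signed case departs from the classical Schwenk–Sachs formula, the only new ingredient being the cycle-sign factor $\sigma(\dot{C})$, which is carried along unchanged through the determinant expansion.
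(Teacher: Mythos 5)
Your proposal is correct. The paper itself gives no argument for this lemma --- it is quoted verbatim from Gill and Acharya \cite{G82} --- so there is nothing in the text to compare against line by line; what you have written is a sound, self-contained derivation along the standard route (the same one used for Schwenk's unsigned recurrence), namely: establish the signed Sachs coefficient theorem by expanding $\det(xI-A(\dot G))$ over permutations and grouping them by their associated elementary subgraphs, then partition the elementary subgraphs according to whether $v$ is uncovered, covered by an edge component, or covered by a cycle component. Your sign bookkeeping checks out: a component on $\ell$ vertices contributes $(-1)^{\ell-1}$ to $\operatorname{sgn}(\pi)$ and $(-1)^{\ell}$ from the entries of $-A$, so each component carries a net factor $-1$, giving the weight $(-1)^{p(U)}$; an edge component contributes $\sigma_{vu}^2=1$ so its sign disappears; and each unoriented cycle $\dot C$ picks up $2\sigma(\dot C)$ from its two traversals, both carrying the same product $\prod_{e\in\dot C}\sigma(e)$ by symmetry of $A(\dot G)$. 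The three groups then assemble into $x\,\phi_{\dot G-v}$, $-\sum_{vu}\phi_{\dot G-v-u}$ and $-2\sum_{\dot C\in\dot C(v)}\sigma(\dot C)\,\phi_{\dot G-\dot C}$ exactly as claimed. The only point worth making explicit in a final write-up is the bijection in each case (e.g.\ that every elementary subgraph of $\dot G-v-u$ extends uniquely to one of $\dot G$ containing the edge $vu$ as a component), but this is routine.
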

   Denote $p_n$ and $q_n$ the characteristic polynomials of $P_n$ and $C_n,$ respectively. Then $$p_0(x)=1,~p_1(x)=x,~p_n(x)=xp_{n-1}(x)-p_{n-2}(x),~q_{n+1}(x)=p_{n+1}(x)-p_{n-1}(x)-2,$$ for all $n\ge2.$
  Moreover, the recursion gives
  \begin{equation}\label{e4.2}
  p_n(x)=\frac{\theta^{2n+2}-1}{\theta^{n+2}-\theta^n}
  ,~~q_n(x)=\theta^n+\theta^{-n}-2, ~~\text{where}~~ \theta=\theta(x):=\frac{x+\sqrt{x^2-4}}{2}.
\end{equation}
Then
\begin{equation}\label{e4.3}
\begin{split}
&\phi_{T_{a,1,b}}(x)=xp_{a+b+1}-p_ap_b, \\
&\phi_{Q_{a,b,c}}(x)=x^2p_{a+b+c+1}-xp_{a+b}p_c-xp_ap_{b+c}+p_ap_{b-1}p_c.
\end{split}
\end{equation}

 Let  $\dot{T}_{2k}$ be the signed graph depicted in Fig. \ref{figure4} with the adjacency matrix $A_\sigma,$ and let $V(\dot{T}_{2k})=V_1\cup V_2$
be a partition of the vertex set. Then
$$A_\sigma=\begin{bmatrix}
A_1&B\\
B^T&A_2
\end{bmatrix},~~\text{where $A_i$ ($i=1,2$) is the adjacency matrix of  $\dot{T}_{2k}[V_i].$
}$$
It is known that the spectrum of $\dot{T}_{2k}$ is $\{2^{k},-2^{k}\}$ (see \cite[Theorem 1]{MS07}). Then $$({A_\sigma})^2=\begin{bmatrix}
(A_1)^2+BB^T&A_1B+BA_2\\
B^TA_1+A_2B^T&B^TB+({A_2})^2\\
\end{bmatrix}=4I_{2k}~~\text{and}~~B^TA_1+A_2B^T=\textbf{0}.$$

\begin{lemma}\label{lem4.18}
 If $\mu\ne \pm2$ is an eigenvalue of 
$\dot{T}_{2k}[V_1],$ then $-\mu$ is an eigenvalue of 
$\dot{T}_{2k}[V_2].$
\end{lemma}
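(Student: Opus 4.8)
The plan is to exploit the two block identities that follow from $({A_\sigma})^2=4I_{2k}$, namely $A_1^2+BB^T=4I$ (the top-left block) and $B^TA_1+A_2B^T=\mathbf{0}$ (the lower-left block, already recorded in the excerpt). The second identity rewrites as $A_2B^T=-B^TA_1$, and this is exactly the intertwining relation that will send an eigenvector of $A_1$ to an eigenvector of $A_2$ with the sign of the eigenvalue flipped.

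Concretely, first I would take a nonzero vector $x$ with $A_1x=\mu x$, where $\mu\neq\pm 2$ is the given eigenvalue of $\dot{T}_{2k}[V_1]$. Applying the relation $A_2B^T=-B^TA_1$ to $x$ gives
$$A_2(B^Tx)=-B^TA_1x=-\mu\,(B^Tx),$$
so $B^Tx$ is a candidate eigenvector of $A_2=A(\dot{T}_{2k}[V_2])$ for the eigenvalue $-\mu$. It remains only to verify that this candidate is nonzero, and this is the one point where the hypothesis $\mu\neq\pm2$ is actually consumed.

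To that end I would use the top-left block identity $A_1^2+BB^T=4I$. Applying it to $x$ yields $\mu^2x+BB^Tx=4x$, hence $BB^Tx=(4-\mu^2)x$. Since $\mu\neq\pm2$ we have $4-\mu^2\neq 0$, and since $x\neq 0$ the right-hand side is nonzero; therefore $BB^Tx\neq \mathbf{0}$. If we had $B^Tx=\mathbf{0}$ then $BB^Tx=B(B^Tx)=\mathbf{0}$, a contradiction, so $B^Tx\neq\mathbf{0}$. Consequently $B^Tx$ is a genuine eigenvector of $A_2$ with eigenvalue $-\mu$, proving that $-\mu$ is an eigenvalue of $\dot{T}_{2k}[V_2]$.

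The argument is short and essentially forced by the two block equations, so there is no serious obstacle; the only subtlety—and the step I would take care to justify cleanly—is the nonvanishing of $B^Tx$, for it is precisely the exclusion $\mu\neq\pm2$ that makes $4-\mu^2\neq0$ and thus guarantees $B^Tx\neq\mathbf{0}$. (One should also note that when $\mu=\pm2$ the same computation gives $BB^Tx=\mathbf{0}$, which is consistent with the fact that the conclusion may genuinely fail there, explaining why the hypothesis is needed rather than merely convenient.)
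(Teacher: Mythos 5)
Your proof is correct and follows essentially the same route as the paper: both extract the intertwining relation $A_2B^T=-B^TA_1$ from $(A_\sigma)^2=4I_{2k}$ and apply it to an eigenvector $x$ of $A_1$. The only minor difference is how $B^Tx\neq\mathbf{0}$ is verified: the paper notes that $B^Tx=\mathbf{0}$ would make the extension of $x$ by zeros an eigenvector of $A_\sigma$ for $\mu$, contradicting the fact that the spectrum of $A_\sigma$ is $\{\pm 2\}$, whereas you read the nonvanishing off the diagonal block identity $A_1^2+BB^T=4I$ via $BB^Tx=(4-\mu^2)x$; both arguments are valid consequences of $(A_\sigma)^2=4I_{2k}$.
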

\begin{proof}
Let $\beta$ be the eigenvector corresponding to the eigenvalue $\mu\ne \pm2$ of $\dot{T}_{2k}[V_1].$
If $B^T\beta=\textbf{0},$ then
$$A_\sigma\begin{bmatrix}
\beta\\
\textbf{0}\\
\end{bmatrix}=\begin{bmatrix}
A_1&B\\
B^T&A_2\\
\end{bmatrix}\begin{bmatrix}
\beta\\
\textbf{0}\\
\end{bmatrix}=\begin{bmatrix}
A_1\beta\\
B^T\beta\\
\end{bmatrix}=\begin{bmatrix}
A_1\beta\\
\textbf{0}\\
\end{bmatrix}=\mu\begin{bmatrix}
\beta\\
\textbf{0}\\
\end{bmatrix}.$$
This gives that $\mu$ is an eigenvalue of $A_\sigma$ and $\mu=\pm 2,$ which  contradicts to hypothesis. 
So $B^T\beta\neq\textbf{0}.$ Since $B^TA_1+A_2B^T=\textbf{0},$
then  $A_2B^T\beta=-B^TA_1\beta=-B^T\mu\beta=-\mu B^T\beta.$
Hence, $-\mu$ is an eigenvalue of $\dot{T}_{2k}[V_2].$ 
\end{proof}

\begin{corollary}\label{lemma4.2}
$(i)$ $\phi_{\dot{T}_{2s}^\prime}(x)=x^2(x\pm 2)^{s-1},$ 
$(ii)$ $\phi_{\dot{T}_{2s}^{\prime\prime}}(x)=x(x\pm \sqrt{2})(x\pm 2)^{s-2},$
$(iii)$ $\phi_{\dot{T}_{2s}^{\prime\prime\prime}}(x)=(x\pm \sqrt{2})^2(x\pm 2)^{s-3}.$
\end{corollary}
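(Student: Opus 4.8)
The plan is to read off the three characteristic polynomials directly from the structural identity established just before the statement: for any partition $V(\dot{T}_{2k})=V_1\cup V_2$, the top-left block of $A_\sigma^2=4I_{2k}$ gives $A_1^2=4I-BB^T$, where $A_1$ is the adjacency matrix of $\dot{T}_{2k}[V_1]$ and $B$ is the coupling between $V_1$ and $V_2$. First I would exhibit each of $\dot{T}_{2s}^{\prime}$, $\dot{T}_{2s}^{\prime\prime}$, $\dot{T}_{2s}^{\prime\prime\prime}$ as an induced subgraph $\dot{T}_{2k}[V_1]$ for a suitable $k$, choosing $V_2$ to be exactly the few vertices cut off at one end. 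The matrix $B$ is then supported on the handful of boundary vertices adjacent to $V_2$, so $BB^T$ has small rank, and its nonzero eigenvalues can be computed from the one or two boundary edges created by the cut. I would also record that each of the three gadgets has a \emph{bipartite} underlying graph, so the diagonal $\pm1$ switching $D$ that is $+1$ on one colour class and $-1$ on the other satisfies $DAD=-A$; hence the spectrum of each gadget is symmetric about $0$.

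With $BB^T$ in hand the spectrum of $A_1^2=4I-BB^T$ is immediate: it is $4$ with high multiplicity together with the values $4-\nu$ as $\nu$ runs over the nonzero eigenvalues of $BB^T$. I expect $BB^T$ to have nonzero spectrum $\{4^2\}$ for $\dot{T}_{2s}^{\prime}$, $\{4,2^2\}$ for $\dot{T}_{2s}^{\prime\prime}$, and $\{2^4\}$ for $\dot{T}_{2s}^{\prime\prime\prime}$, so that the \emph{squares} of the eigenvalues of the three graphs are $\{4^{2s-2},0^2\}$, $\{4^{2s-4},2^2,0\}$, and $\{4^{2s-6},2^4\}$ respectively. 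Taking square roots yields the absolute values of the eigenvalues; the $0$-symmetry from the previous paragraph then splits each positive square $\nu$ of multiplicity $2m$ into $+\sqrt{\nu}$ and $-\sqrt{\nu}$, each of multiplicity $m$, while $\ker A_1^2$ supplies the $x$-power. Here the shorthand $(x\pm2)$ and $(x\pm\sqrt2)$ stands for $(x-2)(x+2)=x^2-4$ and $(x-\sqrt2)(x+\sqrt2)=x^2-2$, which is exactly what this $0$-symmetry forces. Assembling the factors reproduces $x^2(x\pm2)^{s-1}$, $x(x\pm\sqrt2)(x\pm2)^{s-2}$, and $(x\pm\sqrt2)^2(x\pm2)^{s-3}$, and Lemma \ref{lem4.18}, which matches the off-$(\pm2)$ spectra of $V_1$ and $V_2$, serves as an independent consistency check on the multiplicities.

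The main obstacle is the first step: pinning down, for each gadget, the concrete partition of $\dot{T}_{2k}$ that realises it as $\dot{T}_{2k}[V_1]$, and then extracting the rank and nonzero eigenvalues of $BB^T$ from the boundary edges so that they come out as $\{4^2\}$, $\{4,2^2\}$, $\{2^4\}$ as claimed; a miscount here would misreport the number of $0$- and $(\pm\sqrt2)$-eigenvalues. This is a finite, essentially combinatorial determination of $B$, but it must be carried out carefully. As a safeguard, and to cover the small base cases $s=1,2,3$ where the cutting argument has too little room, I would instead compute $\phi$ from the signed-graph recurrence of Lemma \ref{l4-12}: peeling off an end vertex of each gadget reduces $\dot{T}_{2s}^{(\prime)}$ to smaller members of the same families, producing a constant-coefficient recurrence whose solution matches the stated products and independently confirms the eigenvalue computation above.
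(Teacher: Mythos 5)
Your proposal is correct, and it reaches the statement by a genuinely different route from the paper's. The paper pins down the multiplicity of $\pm 2$ by interlacing (Lemma \ref{Lem2.5}) applied to the decompositions $V(\dot{T}_{2(s+1)})=V(\dot{T}_{2s}^{\prime})\cup V(2K_1)$, etc., and then imports the remaining eigenvalues from the complementary pieces $2K_1$, $(P_3,\sigma)$, $\dot{C}_4^-$ via Lemma \ref{lem4.18}, i.e.\ via the off-diagonal block $B^TA_1+A_2B^T=\mathbf{0}$ of $A_\sigma^2=4I$; you instead work with the diagonal blocks. Two remarks. First, the ``main obstacle'' you flag (determining $B$ and the spectrum of $BB^T$ by hand from the boundary edges) dissolves if you also use the bottom-right block: $B^TB=4I_{|V_2|}-A_2^2$, and since $BB^T$ and $B^TB$ have the same nonzero spectrum, your expected multisets $\{4^2\}$, $\{4,2^2\}$, $\{2^4\}$ follow immediately from the spectra $\{0^2\}$, $\{\pm\sqrt2,0\}$, $\{(\pm\sqrt2)^2\}$ of $2K_1$, $(P_3,\sigma)$, $\dot{C}_4^-$ --- exactly the same input the paper feeds into Lemma \ref{lem4.18}; positive definiteness of $B^TB$ in each case also forces $BB^T$ to have exactly $|V_2|$ nonzero eigenvalues, so the multiplicities you need are all accounted for. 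Second, your bipartiteness claim is true (the three gadgets live in the ``path part'' of $\dot{T}_{2(s+1)}$, all of whose edges join consecutive pairs $\{v_i,u_i\}$, $\{v_{i+1},u_{i+1}\}$), but if you prefer not to argue from the figure, the sign-split is already forced by the vanishing trace of $A_1$ together with the linear independence of $2$ and $\sqrt2$ over $\mathbb{Q}$. In exchange for this extra symmetry step, your argument treats all eigenvalues uniformly through $A_1^2=4I-BB^T$, whereas the paper needs interlacing as a separate ingredient for the $\pm2$'s; your fallback to the recurrence of Lemma \ref{l4-12} for small $s$ is sensible, since the cutting argument requires $s+1\ge 3$.
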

\begin{proof}
Note that 
$V(\dot{T}_{2(s+1)})=V(\dot{T}_{2s}^\prime)\cup V(2K_1),$
$V(\dot{T}_{2(s+1)})=V(\dot{T}_{2s}^{\prime\prime})\cup V(P_3,\sigma)$ and 
$V(\dot{T}_{2(s+1)})=V(\dot{T}_{2s}^{\prime\prime\prime})\cup V(\dot{C}_4^-).$
By Lemma \ref{Lem2.5}, then $\pm2$ is an eigenvalue of $\dot{T}_{2s}^\prime$ (resp. $\dot{T}_{2s}^{\prime\prime}$, $\dot{T}_{2s}^{\prime\prime\prime}$) with multiplicity at least $s-1$ (resp. $s-2$, $s-3$).

($i$) It is known that the spectrum of $2K_1$ is $\{0^2\}.$ By Lemma \ref{lem4.18},
then $0$ is an eigenvalue of $\dot{T}_{2s}^\prime$ with multiplicity  $2$.
 Therefore, $\phi_{\dot{T}_{2s}^\prime}(x)=x^2(x\pm 2)^{s-1}.$

The proofs of ($ii$) and ($iii$) are similar  since the spectrum of $(P_3,\sigma)$ is $\{\pm\sqrt{2},0\}$ and 
the spectrum of $\dot{C}_4^-$ is $\{\pm\sqrt{2}^2\}$.
\end{proof}

Let the pair $(\dot{G},v)$ be defined in  Theorem \ref{t2.4} ($iv$) and $(v)$, then
\begin{equation}\label{e4.4}
\begin{split}
\phi_{[\dot{G},v,s]}(x)&=x\phi_{\dot{T}_{2s}^{\prime\prime}}(x) \phi_{\dot{G}-v}(x)-\phi_{\dot{T}_{2(s-1)}^{\prime}}(x) \phi_{\dot{G}-v}(x)-\phi_{\dot{T}_{2s}^{\prime\prime}}(x) \sum_{uv\in E(\dot{G})}
\phi_{\dot{G}-v-u}(x)\\
&=x^2(x\pm \sqrt{2})(x\pm2)^{s-2} \phi_{\dot{G}-v}(x)-x^2(x\pm2)^{s-2} \phi_{\dot{G}-v}(x)\\
&~~~-x(x\pm \sqrt{2})(x\pm2)^{s-2} \sum_{uv\in E(\dot{G})}
\phi_{\dot{G}-v-u}(x) ~~\text{(by Corollary \ref{lemma4.2})} \\
&=(x\pm2)^{s-2}(x^2(x\pm \sqrt{2}) \phi_{\dot{G}-v}(x)-x^2 \phi_{\dot{G}-v}(x)-x(x\pm \sqrt{2}) \sum_{uv\in E(\dot{G})}
\phi_{\dot{G}-v-u}(x))\\
&=(x\pm2)^{s-2}\phi_{[\dot{G},v,2]}(x),
\end{split}
\end{equation}
and $\phi_{[\dot{G},v,s]}(\lambda^\ast)= (\sqrt{5}-2)^{s-2}\phi_{[\dot{G},v,2]}(\lambda^\ast).$
Similarly, we have
\begin{equation}\label{e4.5}
\begin{split}
&\phi_{(\dot{G},v,s)}(\lambda^\ast)= (\sqrt{5}-2)^{s-3}\phi_{(\dot{G},v,3)}(\lambda^\ast),\\&\phi_{[\dot{G},v,s,u,\dot{H}]}(\lambda^\ast)=(\sqrt{5}-2)^{s-3}\phi_{[\dot{G},v,3,u,\dot{H}]}(\lambda^\ast).\\
\end{split}
\end{equation}

By Lemma \ref{l4-12} and Eqs. (\ref{e4.2})$-$(\ref{e4.5}), we can obtain the characteristic polynomials of all signed graphs $\dot{G}$ of Theorem \ref{t2.4} $(iii)$ and $(iv)$.  More importantly,  by 
computations (using Wolfram Mathematica 12), we get that $\phi_{\dot{G}}(\lambda^\ast)>0$. See Tables 1 and 2. (In Table 1, $a_1=2 \sqrt{\sqrt{5}-1}$ and $a_2=\sqrt{\sqrt{5}-2}$).
Then we have
\begin{lemma}\label{4.13}
Let $\dot{G}$ be one of the signed graphs of Theorem \ref{t2.4} $(iii)$  and $(iv)$ with $n$ vertices. Then $\rho(\dot{G})< \lambda^\ast.$
\end{lemma}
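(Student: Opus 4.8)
The plan is to reduce the claim for every family to the single inequality $\phi_{\dot{G}}(\lambda^\ast)>0$ and then to upgrade this to $\rho(\dot{G})<\lambda^\ast$. First I would dispose of the unbounded path-parameter $s$ that appears throughout Theorem \ref{t2.4}~$(iv)$ (and, after the same reduction, in the families $\dot{S}_1^n,\dot{S}_2^n,\dot{G}_0^k$ of part $(iii)$). By Corollary \ref{lemma4.2} and Eqs.~(\ref{e4.4})--(\ref{e4.5}), appending a long $\dot{T}_{2s}^{\prime\prime}$ or $\dot{T}_{2s}^{\prime\prime\prime}$ only inserts eigenvalues lying in $\{0,\pm\sqrt{2},\pm2\}$, all of modulus at most $2<\lambda^\ast$; indeed $\phi_{[\dot{G},v,s]}(x)=(x\pm2)^{s-2}\phi_{[\dot{G},v,2]}(x)$ and $\phi_{[\dot{G},v,s,u,\dot{H}]}(\lambda^\ast)=(\sqrt{5}-2)^{s-3}\phi_{[\dot{G},v,3,u,\dot{H}]}(\lambda^\ast)$ with $\sqrt{5}-2>0$. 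Hence both the sign of $\phi_{\dot{G}}(\lambda^\ast)$ and the value of $\rho(\dot{G})$ are governed by the base length $s\in\{2,3\}$, and it suffices to settle those finitely many base graphs.

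For the base graphs I would expand $\phi_{\dot{G}}(x)$ in closed form using the recurrence of Lemma \ref{l4-12} together with the path- and cycle-polynomial formulas (\ref{e4.2})--(\ref{e4.3}). The decisive simplification at $x=\lambda^\ast$ is that the auxiliary quantity $\theta=\theta(\lambda^\ast)$ satisfies $\theta+\theta^{-1}=\lambda^\ast$ and $(\lambda^\ast)^2-4=\sqrt{5}-2$, so that $\theta^{2}=\tau$ is exactly the golden mean. Consequently $p_n(\lambda^\ast)=\dfrac{\tau^{\,n+1}-1}{\tau^{\,n/2}(\tau-1)}$ and $q_n(\lambda^\ast)=\tau^{\,n/2}+\tau^{-n/2}-2$, so every evaluation $\phi_{\dot{G}}(\lambda^\ast)$ collapses to an explicit expression in powers of $\tau^{1/2}$ (with the surds $2\sqrt{\sqrt{5}-1}$ and $\sqrt{\sqrt{5}-2}$ absorbed into the constants $a_1,a_2$ of Table~1). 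The surviving one-parameter families $T_{a,1,a-1}$, $\dot{Q}^{\prime}_{n_1,n_1}$, $\dot{G}_j^{m_j}$, $\dot{\mathcal{C}}_k^{1,\frac{k}{2}+1}$ and $\dot{U}^{n_1,n_2}_6$ then each yield a single function of their integer parameter, and I would record and verify $\phi_{\dot{G}}(\lambda^\ast)>0$ for all admissible values; this is precisely the content of Tables~1 and~2.

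Finally I would pass from $\phi_{\dot{G}}(\lambda^\ast)>0$ to $\rho(\dot{G})<\lambda^\ast$. Every graph in question is bipartite (as recorded in the reduction following Lemma \ref{10}), so its adjacency matrix has symmetric spectrum and $\rho(\dot{G})=\lambda_1(\dot{G})$; thus it is enough to force $\lambda_1<\lambda^\ast$. Since $\phi_{\dot{G}}$ is monic it tends to $+\infty$, so it suffices to show $\phi_{\dot{G}}(x)>0$ for all $x\ge\lambda^\ast$, which I would obtain by proving that within each family $\rho$ increases monotonically in the remaining parameter toward the limit point $\lambda^\ast$ and never attains it. The harmless eigenvalues $0,\pm\sqrt{2},\pm2$ produced by the $s$-reduction lie strictly below $\lambda^\ast$, so the strictness for the base case is inherited by the whole graph, and by interlacing (Lemma \ref{Lem2.5}) it passes to all its induced subgraphs, as required by the formulation of Theorem \ref{t2.4}.

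The hard part will be the uniform positivity over the unbounded integer parameters: checking one fixed graph is routine, but the $\tau^{1/2}$-expressions must be shown positive for all parameter values at once --- rewritten either as a monotone sequence with a positive limit or as a manifestly positive combination of powers of $\tau^{1/2}$ --- rather than verified only at finitely many numerical instances.
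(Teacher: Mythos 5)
Your reduction of the path parameter $s$ via the factorization $\phi_{[\dot{G},v,s]}(x)=(x\pm2)^{s-2}\phi_{[\dot{G},v,2]}(x)$ and your evaluation of $\phi_{\dot{G}}(\lambda^\ast)$ through the recurrences (\ref{e4.2})--(\ref{e4.5}) both match what the paper does, and you are right that the genuine work is the uniform positivity of these expressions over the unbounded integer parameters (that is exactly what Tables 1 and 2 record). The gap is in your final step. You correctly observe that the single evaluation $\phi_{\dot{G}}(\lambda^\ast)>0$ does not by itself exclude an even number of eigenvalues above $\lambda^\ast$, so you ask for $\phi_{\dot{G}}(x)>0$ on all of $[\lambda^\ast,\infty)$; but you propose to get this by "proving that within each family $\rho$ increases monotonically toward the limit point $\lambda^\ast$ and never attains it," which is circular --- that $\rho<\lambda^\ast$ for every member of the family is precisely the statement being proved, and nothing you have set up identifies $\lambda^\ast$ as the limit of these spectral radii.

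The paper closes this gap with a different and much more economical device: for each $\dot{G}$ it exhibits an induced subgraph $\dot{G}_1$ on $n-1$ vertices with $\lambda_1(\dot{G}_1)<\lambda^\ast$ (e.g.\ for $[\dot{G}_4^{12},v_{12},s,v_{12},\dot{G}_4^{12}]$ one deletes the copy of $v_{12}$ to land back in an already-controlled family). Cauchy interlacing (Lemma \ref{Lem2.5}) then gives $\lambda_2(\dot{G})\le\lambda_1(\dot{G}_1)<\lambda^\ast$, so in the factorization $\phi_{\dot{G}}(\lambda^\ast)=\prod_{i}(\lambda^\ast-\lambda_i(\dot{G}))$ every factor with $i\ge2$ is positive, and the sign of $\phi_{\dot{G}}(\lambda^\ast)$ equals the sign of $\lambda^\ast-\lambda_1(\dot{G})$. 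Hence the single pointwise evaluation $\phi_{\dot{G}}(\lambda^\ast)>0$ already forces $\lambda_1(\dot{G})<\lambda^\ast$, and bipartiteness converts this into $\rho(\dot{G})<\lambda^\ast$. You should replace your monotonicity claim with this interlacing argument; without it (or some equivalent control of $\lambda_2$), your proof does not go through.
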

\begin{proof}
Let $\dot{G}$ be one of the signed graphs of Theorem \ref{t2.4} $(iii)$  and $(iv)$, then we can get that 
$\dot{G}$ is bipartite and $\dot{G}$   contains an induced subgraph $\dot{G}_1$ with order $|V(\dot{G}_1)|=n-1$ and $\lambda_1(\dot{G}_1)< \lambda^\ast$.
(For example, if $\dot{G}$ is $[\dot{G}_4^{12},v_{12},s,v_{12},\dot{G}_4^{12}],$  let $\dot{G}_1=\dot{G}-v_{12},$ then 
$\lambda_1(\dot{G}_1)=\lambda_1(\dot{G}_4^{12},v_{12},s)=\lambda_1(\dot{G}_4^{12},v_{12},3)<\lambda^\ast$.) Then
$\lambda_2(\dot{G})\le \lambda_1(\dot{G}_1)< \lambda^\ast$ by interlacing theorem. 
Since  $\phi_{\dot{G}}(\lambda^\ast)>0$ (by Tables 1 and 2), then $\rho(\dot{G})=\lambda_1(\dot{G})< \lambda^\ast.$
\end{proof}

Finally we are going to give the proof of   Lemma \ref{l3.5}.

\noindent\emph{Proof of Lemma \ref{l3.5}.}
Let $\dot{G}$ be a signed graph in Lemma \ref{l3.5}. We can check that 
$\dot{G}$ is bipartite and  $\dot{G}$   contains an induced subgraph $\dot{G}_1$ with order $|V(\dot{G}_1)|=n-1$ and $\lambda_1(\dot{G}_1)< \lambda^\ast$. Then $\lambda_2(\dot{G})\le \lambda_1(\dot{G}_1)< \lambda^\ast$ by interlacing theorem.
So if $\phi_{\dot{G}}(\lambda^\ast)>0$, then $\rho(\dot{G})< \lambda^\ast.$ Now we just need to  prove that $\phi_{\dot{G}}(\lambda^\ast)>0.$ Obviously, if  
$\dot{G}$ is an induced subgraph of the signed graphs of Theorem \ref{t2.4} $(iv)$, then 
$\rho(\dot{G})< \lambda^\ast$ by Lemma \ref{4.13}.

(1) If $n_1\ge 3$, by forbidding $Q_{n_1,n_2+1,1}$ (where $n_2\le n_1-2$), then $n_2\ge n_1-1.$
Similarily, if $n_4\ge 3,$ then  $n_3\ge n_4-1.$ Therefore,
if $n_1\le 2$ and $n_4\le 2,$ then $\dot{\mathcal{A}}_1^{n_1,n_2,n_3, n_4}\subset [P_4,v_2,s,v_2,P_4];$
if $n_1\ge 3$ and $n_4\ge 3,$ then  $\dot{\mathcal{A}}_1^{n_1,n_2,n_3, n_4}\subset [T_{a_3,1,a_3-1},v_{a_3-1},s,v_{b_3-1},T_{b_3,1,b_3-1}];$ if  $n_1\le 2$ and $n_4\ge 3,$ or 
$n_1\ge 3$ and $n_4\le 2,$ then $\dot{\mathcal{A}}_1^{n_1,n_2,n_3, n_4}\subset [T_{a_3,1,a_3-1},v_{a_3-1},s,v_2,P_4],$ where $s,a_3,b_3\ge 3.$

(2)
By Lemma \ref{l4-12} and Eq. (\ref{e4.3}), then
\begin{equation}
\begin{split}
\phi_{\dot{\mathcal{A}}_2^{n_1,n_2,n_3}}(x)=&x^3p_{n_1+n_2+n_3+4}-x^2(p_{n_1+n_3+3}p_{n_2}+p_{n_1+1}p_{n_2+n_3+2}-2p_{n_1}p_{n_2+n_3+1})\\
&+x(p_{n_1+1}p_{n_3+1}p_{n_2}-p_{n_1}p_{n_2+n_3+4}-p_{n_1+3}p_{n_2+n_3+1}-2p_{n_1}p_{n_2}p_{n_3})\\
&+p_{n_1}p_{n_2}p_{n_3+3}+p_{n_1+3}p_{n_2}p_{n_3}.
\end{split}
\end{equation}

If $n_3\ge n_1+n_2+2$ and $n_2\le2$, then 
$\dot{\mathcal{A}}_2^{n_1,n_2,n_3}\subset  [\dot{Q}^\prime_{a_1,a_1},v_{a_1},s,v_{2},P_4]$ $(a_1\ge 1)$.

If $n_3\ge n_1+n_2+2$ and $n_2\ge 3$,
and  if $n_1=0$ and $n_3=n_2+2$, then 
 $\dot{\mathcal{A}}_2^{0,n_2,n_2+2}\sim  (\dot{Q}^\prime_{1,0},v_{1},2,v_{a_3-1},T_{a_3,1,a_3-1})$
 (where $\rho(\dot{\mathcal{A}}_2^{0,n_2,n_2+2})<\lambda^\ast$ by Table 3 (3)); otherwise,
 $\dot{\mathcal{A}}_2^{n_1,n_2,n_3}\subset  [\dot{Q}^\prime_{a_1,a_1},v_{a_1},s,v_{a_3-1},T_{a_3,1,a_3-1}]$,
  where $a_1\ge 1$, $a_3\ge3$ and $s\ge3$. 
  
  Next we  consider that $n_3\le n_1+n_2+1.$ 

\textbf{Case 1.} $n_2=1.$   

\textbf{Subcase 1.1.} $n_1\le1.$ If $n_3= 0,$ by computations, we have $\phi_{\dot{\mathcal{A}}_2^{0,1,0}}(\lambda^\ast)<0$
and $\phi_{\dot{\mathcal{A}}_2^{1,1,0}}(\lambda^\ast)<0.$
Then $n_3\ge 1$ and $\dot{\mathcal{A}}_2^{n_1,1,n_3}\subset [\dot{Q}^\prime_{1,1},v_{1},s].$

\textbf{Subcase 1.2.} $n_1\ge2.$ By forbidding $Q_{n_1+1,n_3+2,1}$ (where $n_3\le n_1-2$),
then $n_3\ge n_1-1.$ By  Table 3 (1), we have $\phi_{\dot{\mathcal{A}}_2^{n_1,1,n_1-1}}(\lambda^\ast)<0$.
Then $n_3\ge n_1$ and $\dot{\mathcal{A}}_2^{n_1,1,n_3}\subset [\dot{Q}^\prime_{n_1,n_1},v_{n_1},s].$

\textbf{Case 2.} $n_2=2.$  Then $n_3\le n_1+3$. Furthermore, by forbidding $Q_{2,n_3+1,2}$ (where $n_3\le 2$), then $n_3\ge 3.$

\textbf{Subcase 2.1.} $n_1=0.$
Then $n_3=3$. Note that $\rho(\dot{\mathcal{A}}_2^{0,2,3})=2.057<\lambda^\ast$, then 
$\dot{\mathcal{A}}_2^{0,2,3}\sim (\dot{Q}^\prime_{1,0},v_{1},2,v_2,P_4).$ 

\textbf{Subcase 2.2.} $1\le n_1\le 3.$  
If $n_3\le n_1+2,$ by computations, we have $\phi_{\dot{\mathcal{A}}_2^{n_1,2,n_3}}(\lambda^\ast)<0$.
Then $n_3\ge n_1+3$. So $\dot{\mathcal{A}}_2^{n_1,2,n_3}\subset [\dot{Q}^\prime_{n_1,n_1},v_{n_1},s,v_2,P_4]$ where $s\ge 3$. 
 
\textbf{Subcase 2.3.} $n_1\ge 4.$ By  forbidding $Q_{n_1+1,n_3+2,2}$ (where $n_3\le n_1+1$), then 
$n_3\ge n_1+2.$ If $n_3= n_1+2$, then $\phi_{\dot{\mathcal{A}}_2^{n_1,2,n_1+2}}(\lambda^\ast)>0$ (by Table 3 (2)) and $\dot{\mathcal{A}}_2^{n_1,2,n_1+2}\sim (\dot{Q}^\prime_{n_1,n_1},v_{n_1},2,v_2,P_4)$. If
$n_3\ge n_1+3$, then 
 $\dot{\mathcal{A}}_2^{n_1,2,n_3}\subset [\dot{Q}^\prime_{n_1,n_1},v_{n_1},s,v_2,P_4]$ where $s\ge 3$.

\textbf{Case 3.} $n_2\ge 3.$ By forbidding the graph  $Q_{a,b,c}\not\in \mathcal{G}^{\lambda^\ast},$
we obtain that
if $n_2=3$ and $1\le n_1\le 3$, or  $n_2=4$ and $n_1=2,$ then  $n_3\ge n_1+n_2;$ otherwise,
 $n_3\ge n_1+n_2+1$. 
 
 \textbf{Subcase 3.1.} $n_2=3$ and $n_1=1$. Then $4\le n_3\le 5.$   By computations, we have $\phi_{\dot{\mathcal{A}}_2^{1,3,4}}(\lambda^\ast)<0$ and $\phi_{\dot{\mathcal{A}}_2^{1,3,5}}(\lambda^\ast)<0$.

 \textbf{Subcase 3.2.} $n_2=3$ and $n_1=2$.  
 Then  $5\le n_3\le 6.$ 
   By computations, we have  $\phi_{\dot{\mathcal{A}}_2^{2,3,5}}(\lambda^\ast)<0$  and $\phi_{\dot{\mathcal{A}}_2^{2,3,6}}(\lambda^\ast)>0.$  So
 $\dot{\mathcal{A}}_2^{2,3,6}\sim (\dot{Q}^\prime_{2,2},v_{2},2,v_2,T_{3,1,2})$.

  \textbf{Subcase 3.3.} $n_2=3$ and $n_1=3$.  Then $6\le n_3\le 7.$
  By computations, we have  $\phi_{\dot{\mathcal{A}}_2^{3,3,6}}(\lambda^\ast)<0$  and $\phi_{\dot{\mathcal{A}}_2^{3,3,7}}(\lambda^\ast)>0.$  So
 $\dot{\mathcal{A}}_2^{3,3,7}\sim (\dot{Q}^\prime_{3,3},v_{3},2,v_2,T_{3,1,2})$.

  \textbf{Subcase 3.4.} $n_2=4$ and $n_1=2$. Then  $6\le n_3\le 7$. 
  By computations, we have $\phi_{\dot{\mathcal{A}}_2^{2,4,6}}(\lambda^\ast)<0$ and
  $\phi_{\dot{\mathcal{A}}_2^{2,4,7}}(\lambda^\ast)<0$.
   
   \textbf{Subcase 3.5.} $n_2\ge 3$ and $n_3=n_1+n_2+1$.
 Let $n_1=n_2+k.$ Then $$\phi_{\dot{\mathcal{A}}_2^{n_2+k,n_2,2n_2+k+1}}(\lambda^\ast)=2^{n_2} (\sqrt{5}+1)^{-k-2 n_2}
f(k,n_2),$$ where $f(k,n_2)=2^k (\sqrt{5}+1)^{n_2+1}((\frac{\sqrt{5}+3}{\sqrt{5}+1}) (\frac{\sqrt{5}+1}{2})^{k}+2 (\frac{2}{\sqrt{5}+1})^{n_2+1}-1)$.
   If $k\le -2,$ then $f(k,n_2)<0$ and $\phi_{\dot{\mathcal{A}}_2^{n_2+k,n_2,2n_2+k+1}}(\lambda^\ast)<0$
 as
$(\frac{\sqrt{5}+3}{\sqrt{5}+1}) (\frac{\sqrt{5}+1}{2})^{k}\le (\frac{\sqrt{5}+3}{\sqrt{5}+1}) (\frac{\sqrt{5}+1}{2})^{-2}\approx 0.618$ and $2 (\frac{2}{\sqrt{5}+1})^{n_2+1}\le 2(\frac{2}{\sqrt{5}+1})^{4}\approx 0.2918.$
If $k\ge-1,$ then 
 $f(k,n_2)>0$ and $\phi_{\dot{\mathcal{A}}_2^{n_2+k,n_2,2n_2+k+1}}(\lambda^\ast)>0$ as $(\frac{\sqrt{5}+3}{\sqrt{5}+1}) (\frac{\sqrt{5}+1}{2})^{k}\ge (\frac{\sqrt{5}+3}{\sqrt{5}+1}) (\frac{\sqrt{5}+1}{2})^{-1}=1.$
  So, $\dot{\mathcal{A}}_2^{n_1,n_2,n_3}\subset (\dot{Q}^\prime_{n_1,n_1},v_{n_1},2,v_{n_2-1},T_{n_2,1,n_2-1}),$ where $n_1\ge n_2-1.$
   We complete the proof of $(2).$

($3$) Let $n_1=n_2+k$ where $k\ge 0.$ By forbidding $Q_{2,b,2}$ (where $b\le 3$), we have $n_3\ge 3.$

\textbf{Case 1.} $n_1=n_2=0.$  By computations, we have $\phi_{\dot{\mathcal{A}}_3^{0,0,3}}(\lambda^\ast)<0$ and
 $\phi_{\dot{\mathcal{A}}_3^{0,0,4}}(\lambda^\ast)>0.$
Then $n_3\ge 4$. So $\dot{\mathcal{A}}_3^{0,0,4}\sim (\dot{Q}^\prime_{1,0},v_{1},2,v_1,\dot{Q}^\prime_{1,0})\subset 
(\dot{Q}^\prime_{1,1},v_{1},2,v_1,\dot{Q}^\prime_{1,0})$ (if $n_3=4$) or 
$\dot{\mathcal{A}}_3^{0,0,n_3}\subset [\dot{Q}^\prime_{1,0},v_{1},s,v_1,\dot{Q}^\prime_{1,0}]\subset [\dot{Q}^\prime_{1,1},v_{1},s,v_1,\dot{Q}^\prime_{1,1}]$ where $s\ge 3$ (if $n_3\ge 5$).

\textbf{Case 2.} $n_1=1.$
 
\textbf{Subcase 2.1.}  $n_2=0.$ By computations, we have $\phi_{\dot{\mathcal{A}}_3^{1,0,3}}(\lambda^\ast)<0$ and $\phi_{\dot{\mathcal{A}}_3^{1,0,4}}(\lambda^\ast)>0.$
Then $n_3\ge 4$. So $\dot{\mathcal{A}}_3^{1,0,4}\sim (\dot{Q}^\prime_{1,1},v_{1},2,v_1,\dot{Q}^\prime_{1,0})$ (if $n_3=4$) or 
$\dot{\mathcal{A}}_3^{1,0,n_3}\subset [\dot{Q}^\prime_{1,1},v_{1},s,v_1,\dot{Q}^\prime_{1,0}]\subset [\dot{Q}^\prime_{1,1},v_{1},s,v_1,\dot{Q}^\prime_{1,1}]$ where $s\ge 3$ (if $n_3\ge 5$).

\textbf{Subcase 2.2.}  $n_2=1.$ By computations, we have $\phi_{\dot{\mathcal{A}}_3^{1,1,3}}(\lambda^\ast)<0$ and $\phi_{\dot{\mathcal{A}}_3^{1,1,4}}(\lambda^\ast)<0.$ Then $n_3\ge 5.$ 
So 
$\dot{\mathcal{A}}_3^{1,1,n_3}\subset [\dot{Q}^\prime_{1,1},v_{1},s,v_1,\dot{Q}^\prime_{1,1}]$ where $s\ge 3$.

\textbf{Case 3.} $n_1=2.$
 
 \textbf{Subcase 3.1.}  $n_2=0.$ By computations, we have $\phi_{\dot{\mathcal{A}}_3^{2,0,3}}(\lambda^\ast)<0,$ $\phi_{\dot{\mathcal{A}}_3^{2,0,4}}(\lambda^\ast)<0$  and
 $\phi_{\dot{\mathcal{A}}_3^{2,0,5}}(\lambda^\ast)>0.$
Then $n_3\ge 5.$
 So $\dot{\mathcal{A}}_3^{2,0,5}\sim (\dot{Q}^\prime_{2,2},v_{2},2,v_1,\dot{Q}^\prime_{1,0})$ (if $n_3=5$) or 
$\dot{\mathcal{A}}_3^{2,0,n_3}\subset [\dot{Q}^\prime_{2,2},v_{2},s,v_1,\dot{Q}^\prime_{1,0}]\subset [\dot{Q}^\prime_{2,2},v_{2},s,v_1,\dot{Q}^\prime_{1,1}]$ where $s\ge 3$ (if $n_3\ge 6$).

  \textbf{Subcase 3.2.}  $1\le n_2\le 2.$ 
 By computations, if $n_3\le n_1+n_2+2,$ then $\phi_{\dot{\mathcal{A}}_3^{2,n_2,n_3}}(\lambda^\ast)<0.$
 Then  $n_3\ge n_1+n_2+3$. So $\dot{\mathcal{A}}_3^{2,n_2,n_3}\subset [\dot{Q}^\prime_{2,2},v_{2},s,v_{n_2},\dot{Q}^\prime_{n_2,n_2}]$ where $s\ge 3$.

\textbf{Case 4.} $n_1\ge 3.$

\textbf{Subcase 4.1.} $n_2=0.$ By forbidding $Q_{n_1+1,n_3+2,2}$ (where $n_3\le n_1+1$), we have $n_3\ge n_1+2.$
Since $\phi_{\dot{\mathcal{A}}_3^{n_1,0,n_1+2}}(\lambda^\ast)<0$ and 
$\phi_{\dot{\mathcal{A}}_3^{n_1,0,n_1+3}}(\lambda^\ast)>0$ (by Table 3  (4) and (5)),
  then  $n_3\ge n_1+3$.  So $\dot{\mathcal{A}}_3^{n_1,0,n_1+3}\sim (\dot{Q}^\prime_{n_1,n_1},v_{n_1},2,v_1,\dot{Q}^\prime_{1,0})$ (if $n_3=n_1+3$) or 
$\dot{\mathcal{A}}_3^{n_1,0,n_3}\subset [\dot{Q}^\prime_{n_1,n_1},v_{n_1},s,v_1,\dot{Q}^\prime_{1,0}]\subset [\dot{Q}^\prime_{n_1,n_1},v_{n_1},s,v_1,\dot{Q}^\prime_{1,1}]$ where $s\ge 3$ (if  $n_3\ge n_1+4$).

\textbf{Subcase 4.2.} $n_2=1.$ 
 By forbidding $Q_{n_1+1,n_3+3,2}$ (where $n_3\le n_1+2$), we have $n_3\ge n_1+3.$
Since $\phi_{\dot{\mathcal{A}}_3^{n_1,1,n_1+3}}(\lambda^\ast)<0$ and 
$\phi_{\dot{\mathcal{A}}_3^{n_1,1,n_1+4}}(\lambda^\ast)>0$ (by Table 3  (6) and (7)),
  then  $n_3\ge n_1+4$.  So 
$\dot{\mathcal{A}}_3^{n_1,1,n_3}\subset [\dot{Q}^\prime_{n_1,n_1},v_{n_1},s,v_1,\dot{Q}^\prime_{1,1}]$ where $s\ge 3$.

\textbf{Subcase 4.3.} $n_2\ge 2.$
If $n_1=3$ and $n_2=2,$ by forbidding $Q_{4,n_3+3,3}$ (where $n_3\le 4$), we have $n_3\ge 5.$
By computations, we have $\phi_{\dot{\mathcal{A}}_3^{3,2,5}}(\lambda^\ast)<0$,$\phi_{\dot{\mathcal{A}}_3^{3,2,6}}(\lambda^\ast)<0$  and
 $\phi_{\dot{\mathcal{A}}_3^{3,2,7}}(\lambda^\ast)>0.$
  Then $n_3\ge7.$
 So
 $\dot{\mathcal{A}}_3^{3,2,7}\sim (\dot{Q}^\prime_{3,3},v_{3},2,v_2,\dot{Q}^\prime_{2,2})$ (if $n_3=7$)
or 
$\dot{\mathcal{A}}_3^{3,2,n_3}\subset [\dot{Q}^\prime_{3,3},v_{3},s,v_2,\dot{Q}^\prime_{2,2}]$ where $s\ge 3$ (if $n_3\ge 8$).
Otherwise, either $n_1\ge 4$ and $n_2=2,$ or $n_1\ge 3$ and $n_2\ge 3.$
By forbidding $Q_{n_1+1,n_3+3,n_2+1}$ (where $n_3\le n_1+n_2$), we have $n_3\ge n_1+n_2+1.$
If $n_3=n_1+n_2+1$, then
$\phi_{\dot{\mathcal{A}}_3^{n_1,n_2,n_1+n_2+1}}(\lambda^\ast)=-2 \sqrt{2} (\sqrt{5}+1)^{-n_1-n_2-\frac{1}{2}} ((\sqrt{5}+1)^{n_1} 2^{n_2}+2^{n_1} (\sqrt{5}+1)^{n_2}+(3-\sqrt{5}) 2^{n_1+n_2})<0.$
Therefore, $n_3\ge n_1+n_2+2$.

If $n_3=n_1+n_2+2$, then
$\phi_{\dot{\mathcal{A}}_3^{n_1,n_2,n_1+n_2+2}}(\lambda^\ast)=(\sqrt{5}+1)^{-k-2 n_2-1}f(n_2,k)$, where  $f(n_2,k)=(\sqrt{5}+1)^{k+2 n_2+1}+(\sqrt{5}-3)2^{2n_2+k+2}-2^{n_2+2}(\sqrt{5}+1)^{n_2}((\sqrt{5}+1)^k+2^k).$
Since
\begin{equation*}
\begin{split}
f(n_2,k)&>(\sqrt{5}+1)^{k+2n_2+1}-(\sqrt{5}+1)^{n_2} 2^{k+n_2+2}-2^{n_2+2} (\sqrt{5}+1)^{k+n_2}-2^{k+2n_2+2}
\\
&>(\sqrt{5}+1)^{n_2} ((\sqrt{5}+1)^{k+n_2+1}-((\sqrt{5}+1)^k+2^{k+1}) 2^{n_2+2})=(\sqrt{5}+1)^{n_2}g(n_2,k),\end{split}
\end{equation*}
then $\phi_{\dot{\mathcal{A}}_3^{n_1,n_2,n_1+n_2+2}}(\lambda^\ast)>(\sqrt{5}+1)^{-k-n_2-1}g(n_2,k)$.
If $k=0,$ then $n_1=n_2\ge3$ and $g(n_2,0)=(\sqrt{5}+1)^{n_2+1}-3\cdot 2^{n_2+2}>0$.
If $k\ge 1,$ then $g(n_2,k)>(\sqrt{5}+1)^{k}((\sqrt{5}+1)^{n_2+1}- 2^{n_2+3})>0$ for $n_2\ge2.$
So, $\phi_{\dot{\mathcal{A}}_3^{n_1,n_2,n_1+n_2+2}}(\lambda^\ast)>0$.

Hence,   $\dot{\mathcal{A}}_3^{n_1,n_2,n_1+n_2+2}\sim (\dot{Q}^\prime_{n_1,n_1},v_{n_1},2,v_{n_2},\dot{Q}^\prime_{n_2,n_2})$  (if $n_3=n_1+n_2+2$)  or $\dot{\mathcal{A}}_3^{n_1,n_2,n_3}\subset [\dot{Q}^\prime_{n_1,n_1},v_{n_1},s,v_{n_2},\dot{Q}^\prime_{n_2,n_2}]$ where $s\ge 3$  (if $n_3\ge n_1+n_2+3$).
We complete the proof of $(3).$

The proofs of $(4)-(11)$ are similar, and all values  $\phi_{\dot{\mathcal{A}}_i^{n_1,n_2}}(\lambda^\ast)$ ($i=4,\dots,13$) are presented in Table 3, where $a_3=\sqrt{\sqrt{5}-1},
a_4=\sqrt{17 \sqrt{5}+22},a_5= \sqrt{73 \sqrt{5}+151},$
$a_6=\sqrt{17 \sqrt{5}-22},$ $a_7=\sqrt{5 \sqrt{5}-11},a_8=\sqrt{73 \sqrt{5}-151},a_9=\sqrt{5 \sqrt{5}-11}, a_{10}=\sqrt{185 \sqrt{5}-409},$
$a_{11}=\frac{2}{\sqrt{13 \sqrt{5}+29}},a_{12}=\frac{31}{\sqrt{337 \sqrt{5}+751}}.$

(12)  If  $n_1\ge 3,$
by forbidding $Q_{1,n_2+1,n_1}$ (where $n_2\le n_1-2$),  then $n_2\ge n_1-1$.
So $\dot{\mathcal{A}}_{14}^{n_1,n_2}\subset [P_4,v_2,s]$ (if  $n_1\le 2$) or $\dot{\mathcal{A}}_{14}^{n_1,n_2}\subset [T_{n_1,1,n_1-1},v_{n_1-1},s]$ (if  $n_1\ge 3$), where $s\ge 3.$

(13) By computations, we have $\phi_{\dot{\mathcal{A}}_{16}^{n_1}}(\lambda^\ast)<0$ 
and $\phi_{\dot{\mathcal{A}}_{17}^{n_1}}(\lambda^\ast)<0$
for $n_1\le 8$ and $\phi_{\dot{\mathcal{A}}_{15}^{n_1}}(\lambda^\ast)<0,$ $\phi_{\dot{\mathcal{A}}_{18}^{n_1}}(\lambda^\ast)<0$ 
and $\phi_{\dot{\mathcal{A}}_{19}^{n_1}}(\lambda^\ast)<0$ for $n_1\le 10.$
Therefore, 
$\dot{\mathcal{A}}_{15}^{n_1}$ 
is an induced subgraph of $[\dot{G}_5^{10},v_{10},s,v_{10},\dot{G}_5^{10}],$
$\dot{\mathcal{A}}_i^{n_1}$ ($i=16,17$) 
is an induced subgraph of  $[\dot{G}_2^9,v_{9},s,v_{10},\dot{G}_5^{10}],$
$\dot{\mathcal{A}}_i^{n_1}$ ($i=18,19$) 
is an induced subgraph of  $[\dot{G}_4^{12},v_{12},s,v_{10},\dot{G}_5^{10}].$
\qed
\begin{table}[h]
\renewcommand\arraystretch{1.25}
\begin{center}
\caption{$\phi_{\dot{G}}(x)$ and $\phi_{\dot{G}}(\lambda^\ast)$ where $\dot{G}$ belongs to  Theorem \ref{t2.4} ($iii$) and $(iv)$}
\begin{tabularx}{15cm}{llX}  % 10cm pÈ¥Ç°ÉÚÎ»¶Èáá£¬Ê£ÏÂµÄÍšÍšœo
\hline                      % µÚÈýÚÎ»Ê¹ÓÃ£¬ÎÄ×Ö³¬³öµÄ²¿·Ýþ×ÔÓÕÛÐÐ
&$\phi_{\dot{Q}_{n_1,n_2}}(x)=xp_{n_1+n_2+3}-p_{n_1+2}p_{n_2}-p_{n_1}p_{n_2+2}+2p_{n_1}p_{n_2}$\\

&$\phi_{\dot{Q}^\prime_{n_1,n_2}}(x)=x\phi_{\dot{Q}_{n_1,n_2}}(x)-p_{n_1+n_2+3}$\\

&$\phi_{\dot{\mathcal{C}}_k^{1,\frac{k}{2}+1}}(x)=x(x(p_{k}-p_{k-2}+2)-p_{k-1})-\phi_{T_{\frac{k}{2}-1,1,\frac{k}{2}-1}}(x)$\\

 &$\phi_{\dot{\mathcal{C}}_k^{1,\frac{k}{2}+1}}(\lambda^\ast)=(\sqrt{5}+3) 2^{\frac{k}{2}} (\sqrt{5}+1)^{1-\frac{k}{2}}>0$\\

 &
$\phi_{\dot{C}_4[n_1,1,n_3,1]}(x)=x\phi_{\dot{Q}^\prime_{n_1,n_3}}(x)-\phi_{T_{n_1+1,1,n_3+1}}(x)$ \\

&
$\phi_{\dot{C}_4[n_1,1,n_3,1]}(\lambda^\ast)=
2^{\frac{1}{2} (n_1+n_3+2)} (\sqrt{5}+1)^{-\frac{n_1}{2}-\frac{n_3}{2}+1}
>0$ \\

&$\phi_{\dot{U}_6^{n_1,n_2}}(x)=x\phi_{T_{n_2,1,n_1+3}}(x)-p_{n_1+n_2+4}-\phi_{T_{n_2,1,2}}(x)p_{n_1}+2p_{n_1}p_{n_2}$\\

&$\phi_{\dot{U}_6^{n_1,n_2}}(\lambda^\ast)=2^{\frac{1}{2} (n_1+n_2+2)} (\sqrt{5}+1)^{-\frac{n_1}{2}-\frac{n_2}{2}+1}>0$\\

&$\phi_{\dot{G}_0^k}(x)=x^2q_k-(q_k+2xp_{k-1})+2x^2+2xp_{k-3}$\\

&$\phi_{\dot{G}_0^k}(\lambda^\ast)=
2 (1-(\frac{2}{\sqrt{5}+1})^{\frac{k}{2}})
>0$  $(k$  is even) \\

&$\phi_{\dot{S}_1^n}(x)=x p_{n-8}\phi_{\dot{S}_1^7}(x)-q_6p_{n-8}-p_{n-9} \phi_{\dot{S}_1^7}(x)$\\

&$\phi_{\dot{S}_1^n}(\lambda^\ast)=
(5 \sqrt{5}+11)^{\frac{1}{2}} 2^{\frac{n}{2}+2} (\sqrt{5}+1)^{\frac{1}{2} (-n-1)}
>0$\\

&$\phi_{\dot{S}_2^n}(x)=x p_{n-11} \phi_{\dot{S}_2^{10}}(x) -p_{n-11}  \phi_{\dot{S}_2^9}(x)
-p_{n-12} \phi_{\dot{S}_2^{10}}(x)$\\

& $\phi_{\dot{S}_2^n}(\lambda^\ast)=
2^{\frac{n}{2}+2} (\sqrt{5}+1)^{-\frac{n}{2}}
>0$\\

&$\phi_{[\dot{G}_{4}^{12},v_{12},3,v_{n_1},\dot{Q}_{n_1,n_1}^{\prime}]}(\lambda^\ast)=\frac{ (3194 \sqrt{5}-7142) (\sqrt{5}+1)^{n_1}+(1292 \sqrt{5}-2889) 2^{n_1+3}}{(\sqrt{5}+1)^{n_1+1}(\sqrt{5}-2)^{5/2}}>0$\\

&$\phi_{[\dot{G}_{2}^{9},v_9,3,v_{n_1},\dot{Q}_{n_1,n_1}^{\prime}]}(\lambda^\ast)=\frac{4((9-4 \sqrt{5}) (\sqrt{5}+1)^{n_1}+(29-13 \sqrt{5}) 2^{n_1})}{ (\sqrt{5}+1)^{n_1+1}}
>0$\\

 &$\phi_{[\dot{G}_{5}^{10},v_{10},3,v_{n_1},\dot{Q}_{n_1,n_1}^{\prime}]}(\lambda^\ast)=\frac{ 2 (\sqrt{5}+1)^{n_1+1}-2^{n_1+3}}{(305 \sqrt{5}+682)^{1/2}(\sqrt{5}+1)^{n_1+1}}>0$\\

 &$\phi_{[\dot{Q}^\prime_{n_1, n_1},v_{n_1},3,v_{n_2},\dot{Q}_{n_2,n_2}^{\prime}]}(\lambda^\ast)=\frac{8((\sqrt{5}+1)^{n_1+n_2+\frac{1}{2}}-a_1((\sqrt{5}+1)^{n_1} 2^{n_2}+2^{n_1} (\sqrt{5}+1)^{n_2})+a_2 2^{n_1+n_2+\frac{5}{2}})}{ (\sqrt{5}+1)^{n_1+n_2+\frac{7}{2}}}$\\

 &$\phi_{[T_{a,1, a-1},v_{a},3,v_{n_1},\dot{Q}_{n_1,n_1}^{\prime}]}(\lambda^\ast)=2^{a+2} ((\sqrt{5}+1)^{n_1}-(\sqrt{5}-1) 2^{n_1}) (\sqrt{5}+1)^{-a-n_1-1}$\\

 &$\phi_{[P_{4},v_{2},3,v_{n_1},\dot{Q}_{n_1,n_1}^{\prime}]}(\lambda^\ast)=\frac{(2 (3-\sqrt{5}) (\sqrt{5}+1)^{n_1}-(\sqrt{5}-2) 2^{n_1+3})}{ (\sqrt{5}+1)^{n_1+1}}
 >0$\\

&$\phi_{[\dot{G}_{4}^{12},v_{12},3,v_{a-1},T_{a,1,a-1}]}(\lambda^\ast)\approx0.0669(\frac{2}{\sqrt{5}+1})^{a}>0$\\

&$\phi_{[\dot{G}_{2}^{9},v_9,3,v_{a-1},T_{a,1,a-1}]}(\lambda^\ast)=(5 \sqrt{5}-11) 2^{n_1+1} (\sqrt{5}+1)^{-a}>0$\\

 &$\phi_{[\dot{G}_{5}^{10},v_{10},3,v_{a-1},T_{a,1,a-1}]}(\lambda^\ast)\approx0.28(\frac{2}{\sqrt{5}+1})^{a}>0$\\

 &$\phi_{[T_{b,1, b-1},v_{b-1},3,v_{a-1},T_{a,1,a-1}]}(\lambda^\ast)=2^{a+b+1} (\sqrt{5}+1)^{-a-b+1}>0$\\

 &$\phi_{[P_{4},v_{2},3,v_{a-1},T_{a,1,a-1}]}(\lambda^\ast)=2^{a+3} (\sqrt{5}+1)^{-a-1}>0$\\

\hline
\end{tabularx}

\end{center}
\end{table}

\begin{table}
\caption{$\phi_{\dot{G}}(\lambda^\ast)$ where $\dot{G}$ belongs to  Theorem \ref{t2.4} $(iv)$}
\label{verilog}
  \centering

\begin{tabular}{l c c}
\hline
  $\phi_{[\dot{G}_{4}^{12},v_{12},3,v_{12},\dot{G}_{4}^{12}]}(\lambda^\ast)\approx 0.0007$ &  $\phi_{[\dot{G}_{2}^{9},v_{9},3,v_{9},\dot{G}_{2}^{9}]}(\lambda^\ast)\approx 0.02$&
  $\phi_{[\dot{G}_{5}^{10},v_{10},3,v_{2},P_{4}]}(\lambda^\ast)\approx 0.03$ \\

      $\phi_{[\dot{G}_{4}^{12},v_{12},3,v_{9},\dot{G}_{2}^{9}]}(\lambda^\ast)\approx 0.004$ &$\phi_{[\dot{G}_{2}^{9},v_{9},3,v_{10},\dot{G}_{5}^{10}]}(\lambda^\ast)\approx 0.015$&$\phi_{[P_4,v_{2},3,v_{2}, P_4]}(\lambda^\ast)\approx 0.05$
 \\

$\phi_{[\dot{G}_{4}^{12},v_{12},3,v_{10},\dot{G}_{5}^{10}]}(\lambda^\ast)\approx 0.003$ & $\phi_{[\dot{G}_{2}^{9},v_{9},3,v_{2},P_{4}]}(\lambda^\ast)\approx 0.033$&\\

$\phi_{[\dot{G}_{4}^{12},v_{12},3,v_{2},P_{4}]}(\lambda^\ast)\approx 0.006$ & $\phi_{[\dot{G}_{5}^{10},v_{10},3,v_{10},\dot{G}_{5}^{10}]}(\lambda^\ast)\approx 0.01$ &\\

\hline
\end{tabular}
\end{table}

\begin{table}[h]
\renewcommand\arraystretch{1.5}
\begin{center}
\caption{$\phi_{\dot{G}}(\lambda^\ast)$ where $\dot{G}$ belongs to Lemma \ref{l3.5}}
\begin{tabularx}{15cm}{llX}  % 10cm pÈ¥Ç°ÉÚÎ»¶Èáá£¬Ê£ÏÂµÄÍšÍšœo
\hline                      % µÚÈýÚÎ»Ê¹ÓÃ£¬ÎÄ×Ö³¬³öµÄ²¿·Ýþ×ÔÓÕÛÐÐ

 & $(1)$
$\phi_{\dot{\mathcal{A}}_{2}^{n_1,1,n_1-1}}(\lambda^\ast)=-(\sqrt{5}-1)^{\frac{1}{2}} 2^{n_1+\frac{1}{2}} \left(\sqrt{5}+1\right)^{-n_1}<0$ \\

&$(2)$ $\phi_{\dot{\mathcal{A}}_{2}^{n_1,2,n_1+2}}(\lambda^\ast)=2 \sqrt{2} (\sqrt{5}+1)^{-n_1-\frac{3}{2}} ((\sqrt{5}+1)^{n_1}-\sqrt{5}\cdot 2^{n_1+1})>0$ $(n_1\ge 4)$\\

&$(3)$ $\phi_{\dot{\mathcal{A}}_{2}^{0,n_2,n_2+2}}(\lambda^\ast)=
\frac{(\sqrt{5}-1)^{\frac{1}{2}}2^{n_2+\frac{3}{2}}}{(\sqrt{5}+1)^{n_2}}
 +3(\sqrt{5}+2)^{\frac{1}{2}}-(2 (\sqrt{5}+1))^{\frac{1}{2}}>0$\\

&$(4)$ $\phi_{\dot{\mathcal{A}}_{3}^{n_1,0,n_1+2}}(\lambda^\ast)=-(2 \sqrt{5}+3) 2^{n_1+5} (\sqrt{5}+1)^{-n_1-4}-\sqrt{5}+2<0$ $(n_1\ge 3)$\\

&$(5)$ $\phi_{\dot{\mathcal{A}}_{3}^{n_1,0,n_1+3}}(\lambda^\ast)=(\sqrt{5}+1)^{-n_1-\frac{3}{2}} (2 \sqrt{10} (\sqrt{5}+1)^{n_1}+(\sqrt{5}-4) 2^{n_1+\frac{5}{2}})>0$\\

&$(6)$ $\phi_{\dot{\mathcal{A}}_{3}^{n_1,1,n_1+3}}(\lambda^\ast)=(5 \sqrt{5}-13) (\frac{2}{\sqrt{5}+1})^{n_1}+\sqrt{5}-2<0$\\

&$(7)$ $\phi_{\dot{\mathcal{A}}_{3}^{n_1,1,n_1+4}}(\lambda^\ast)=(\sqrt{5}-7) (\frac{1}{2} (\sqrt{5}+1))^{-n_1-\frac{5}{2}}+3 \sqrt{\sqrt{5}-2}>0$\\

&$(8)$ $\phi_{\dot{\mathcal{A}}_{4}^{n_1,n_2}}(\lambda^\ast)=\frac{a_3\cdot 2^{n_1} (\sqrt{5}+1)^{n_2}+a_4 \cdot 2^{n_1+n_2+\frac{3}{2}}-a_5 (\sqrt{5}+1)^{n_1} 2^{n_2}}{
2^{\frac{1}{2} (n_1+n_2+2)} (\sqrt{5}+1)^{\frac{1}{2} (n_1+n_2-1)}}
$\\

&$(9)$ $\phi_{\dot{\mathcal{A}}_5^{n_1,n_2}}(\lambda^\ast)=\frac{ ((4 \sqrt{5}+7) (\sqrt{5}+1)^{n_1}+(5 \sqrt{5}+1) 2^{n_1}) 2^{n_2+8}-2^{n_1+8} (\sqrt{5}+1)^{n_2}}{(\sqrt{5}-3)(2 (\sqrt{5}+1))^{\frac{1}{2} (n_1+n_2+7)}}$\\

&$(10)$ $\phi_{\dot{\mathcal{A}}_6^{n_1,n_2}}(\lambda^\ast)=\frac{ (\sqrt{5}-1) 2^{n_1+1} (\sqrt{5}+1)^{n_2}+3 ((\sqrt{5}-1) 2^{n_1}-(\sqrt{5}+1)^{n_1}) 2^{n_2+2}}{(\sqrt{5}-1)(2 (\sqrt{5}+1))^{\frac{1}{2} (n_1+n_2)}}$\\

&$(11)$ $\phi_{\dot{\mathcal{A}}_7^{n_1,n_2}}(\lambda^\ast)=\frac{(\sqrt{5}-1) 2^{n_1+4} (\sqrt{5}+1)^{n_2}+3 ((\sqrt{5}-3) 2^{n_1}-(\sqrt{5}+1)^{n_1}) 2^{n_2+5}}{(\sqrt{5}-1)(2 (\sqrt{5}+1))^{\frac{1}{2} (n_1+n_2+3)} }$
\\

&$(12)$ $\phi_{\dot{\mathcal{A}}_8^{n_1,n_2}}(\lambda^\ast)=\frac{ 3 (\sqrt{5}+1)^{n_1+1} 2^{n_2+5}-2^{n_1+6} (\sqrt{5}+1)^{n_2}-3 \cdot 2^{n_1+n_2+7}}{(\sqrt{5}-3)(2 (\sqrt{5}+1))^{\frac{1}{2} (n_1+n_2+5)}}$\\

&$(13)$ $\phi_{\dot{\mathcal{A}}_9^{n_1,n_2}}(\lambda^\ast)=\frac {a_3\cdot 2^{n_1} (\sqrt{5}+1)^{n_2}-3 ((\sqrt{5}+1)^{n_1+\frac{1}{2}}+(\sqrt{\sqrt{5}-2}) 2^{n_1+\frac{3}{2}}) 2^{n_2}}{2^{\frac{n_1}{2}+\frac{n_2}{2}-1} (\sqrt{5}+1)^{\frac{1}{2} (n_1+n_2+3)}}$\\

&$(14)$  $\phi_{\dot{\mathcal{A}}_{10}^{n_1,n_2}}(\lambda^\ast)=-\frac{ a_6 (\sqrt{5}+1)^{n_1} (-2^{n_2+1})+a_7\cdot 2^{n_1+\frac{1}{2}} (\sqrt{5}+1)^{n_2}+a_8\cdot 2^{n_1+n_2+\frac{3}{2}}}{(\sqrt{5}-3)(2 (\sqrt{5}+1))^{\frac{1}{2} (n_1+n_2)} }$\\

&$(15)$  $\phi_{\dot{\mathcal{A}}_{11}^{n_1,n_2}}(\lambda^\ast)=\frac{a_6 (\sqrt{5}+1)^{n_1} 2^{n_2+4}-a_9 \cdot 2^{n_1+\frac{7}{2}} (\sqrt{5}+1)^{n_2}+a_{10}\cdot2^{n_1+n_2+\frac{9}{2}}}{(\sqrt{5}-3)(2 (\sqrt{5}+1))^{\frac{1}{2} (n_1+n_2+3)}}$\\

&$(16)$  $\phi_{\dot{\mathcal{A}}_{12}^{n_1,n_2}}(\lambda^\ast)=\frac{(\sqrt{5}-2)((4 \sqrt{5}+7) (\sqrt{5}+1)^{n_1} (-2^{n_2})+2^{n_1} (\sqrt{5}+1)^{n_2}+(3 \sqrt{5}+13) 2^{n_1+n_2})}{ (2 (\sqrt{5}+1))^{\frac{1}{2} (n_1+n_2)} }$\\

&$(17)$  $\phi_{\dot{\mathcal{A}}_{13}^{n_1,n_2}}(\lambda^\ast)=\frac{a_{11}\cdot 2 (2^{n_1} (\sqrt{5}+1)^{n_2}-(4 \sqrt{5}+7) (\sqrt{5}+1)^{n_1} 2^{n_2})-a_{12}\cdot  2^{n_1+n_2+2}}{2^{\frac{1}{2} (n_1+n_2-1)} (\sqrt{5}+1)^{\frac{1}{2} (n_1+n_2+2)}}$\\
\hline
\end{tabularx}
\end{center}
\end{table}

\end{CJK*}
\end{document}